\newtheorem{Prop}{Proposition}[section]%
\newtheorem{Conj}[Prop]{Conjecture}%
\newtheorem{TheoEnglish}[Prop]{Theorem}%
\newtheorem{DefEnglish}[Prop]{Definition}
\newtheorem{CorEnglish}[Prop]{Corollary}
\newtheorem{HypEnglish}[Prop]{Assumption}%
\newcommand{\eqdef}{\overset{\operatorname{def}}{=}}
\newcommand{\A}{\mathbb A}%
\newcommand{\C}{\mathbb C}%
\newcommand{\Fp}{\mathbb F}%
\newcommand{\G}{\mathbf G}%
\newcommand{\kbold}{\mathbf k}
\newcommand{\Q}{\mathbb Q}%
\newcommand{\qp}{\mathbb Q_{p}}%
\newcommand{\R}{\mathbb R}%
\newcommand{\Z}{\mathbb Z}%
\newcommand{\zp}{\mathbb Z_{p}}%
\newcommand{\Acal}{\mathcal A}%
\newcommand{\Bcal}{\mathcal B}
\newcommand{\Ccal}{\mathcal C}%
\newcommand{\Fcal}{\mathcal F}%
\newcommand{\Lcal}{\mathcal L}%
\newcommand{\Ncal}{\mathcal N}%
\newcommand{\Ocal}{\mathcal O}%
\newcommand{\Kcal}{\mathcal K}%
\newcommand{\Rcal}{\mathcal R}%
\newcommand{\Wcal}{\mathcal W}%
\newcommand{\Zcal}{\mathcal Z}%
\newcommand{\Fcali}{\mathscr F}%
\newcommand{\Lcali}{\mathscr L}%
\newcommand{\Xcali}{\mathscr X}%
\newcommand{\Ycali}{\mathscr Y}%
\newcommand{\aid}{\mathfrak a}%
\newcommand{\bid}{\mathfrak b}%
\newcommand{\cid}{\mathfrak c}%
\newcommand{\qid}{\mathfrak q}%
\newcommand{\pid}{\mathfrak p}%
\newcommand{\mgot}{\mathfrak m}%
\newcommand{\zid}{\mathfrak z}%
\newcommand{\Lambdaf}{\mathbf{\Lambda}}
\newcommand{\Tbar}{\overline{T}}%
\newcommand{\GL}{\operatorname{GL}}%
\newcommand{\GSp}{\operatorname{GSp}}%
\newcommand{\et}{\operatorname{et}}
\newcommand{\produit}[2]{\underset{#1}{\overset{#2}\prod}}%
\newcommand{\produittenseur}[2]{\underset{#1}{\overset{#2}\bigotimes}}%
\newcommand{\sommedirecte}[2]{\underset{#1}{\overset{#2}\bigoplus}}%
\newcommand{\suiteexacte}[5]{0\fleche#3\overset{#1}{\fleche}#4\overset{#2}{\fleche}#5\fleche0}
\newcommand{\cardinal}[1]{|#1|}
\newcommand{\limproj}[1]{\underset{\underset{#1}\longleftarrow}\lim}
\newcommand{\liminj}[1]{\underset{\underset{#1}\longrightarrow}\lim}
\newcommand{\Hom}{\operatorname{Hom}}%
\newcommand{\cor}{\operatorname{Cor}}
\newcommand{\isom}{\overset{\sim}{\longrightarrow}}
\newcommand{\plonge}{\hookrightarrow}
\newcommand{\matrice}[4]{\begin{pmatrix}#1&#2\\ #3&#4\end{pmatrix}}
\newcommand{\rank}{\operatorname{rank}}%
\newcommand{\ord}{\operatorname{ord}}
\newcommand{\tenseur}{\otimes}
\newcommand{\Ltenseur}{\overset{\operatorname{L}}{\tenseur}}
\newcommand{\modulo}{\operatorname{ mod }}
\newcommand{\Spec}{\operatorname{Spec}}
\newcommand{\Id}{\operatorname{Id}}%
\newcommand{\Aut}{\operatorname{Aut}}%
\newcommand{\End}{\operatorname{End}}%
\newcommand{\Frac}{\operatorname{Frac}}%
\newcommand{\Tate}{\operatorname{Ta}}%
\newcommand{\Ext}{\operatorname{Ext}}%
\newcommand{\nInd}{\operatorname{n-Ind}}%
\newcommand{\fleche}{\longrightarrow}%
\newcommand{\croix}{^{\times}}%
\newcommand{\surjection}{\twoheadrightarrow}%
\newcommand{\rhobar}{\bar{\rho}}%
\newcommand{\dR}{\operatorname{dR}}%
\newcommand{\Fil}{\operatorname{Fil}}
\newcommand{\Gr}{\operatorname{Gr}}
\newcommand{\Deltatilde}{\tilde{\Delta}}
\newcommand{\Hun}{H^{1}}
\newcommand{\Htilde}{\tilde{H}}
\newcommand{\Htildetilde}{\tilde{\tilde{H}}}
\newcommand{\RGamma}{\operatorname{R}\Gamma}%
\newcommand{\Det}{\operatorname{{D}et}}%
\newcommand{\tr}{\operatorname{tr}}
\newcommand{\Fr}{\operatorname{Fr}}%
\newcommand{\Gal}{\operatorname{Gal}}
\newcommand{\Qbar}{\bar{\Q}}%
\newcommand{\Fbar}{\bar{F}}%
\newcommand{\Kbar}{\bar{K}}%
\newcommand{\Dbar}{\bar{D}}
\newcommand{\kbar}{\bar{k}}%
\newcommand{\s}{\sigma}%
\newcommand{\hgot}{\mathfrak h}%
\newcommand{\Hecke}{\mathbf{T}}%
\newcommand{\Hs}{\Hecke^{\Sigma}}%
\newcommand{\Hsm}{\Hs_{\mgot}}
\newcommand{\Hsmo}{\Hecke^{\Sigma_{0}}_{\mgot}}
\newcommand{\Hsmord}{\Hecke^{\Sigma,\ord}_{\mgot}}
\newcommand{\Raid}{R(\aid)}
\newcommand{\Eul}{\operatorname{Eul}}
\newcommand{\Ssplit}{\Sigma^{\operatorname{split}}}
\newcommand{\Nekovar}{Nekov\'a\v{r}}%
\newcommand{\cl}{\operatorname{cl}}
\newcommand{\mot}{\operatorname{mot}}
\newcommand{\aut}{\operatorname{aut}}
\numberwithin{equation}{subsection}%
\date{}
\newcommand{\subtitle}[1]{%
  \posttitle{%
    \par\end{center}
    \begin{center}\large#1\end{center}
    \vskip0.5em}%
}
\begin{document}%
\title{$p$-adic properties of motivic fundamental lines}
\subtitle{Kato's general conjecture is (probably) false for (not so) trivial reasons}
\author{Olivier Fouquet}%
\maketitle
\begin{abstract}
We prove the conjectured compatibility of $p$-adic fundamental lines with specializations at motivic points for a wide class of $p$-adic families of $p$-adic Galois representations (for instance, the families which arise from $p$-adic families of automorphic representations of the unit group of a quaternion algebra or of a totally definite unitary groups) and deduce the compatibility of the Equivariant Tamagawa Number Conjectures for them. However, we also show that fundamental lines are not compatible with arbitrary characteristic zero specializations with values in a domain in general. This points to the need to modify the conjectures of \cite{KatoViaBdR} using completed cohomology.
\end{abstract}
%
\selectlanguage{english}%

\newcommand{\hord}{\mathfrak h^{\ord}}%
\newcommand{\hdual}{\mathfrak h^{dual}}%
\newcommand{\matricetype}{\begin{pmatrix}\ a&b\\ c&d\end{pmatrix}}%
\newcommand{\Iw}{\operatorname{Iw}}%
\newcommand{\Hi}{\operatorname{Hi}}
\newcommand{\cyc}{\operatorname{cyc}}
\newcommand{\ab}{\operatorname{ab}}
\newcommand{\can}{\operatorname{can}}%
\newcommand{\Fitt}{\operatorname{Fitt}}%
\newcommand{\Tiwa}{\mathcal T_{\operatorname{Iw}}}%
\newcommand{\Af}{\operatorname{A}}%
\newcommand{\Dunzero}{D_{1,0}}%
\newcommand{\Uun}{U_{1}}%
\newcommand{\Uzero}{U_{0}}%
\newcommand{\Uundual}{U^{1}}%
\newcommand{\Uunun}{U^{1}_{1}}%
\newcommand{\Wdual}{\Wcal^{dual}}
\newcommand{\JunNps}{J_{1,0}(\Ncal, P^{s})}%
\newcommand{\Tatepord}{\Tate_{\pid}^{ord}}%
\newcommand{\Kum}{\operatorname{Kum}}%
\newcommand{\zcid}{z(\cid)}%
\newcommand{\kgtilde}{\tilde{\kappa}}%
\newcommand{\kiwa}{\varkappa}%
\newcommand{\kiwatilde}{\tilde{\varkappa}}%
\newcommand{\Hbar}{\bar{H}}%
\newcommand{\Tred}{T/\mgot T}%
\newcommand{\Riwa}{R_{\operatorname{Iw}}}%
\newcommand{\Kiwa}{\Kcal_{\operatorname{Iw}}}%
\newcommand{\Sp}{\operatorname{Sp}}%
\newcommand{\St}{\operatorname{St}}%
\newcommand{\Aiwa}{\mathcal A_{\operatorname{Iw}}}%
\newcommand{\Viwa}{\mathcal V_{\operatorname{Iw}}}%
\newcommand{\pseudiso}{\overset{\centerdot}{\isom}}%
\newcommand{\pseudisom}{\overset{\approx}{\fleche}}%
\newcommand{\carac}{\operatorname{char}}%
\newcommand{\length}{\operatorname{length}}
\newcommand{\eord}{e^{\ord}}%
\newcommand{\eordm}{e^{\ord}_{\mgot}}%
\newcommand{\hordinfini}{\hord_{\infty}}%
\newcommand{\Mordinfini}{M^{\ord}_{\infty}}%
\newcommand{\hordm}{\hord_{\mgot}}%
\newcommand{\hminm}{\hgot^{min}_{\mgot}}%
\newcommand{\Mordm}{M^{\ord}_{\mgot}}%
\newcommand{\Mtwist}{M^{\tw}_{\mgot}}
\newcommand{\Xun}{X_{1}}%
\newcommand{\Xundual}{X^{1}}%
\newcommand{\Xunun}{X^{1}_{1}}%
\newcommand{\Xtw}{X^{\tw}}
\newcommand{\Inert}{\mathfrak{In}}%
\newcommand{\Tsp}{T_{\Sp}}%
\newcommand{\Asp}{A_{\Sp}}%
\newcommand{\Vsp}{V_{\Sp}}%
\newcommand{\SK}{\mathscr{S}}%
\newcommand{\Rord}{R^{\ord}}%
\newcommand{\per}{\operatorname{per}}
\newcommand{\z}{\mathbf{z}}
\newcommand{\zs}{\tilde{\mathbf{z}}}
\newcommand{\Ebarbar}{\bar{\bar{E}}}
\newcommand{\Grsym}{\mathfrak S}
\newcommand{\epsi}{\varepsilon}
\newcommand{\Fun}[2]{F^{{\mathbf{#1}}}_{#2}}
\newcommand{\tw}{\operatorname{tw}}
\newcommand{\ctf}{\operatorname{ctf}}
\newtheorem*{TheoA}{Theorem A}%
\newtheorem*{TheoB}{Theorem B}%
\newcommand{\isocan}{\overset{\can}{\simeq}}
\newcommand{\gen}{\operatorname{gen}}
  
   \tableofcontents

\section{Introduction}
Fix once and for all an odd prime $p$. The purpose of this manuscript is to draw attention towards two $p$-adic properties of the motivic fundamental lines which are believed to encode the special values of $L$-functions of motives as they are interpolated in $p$-adic families. More precisely, we prove first the general form of Mazur's Control Theorem (\cite{MazurRational,OchiaiControl}) conjectured in \cite[Conjecture 3.2.2 (i)]{KatoViaBdR}: the motivic fundamental line $\Delta_{\Sigma}(T/F)$ (see definitions \ref{DefFundamental} and \ref{DefFundamentalBis} for the notation) of a $p$-adic family of $\Gal(\Fbar/F)$-representations parametrized by the spectrum of a characteristic zero domain $\Lambda$ commutes with specialization at motivic points of $\Spec\Lambda[1/p]$ and, in fact, with the larger and fairly general class of specializations at pure monodromic point. However, we further show that as defined by K.Kato, $p$-adic families of motivic fundamental lines need not commute with arbitrary specializations nor even with arbitrary specializations with values in characteristic zero domains, perhaps contrary to what is suggested in \cite{KatoViaBdR}. When the $p$-adic family $T$ arises from a $p$-adic family of automorphic representations of a reductive group $\G$ over $F$ and when $\Lambda$ is a Hecke algebra, we give evidence suggesting that defining fundamental lines using the Weight-Monodromy filtration and the completed cohomology of \cite{EmertonCompleted} yields better behaved object. We propose a revised conjecture along these lines, show that it is compatible with arbitrary characteristic zero specialization and that it holds for $\GL_{2}$ under mild hypotheses.
\subsection{Motivic fundamental lines}
Motivic fundamental lines were originally introduced by A.Grothendieck and P.Deligne in \cite[Exposé III]{DixExposes} and \cite{SGA41/2} to explain the properties of the $L$-function of a $p$-adic étale sheaf $\Fcali$ on a separated scheme of finite type over $\Fp_{\ell}$ ($\ell\neq p$), in which case the fundamental line $\Delta(X,\Fcali)$ is the determinant of the étale cohomology of $X$ with coefficients in $\Fcali$. The counterparts over number fields of $X$ and $\Fcali$ are respectively $\Spec\Ocal_{F}[1/\Sigma]$ and the $p$-adic étale realization of a motive $M/F$ seen as $\Gal(\Fbar/F)$-representation where for concreteness the motive $M$ may be chosen to be the universal cohomology $h^{i}(Y)(j)$ attached to a smooth, proper scheme $Y/F$. After two decades of efforts the most important milestones of which are due to J-P.Serre, P.Deligne, A.Beilinson, S.Bloch, K.Kato, J-M.Fontaine and B.Perrin-Riou (see \cite{SerreFacteurs,DeligneFonctionsL,BlochCyclesLfunctionsI,BeilinsonConjecture,BlochCyclesLfunctionsII,BlochKato,FontainePerrinRiou} and references therein), a prediction of the $p$-adic valuation of the special value at zero of the $L$-function $L(M,s)$ of such a motive (say with coefficients in $\Q$) was achieved, albeit of course only conjecturally, in terms of a $p$-adic fundamental line
\begin{equation}\nonumber
\Delta(M_{\et,p})=\Det^{-1}_{\qp}\RGamma_{\et}(\Spec\Ocal_{F}[1/p],M_{\et,p})\tenseur_{\qp}\produittenseur{\s:F\plonge\C}{}\Det^{-1}_{\qp}M_{\et,p}(-1)^{+}
\end{equation}
and of a basis $\z(M_{\et,p})$,  called the $p$-adic zeta element of $M$, of a $\zp$-submodule of $\Delta(M_{\et,p})$.

Following the work of K.Iwasawa on $p$-adic interpolation of special values of $L$-functions of characters of $\Gal(\Qbar/\Q)$  (\cite{IwasawaOnPadic}) and that of B.Mazur and Y.Manin (\cite{MazurRational,ManinHecke,MazurValues}) on $p$-adic interpolation of the special values of $L$-functions attached to eigencuspforms and related algebraic invariants, it became clear that the formulation of the conjectures on special values of $L$-functions of motives should be supplemented by a description of the compatibilities of these values with congruences between motives and more generally of the variation of these values as the $p$-adic étale realization of the motive ranges over a $p$-adic analytic family of Galois representations (see \cite{MazurTheme} for general considerations). In terms of fundamental lines and zeta elements, this task amounts to the following questions. Suppose that $\Lambda$ is a complete, local, noetherian ring with residual characteristic $p$, that $T$ is a $\Lambda$-adic $\Gal(\Fbar/F)$-representation, that $\Ocal$ is the unit ball of a finite extension of $\qp$ and that $\psi:\Lambda\fleche\Ocal$ is a motivic specialization, that is to say a local $\zp$-algebras morphism such that $T_{\psi}\eqdef T\tenseur_{\Lambda,\psi}\Ocal$ is an $\Ocal$-lattice inside the $p$-adic étale realization of a motive $M$. Does there then exist a $\Lambda$-adic zeta element $\z_{\Lambda}(T)$ inside a $\Lambda$-adic fundamental line $\Delta_{\Lambda}(T)$ such that the commutative diagram of specializations 
\begin{equation}\nonumber
\xymatrix{
\Lambda\ar[rr]^{\psi}\ar[rd]_{\modulo\mgot_{\Lambda}}&&\Ocal\ar[ld]^{\modulo\mgot_{\Ocal}}\\
&k&
}
\end{equation}
yields a natural commutative diagram 
\begin{equation}\label{EqDiagComIntro}
\xymatrix{
&\Delta_{\Lambda}(T)\tenseur_{\Lambda,\psi}\Ocal\ar[r]^(0.6){\simeq}\ar[dd]&\Delta(T_{\psi})\ar[dd]^{}\\
\Delta_{\Lambda}(T)\ar[ru]\ar[rd]\\
&\Delta_{\Lambda}(T)\tenseur_{\Lambda}k\ar[r]^{\simeq}&\Delta(T\tenseur_{\Lambda}k).
}
\end{equation}
sending zeta elements to zeta elements? (This formal statement expresses the very concrete requirement that the $\Lambda$-adic fundamental line should $p$-adically interpolate the fundamental lines at motivic points.)

The first step towards this goal, namely the case of a motive $M/F$ endowed with a supplementary action of the Galois group $G$ of a finite abelian extension of $F$, was completed by K.Kato who conjectured in \cite{KatoHodgeIwasawa} that, in this setting, the motivic fundamental line of $M$ is endowed with an action of $G$ and that the period maps allowing the computation of the special values of $L(M,s)\in\C[G]^{\C}$ are $G$-equivariant. Taking the system of conjectures attached to the finite sub-extensions of a $\zp^{d}$-extension of $F$ recovers and generalizes the Iwasawa theory for motives with ordinary reduction of R.Greenberg (\cite{GreenbergIwasawaRepresentation,GreenbergIwasawaMotives}).

Soon thereafter, K.Kato proposed in \cite{KatoViaBdR} a much bolder and deeper conjecture, therein named the generalized Iwasawa main conjecture but nowadays more commonly called the Equivariant Tamagawa Number Conjecture, which defines fundamental lines and zeta elements of arbitrary $p$-adic families of Galois representations and states their properties. More precisely, let $\Lambda$ be a $p$-adic ring; for instance a complete, local, noetherian ring of mixed characteristic $(0,p)$. To any number field $F$, any finite subset of finite places $\Sigma\supset\{v|p\}$ of $F$ and any perfect complex of étale sheaves of $\Lambda$-modules $\Fcali$ on $\Spec\Ocal_{F}[1/\Sigma]$, the conjectures of \cite[Section 3.2]{KatoViaBdR} attach a projective $\Lambda$-module $\Delta(\Spec\Ocal_{F}[1/\Sigma],\Fcali)$ of rank 1 with a distinguished basis $\z(\Spec\Ocal_{F}[1/\Sigma],\Fcali)$ satisfying a number of properties; among which that $\z(\Spec\Ocal_{F}[1/\Sigma],\Fcali)$ computes the special value $L(M^{*}(1),0)$ if $\Fcali$ is the étale sheaf of $\Qbar_{p}$-vector spaces attached to the $p$-adic étale realization of a motive $M$ (see conjectures \ref{ConjTNC} and \ref{ConjETNC} below for a complete statement).
\subsection{Statement of results}
\subsubsection{Motivic points}One of the crucial predictions of \cite[Conjecture 3.2.2]{KatoViaBdR} is that if the fibre $\Fcali_{x}\eqdef\Fcali\tenseur_{\Lambda}\kbold(x)$ at $x\in\Spec\Lambda$ of $\Fcali$ is the $p$-adic étale realization of a motive $M$ (here $\kbold(x)$ is the residual field of $\Lambda_{x}$ and is a finite extension of $\qp$ or $\Qbar_{p}$), then the zeta element $\z(\Spec\Ocal_{F}[1/\Sigma],\Fcali)$ attached to the family should specialize to the zeta element $\z(\Spec\Ocal_{F}[1/\Sigma],\Fcali_{x})$ attached by the conjectures of \cite{BlochKato,FontainePerrinRiou} to the motive $M$ under the natural map
\begin{equation}\label{EqIsoIntro}
\Delta(\Spec\Ocal_{F}[1/\Sigma],\Fcali)\tenseur_{\Lambda}\kbold(x)\fleche\Delta(\Spec\Ocal_{F}[1/\Sigma],\Fcali_{x}),
\end{equation}
which is thus an isomorphism. As above, this simply means that the upper horizontal arrow of diagram \eqref{EqDiagComIntro} should indeed be an isomorphism and that $\z(\Spec\Ocal_{F}[1/\Sigma],\Fcali)$ should interpolate the zeta elements attached to the motivic points of $\Fcali$. It should be noted that this requirement does not obviously hold nor does it immediately follows from general arguments, even for some of the simplest non-trivial cases. The reader familiar with the definition of the fundamental lines in terms of Selmer groups or of the \Nekovar-Selmer complexes of \cite{SelmerComplexes} can convince himself of this fact by checking that \eqref{EqIsoIntro} implies an error-free version of Mazur's Control Theorem when $\Fcali$ is attached to the families of cyclotomic twists of the Tate module of an abelian variety (we return to this point below). In fact, even if one defines $\z(\Spec\Ocal_{F}[1/\Sigma],\Fcali)$ and $\Delta(\Spec\Ocal_{F}[1/\Sigma],\Fcali)$ by the property that they interpolate their counterparts in a Zariski-dense subset of motivic points, the statement remains non-tautological. Indeed, assume for the moment that $\Spec\Lambda$ admits a Zariski-dense subset $X$ of motivic points such that $\z(\Spec\Ocal_{F}[1/\Sigma],\Fcali_{x})$ has been constructed and shown to satisfy the Tamagawa Number Conjectures for $x\in X$. Then, if \eqref{EqIsoIntro} is an isomorphism as predicted for $x\in X$, this data suffices to specify $\z(\Spec\Ocal_{F}[1/\Sigma],\Fcali)$ if it exists. The question then arises as whether the interpolation property for zeta elements hold for $x\notin X$, or in other words whether \cite[Conjecture 3.2.2]{KatoViaBdR} for $\Fcali$ is compatible with the conjectures of \cite{BlochKato,KatoHodgeIwasawa} for $\Fcali_{x}$ at \textit{all} motivic points.

Our first main result establishes that this is the case for a large class of $p$-adic families (see theorem \ref{TheoPositif} for the precise statement, which is significantly stronger).
\begin{TheoEnglish}\label{TheoIntro}
Let $(T,\rho,\Lambda)$ be a $\Gal(\Fbar/F)$-representation with coefficients in a characteristic zero domain $\Lambda$. Assume that $(T,\rho,\Lambda)$ satisfies the hypotheses \ref{HypMain}. Then for all specialization $\psi:\Lambda\fleche\Lambda'$ with values in a characteristic zero domain and all finite sets $\Sigma\supset\{v|p\}$ of finite places of $F$, the $\Lambda'$-adic fundamental line $\Delta_{\Sigma}(T\tenseur_{\Lambda,\psi}\Lambda'/F)$ is defined. If $\Ocal$ is the ring of integers of a finite extension of $\qp$ and $\psi:\Lambda\fleche\Ocal$ is motivic (that is to say when $T_{\psi}\eqdef T\tenseur_{\Lambda,\psi}\Ocal$ is a $\Gal(\Fbar/F)$-stable $\Ocal$-lattice in the $p$-adic étale realization of a pure motive $M$), then $\Delta_{\Sigma}(T_{\psi}/F)$ coincides with the $p$-adic fundamental line of \cite{FontainePerrinRiou} and the natural map 
\begin{equation}\nonumber
\Delta_{\Sigma}(T/F)\tenseur_{\Lambda,\psi}\Ocal\fleche\Delta_{\Sigma}(T_{\psi}/F)
\end{equation}
is an isomorphism.
\end{TheoEnglish}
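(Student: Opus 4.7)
The plan is to identify $\Delta_{\Sigma}(T/F)$ with the determinant of a \Nekovar-style Selmer complex built from $T$, to verify that this complex is perfect over $\Lambda$ and commutes with base change along $\psi$, and finally to reconcile its specialization at a motivic point with the Fontaine--Perrin-Riou fundamental line of the associated motive. Concretely, using definitions \ref{DefFundamental}--\ref{DefFundamentalBis}, one expects $\Delta_{\Sigma}(T/F)$ to be the determinant of a complex $\RGammaf(G_{F,\Sigma},T)$ realized as the mapping fibre of a morphism $\RGamma(G_{F,\Sigma},T)\fleche\bigoplus_{v\in\Sigma}\RGamma(G_{F_{v}},T)/U_{v}^{+}(T)$, twisted by archimedean correction factors $\Det^{-1}_{\Lambda}T^{+}$ at the real places. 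The argument then splits into two largely independent tasks: checking perfection and base-change compatibility of each constituent, and, at a motivic specialization, identifying each specialized local condition with its Bloch--Kato counterpart.

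First I would check perfection and base change piece by piece. Under the finiteness and continuity hypotheses contained in \ref{HypMain}, the global complex $\RGamma(G_{F,\Sigma},T)$ is perfect over $\Lambda$ and its derived base change $\RGamma(G_{F,\Sigma},T)\Ltenseur_{\Lambda}\Lambda'$ computes $\RGamma(G_{F,\Sigma},T_{\psi})$; the local complexes at finite $v$ are perfect by Tate local duality; and the archimedean pieces $T^{+}$ are projective direct summands of $T$ because $p$ is odd. For the local conditions at $v\nmid p$, the \emph{pure monodromy} condition built into \ref{HypMain} forces the unramified complex $[T^{I_{v}}\overset{\Fr_{v}-1}{\fleche}T^{I_{v}}]$ representing $U_{v}^{+}(T)$ to be perfect over $\Lambda$ with a well-defined Euler factor that specializes correctly along $\psi$; at $v\mid p$, the Greenberg--Panchishkin submodule $U_{v}^{+}(T)$ is by assumption a direct summand of $T$, hence itself perfect and manifestly compatible with $\psi$. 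Inserting these inputs into the Knudsen--Mumford determinant formalism produces the base-change isomorphism $\Delta_{\Sigma}(T/F)\tenseur_{\Lambda,\psi}\Lambda'\isom\Delta_{\Sigma}(T_{\psi}/F)$ for any $\psi$ valued in a characteristic zero domain.

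The main obstacle, and the heart of the proof, is the identification at a motivic $\psi:\Lambda\fleche\Ocal$ of the specialized local condition $U_{v}^{+}(T_{\psi})$ at $v\mid p$ with the Bloch--Kato local condition $\Hunf(F_{v},V_{\psi})$ that enters the Fontaine--Perrin-Riou definition of the motivic fundamental line. The former is a priori only a Greenberg-type ordinary or semistable filtration inherited from the $\Lambda$-adic family, while the latter is defined through $\Bcris$ and the fundamental exact sequence of $p$-adic Hodge theory. The key input is the standard comparison between Greenberg local conditions and Bloch--Kato local conditions for semistable (in particular motivic) representations: for such representations $U_{v}^{+}(T_{\psi})\tenseur\qp$ maps to $\RGammaf(F_{v},V_{\psi})$ by a quasi-isomorphism up to an explicit $p$-adic Euler correction factor built from $\Fil^{0}$ and $\varphi$ acting on $D_{\cris}$ and $D_{\dR}$. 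This pins down the $p$-adic Hodge theoretic trivialisation at $p$, while at $v\nmid p$ the analogous reconciliation is automatic by pure monodromy, since $U_{v}^{+}(T_{\psi})$ then already computes the Bloch--Kato unramified condition with the expected Euler factor. Combined with the base-change isomorphism of the previous paragraph, this identifies $\Delta_{\Sigma}(T_{\psi}/F)$ with the Fontaine--Perrin-Riou line of $M$ and makes diagram \eqref{EqDiagComIntro} commute, as required.
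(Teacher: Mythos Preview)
Your proposal rests on a misreading of the fundamental line used in the paper. In definitions \ref{DefFundamental} and \ref{DefFundamentalBis}, $\Delta_{\Sigma}(T/F)$ is \emph{not} the determinant of a Selmer complex with local conditions $U_{v}^{+}(T)$ at all $v\in\Sigma$; it is built from $\RGamma_{\et}(\Ocal_{F}[1/\Sigma_{0}],T)$ together with, at each $v\in\Sigma_{0}\backslash\Sigma$, the pair $\Det_{\Lambda}\RGamma_{\et}(F_{v},T)\tenseur\Det^{-1}_{\Lambda}(\Lambda/\Eul_{v}(T,1)\Lambda)$, and an archimedean factor $\Det^{-1}_{\Lambda}T(-1)^{+}$. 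In particular there is \emph{no local condition at $v\mid p$} anywhere in the definition, and the paper makes a point of this: it explicitly notes that no hypothesis on $\rho|G_{F_{v}}$ for $v\mid p$ is required. Your assumption of a Greenberg--Panchishkin summand at $v\mid p$ is therefore neither present in \ref{HypMain} nor needed, and the ``main obstacle'' you identify (comparing Greenberg and Bloch--Kato conditions at $p$) simply does not arise.

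What you are also missing is the genuine content of the proof, which lives entirely at places $v\in\Sigma_{0}\backslash\Sigma$ (all prime to $p$ since $\Sigma\supset\{v|p\}$). The global and local cohomology complexes commute with $-\Ltenseur_{\Lambda,\psi}\Lambda'$ for formal reasons, so the question is whether the Euler-factor term $\Det^{-1}_{\Lambda}(\Lambda/\Eul_{v}(T,1)\Lambda)$ does, i.e.\ whether $\psi(\Eul_{v}(T,1))=\Eul_{v}(T_{\psi},1)$. This amounts to $\rank_{\Lambda}T^{I_{v}}=\rank_{\Lambda'}T_{\psi}^{I_{v}}$, which is \emph{not} built into \ref{HypMain} and can fail for general characteristic zero $\psi$ (indeed the paper exploits exactly such failures in theorem \ref{TheoDeux}). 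The actual argument uses that a motivic specialization $\pi$ factors through $\psi$: by \cite{SahaPreprint} the monodromy filtration on $V$ specializes to the weight filtration on $V_{\pi}$, and since $V_{\pi}$ is pure the weight and monodromy filtrations on $V_{\pi}$ agree, whence $\rank_{\Lambda}T^{I_{v}}=\rank_{\Ocal_{\pi}}T_{\pi}^{I_{v}}$; the intermediate rank $\rank_{\Lambda'}T_{\psi}^{I_{v}}$ is then squeezed between these two equal numbers. Your sketch invokes ``pure monodromy built into \ref{HypMain}'' as if it were a standing hypothesis on $T$ itself, but it is not: the whole point is to deduce the rank equality from purity of a single motivic point via the monodromy/weight comparison of \cite{SahaPreprint}.
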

Here, we mention that we take as a defining property of the $p$-adic étale realization $M_{\et,\pid}$ of a pure motive $M/F$ the fact that, for all $v\nmid p$, the Weil-Deligne representation of $\Gal(\Fbar_{v}/F_{v})$ attached to $M_{\et,\pid}$ is pure in the sense of the Weight-Monodromy Conjecture (see conjecture \ref{ConjWMC} below for details). We also note that already the unconditional definition of $\Delta_{\Sigma}(T\tenseur_{\Lambda,\psi}\Lambda'/F)$ appears to be new, even in the ordinary case (\cite{KatoViaBdR} requires the étale sheaf on $\Spec\Z[1/\Sigma]$ attached to $T$ to belong to the category $D_{\ctf}(\Spec\Ocal_{F}[1/\Sigma],\Lambda)$ of perfect complexes of constructible étale sheaves of $\Lambda$-modules on $\Spec\Ocal_{F}[1/\Sigma]$, \cite{FukayaKato} requires $\Sigma$ to contain all places of ramification of $T$ and \cite{GreenbergIwasawaMotives} requires $T$ to be ordinary and $\Lambda$ to be a normal ring).

In addition to the case of families arising from classical Iwasawa theory, which is easy and well-known, the hypotheses of theorem \ref{TheoIntro} are known to hold for many $p$-adic families of automorphic Galois representations, for instance those attached to $p$-adic families of Hilbert eigencuspforms or those parametrized by the spherical Hecke algebra of a reductive group $\G$ compact at infinity. More precisely, let $\G$ be a reductive group which is either the unit group of a quaternion algebra over a totally real field $F$ or a totally definite unitary group in $n$ variables over a totally real field $F^{+}$ split by a CM extension $F/F^{+}$. Let $\mgot$ be a non-Eisenstein ideal of the reduced, spherical Hecke algebra $\Hsm(U^{p})$ acting on the completed cohomology in middle degree of the Shimura variety attached to $\G$ (here $\Sigma$ is a finite set of primes of $\Ocal_{F}$; we refer to section \ref{SubAutomorphic} below for further details about the notations). Let $T$ be the $\Gal(\Fbar/F)$-representation with coefficients in $\Hsm(U^{p})$ interpolating the representations attached to automorphic systems of Hecke eigenvalues occurring in completed cohomology ($T$ is characterized by the property that $\tr(\Fr(w))=T_{1}(w)$ for all $w$ in a dense subset of places of $F$). Applying theorem \ref{TheoIntro} in this setting yields the following corollary, which strengthens and vastly generalizes the control theorems of \cite{MazurRational,OchiaiControl,FouquetOchiai}. 
\begin{CorEnglish}\label{CorIntro}
Let $\psi:\Hsm(U^{p})\fleche\Ocal_{\pi}\subset\Qbar_{p}$ be the morphism sending a Hecke operator to its eigenvalue in its action on a cohomological automorphic representation $\pi$. Let $K$ be a finite abelian extension of $F$ unramified outside $\Sigma$. Denote by $\Fcali$ the étale sheaf attached to $T$ and by $\Fcali_{\psi}$ the étale sheaf attached to $T_{\psi}=T\tenseur_{\Hsm,\psi}\Ocal_{\pi}$. If $\z(\Fcali,\Spec\Ocal_{K}[1/\Sigma])$ is a basis of $\Delta_{\Sigma}(T/K)$, then $\z(\Fcali,\Spec\Ocal_{K}[1/\Sigma])\tenseur1\in\Delta_{\Sigma}(T/K)\tenseur_{\Hsm,\psi}\Ocal_{\pi}\isocan\Delta_{\Sigma}(T_{\psi}/K)$ is a basis of $\Delta_{\Sigma}(T_{\psi}/K)$.
\end{CorEnglish}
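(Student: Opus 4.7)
The plan is to invoke Theorem \ref{TheoIntro} directly, with $\Lambda = \Hsm(U^{p})$, $\Lambda' = \Ocal_{\pi}$, and the base field enlarged from $F$ to $K$ (the hypothesis that $K/F$ is finite abelian and unramified outside $\Sigma$ ensures that the relevant data extend to $K$ without introducing new ramification). Since $\mgot$ is non-Eisenstein, the pseudo-character carried by $\Hsm(U^{p})$ lifts to an honest Galois representation $(T,\rho)$ with coefficients in $\Hsm(U^{p})$; passing to an irreducible component of $\Spec\Hsm(U^{p})$ if necessary places us in the setting of a characteristic zero domain as required by Theorem \ref{TheoIntro}.

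First I would verify the assumptions \ref{HypMain} for $(T,\rho,\Hsm(U^{p}))$. The existence and local-global compatibility of $T$ at all finite places is known in both cases: by Carayol, Taylor, and Blasius--Rogawski for the unit group of a quaternion algebra over a totally real field $F$, and by Harris--Taylor, Shin, Chenevier--Harris and Caraiani (with completions by Scholze) for a totally definite unitary group. In particular the Weil--Deligne representation attached to $T$ at each place $v\nmid p$ is pure in the sense of the Weight--Monodromy Conjecture, which is the main technical input extracted from the Langlands program that \ref{HypMain} requires.

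Next I would check that $\psi$ is a motivic specialization in the sense of Theorem \ref{TheoIntro}. For a cohomological $\pi$, the representation $T_{\psi}$ appears, directly or after Jacquet--Langlands transfer and descent to a suitable quasi-split inner form, in the $p$-adic étale cohomology of a Shimura variety; hence $T_{\psi}$ is a $\Gal(\Fbar/F)$-stable $\Ocal_{\pi}$-lattice inside the $p$-adic étale realization of a pure motive $M_{\pi}/F$ in the sense of conjecture \ref{ConjWMC}. Restriction to $\Gal(\Fbar/K)$ preserves motivicity since $K/F$ is finite.

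Granted these two verifications, Theorem \ref{TheoIntro} supplies the canonical isomorphism
\begin{equation}\nonumber
\Delta_{\Sigma}(T/K)\tenseur_{\Hsm,\psi}\Ocal_{\pi}\isom\Delta_{\Sigma}(T_{\psi}/K).
\end{equation}
Since $\Delta_{\Sigma}(T/K)$ is a projective $\Hsm(U^{p})$-module of rank one, any $\Hsm$-basis of the left-hand side (in particular the putative zeta element $\z(\Fcali,\Spec\Ocal_{K}[1/\Sigma])$, once tensored with $1$) is sent to an $\Ocal_{\pi}$-basis of the right-hand side, which is exactly the conclusion. The hard part is not the application of Theorem \ref{TheoIntro} itself, which is formal once its hypotheses are in place, but rather the purity condition inside \ref{HypMain}: this is the deep Langlands-theoretic input, and it is precisely its present availability in these two automorphic settings (and not, say, for reductive groups without associated Shimura varieties) that dictates the scope of the corollary.
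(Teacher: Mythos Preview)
Your approach is essentially the same as the paper's: reduce to the main theorem (stated in the body as Theorem~\ref{TheoPositif}) by verifying assumption~\ref{HypMain}, and supply the purity/local--global input from the automorphic literature (the paper cites Langlands, Ohta, Carayol, Blasius in the quaternionic case and Clozel, Harris--Taylor, Taylor--Yoshida, Shin, Chenevier--Harris, Caraiani in the unitary case, in close agreement with your list). The one step the paper makes explicit that you elide is item~\ref{ItLanglands} of assumption~\ref{HypMain}: that the Euler polynomial $\det(1-\Fr(w)X\mid T^{I_w})$ has coefficients in $R(\aid)$ itself rather than merely in its normalization; this is not immediate from local--global compatibility at individual automorphic points and is handled separately as Proposition~\ref{PropLocal} via a density argument combined with purity (the Saha result on monodromy in families).
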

Informally, this corollary states that if the ETNC is true for $T$, then it is true for any motivic point of $T$. Maybe because the generalized Iwasawa main conjecture \cite[Conjecture 3.2.2]{KatoViaBdR} is hopelessly out of reach already in very simple cases, it seems that the possibility that statements like theorem \ref{TheoIntro} and corollary \ref{CorIntro} could hold has been neglected so that, for instance, this author believes they are new already for Hida families of quaternionic eigencuspforms or for the $p$-adic families of modular forms occurring in the completed cohomology of the tower of modular curves. Note also the important fact that we make no assumption on the local properties of $T$ at $v|p$ (most of the literature assumes that $\rho|G_{F_{v}}$ is ordinary, flat or of finite slope at $v|p$).
\subsubsection{Failure at non-motivic points}
While theorem \ref{TheoIntro} settles the question of motivic specialization (and in fact the more general class of specializations with values in characteristic zero domains and for which the weight and monodromy filtrations are defined and coincide), it does not apply to all specializations. This is the object of our second main result, which explains the subtitle of this note. This subtitle, like the title of \cite{GrothendieckTrivial} which it parodies, is misleading as readers might be naturally induced to believe that our results cast doubts on the conjectures of \cite{KatoViaBdR} referred to above. In fact, they point towards an interesting connection between the $p$-adic properties of special values of $L$-function, the $p$-adic properties of the Local Langlands Correspondence (\cite{EmertonCoates,EmertonHelm,EmertonPatching}) and Ihara's lemma and its generalizations (\cite{IharaLemma,DiamondTaylor,ClozelHarrisTaylor}). This author believes that such a surprising connection, insofar as it impacts the Equivariant Tamagawa Number Conjectures, strengthens rather than weakens their plausibility.

In order to understand what may require a modification in \cite[Conjecture 3.2.2]{KatoViaBdR}, it is helpful to come back to the definition of the $p$-adic étale realization of the fundamental line of a pure, strictly critical motive $M$ (see section \ref{SubMotives} for the meaning of this property) over a number field $F$ with coefficients in $\Q$. As we saw above, in the normalization of \cite{KatoViaBdR}, this is by definition the one-dimensional $\qp$-vector space
\begin{equation}\nonumber
\Delta(M_{\et,p})=\Det^{-1}_{\qp}\RGamma_{\et}(\Ocal_{F}[1/\Sigma],M_{\et,p})\tenseur_{\qp}\produittenseur{\s:F\plonge\C}{}\Det^{-1}_{\qp}M_{\et,p}(-1)^{+}.
\end{equation}
As recalled in section \ref{SubFun} below, in the computation of $L(M_{\et,p}^{*}(1),0)$ in terms of $\Delta(M_{\et,p})$, the étale cohomology complex recovers through $p$-adic comparison theorems a rational structure coming from the de Rham realization of $M$. The role of the term $\produittenseur{\s:F\plonge\C}{}\Det^{-1}_{\qp}M_{\et,p}(-1)^{+}$ is to yield a second rational structure coming from the Betti realization of $M$ which, when compared with the first one, computes the special value. This, and the fact that the tensor product has to be taken over all complex embeddings of $\C$, points out to the fact that even though it is the $p$-adic étale realization of $M$ which appears in this second term, this term is in essence a Betti cohomology group (and indeed, it is believed to come by tensor product with $\qp$ from the Betti term in a putative motivic fundamental line). For a single motive, the canonical isomorphism $M_{B}\tenseur_{\Q}\qp\isocan M_{\et,p}$ ensures that identifying Betti and étale cohomology groups can cause no harm but this discussion raises the question of the correct analogue of $M_{\et,p}$ in the case of $p$-adic families of Galois representations.

The implicit choice made in \cite{KatoViaBdR} is that étale cohomology remains the correct choice, so that $\produittenseur{\s:F\plonge\C}{}\Det^{-1}_{\qp}M_{\et,p}(-1)^{+}$ is replaced by $\produittenseur{\s:F\plonge\C}{}\Det^{-1}_{\Lambda}T(-1)^{+}$ in the definition of the $\Lambda$-adic fundamental line
\begin{equation}\nonumber
\Delta(T)=\Det^{-1}_{\Lambda}\RGamma_{\et}(\Ocal_{F}[1/\Sigma],T)\tenseur_{\Lambda}\produittenseur{\s:F\plonge\C}{}\Det^{-1}_{\Lambda}T(-1)^{+}
\end{equation}
of a family of Galois representation $T$ parametrized by $\Spec\Lambda$. However, this choice runs into difficulty when fundamental lines are specialized at non-motivic points. Indeed, suppose that $T_{x}$ is a $\zp[\Gal(\Qbar/\Q)]$-module arising as specialization of a $\Lambda$-adic family $T$ which is ramified at some prime $\ell$ different from $p$ and such that $T_{x}$ is unramified at $\ell$. In that case, as we hinted above, the natural map 
\begin{equation}\label{EqErreur}
(\Det_{\Lambda}\RGamma_{\et}(\Z[1/\Sigma],T))\tenseur_{\Lambda}\zp\fleche\Det_{\zp}\RGamma_{\et}(\Z[1/\Sigma],T_{x})
\end{equation}
induced by $x:\Lambda\fleche\zp$ may fail to be an isomorphism in general when $\Sigma$ does not contain $\ell$ whereas $T^{+}\tenseur_{\Lambda,x}\zp$ is by construction isomorphic to $T^{+}_{x}$. When this occurs, nothing in $\Delta(T)\tenseur_{\Lambda,x}\zp$ can compensate the error term coming from the failure of \eqref{EqErreur} to be an isomorphism so the natural map $\Delta(T)\tenseur_{\Lambda,x}\zp\fleche\Delta(T_{x})$ fails to be an isomorphism, perhaps in contradiction with \cite[Conjecture 3.2.2 (i)]{KatoViaBdR}.

Our second main result is that that phenomenon is pervasive as soon as $p$-adic families of motives of rank greater than 1 which do not arise from classical Iwasawa theory are considered.
\begin{TheoEnglish}\label{TheoDeux}
The specialization property at non-motivic point of \cite[Conjecture 3.2.2 (i)]{KatoViaBdR} probably fails already for generic Hida families of ordinary eigencuspforms for $\GL_{2}(\Q)$ if fundamental lines are defined as in \emph{loc.cit.}.
\end{TheoEnglish}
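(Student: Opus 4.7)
The plan is to exhibit, for a generic Hida family $T$ over a local component $\Hecke_{\mgot}$ of an ordinary Hida-Hecke algebra of tame level $N$ divisible by some prime $\ell\neq p$, a non-motivic specialization $x:\Hecke_{\mgot}\fleche\Ocal$ along which the local representation $T_{x}|G_{\Q_{\ell}}$ becomes unramified even though $T|G_{\Q_{\ell}}$ is ramified, and then to check that the phenomenon sketched around \eqref{EqErreur} truly manifests itself: for $\Sigma=\{p,\infty\}$, the natural map
\begin{equation}\nonumber
\Det_{\Hecke_{\mgot}}\RGamma_{\et}(\Z[1/\Sigma],T)\tenseur_{\Hecke_{\mgot},x}\Ocal\fleche\Det_{\Ocal}\RGamma_{\et}(\Z[1/\Sigma],T_{x})
\end{equation}
fails to be an isomorphism while the Betti-type term $\Det^{-1}_{\Ocal}T_{x}(-1)^{+}$ visibly receives an isomorphism from $\Det^{-1}_{\Hecke_{\mgot}}T(-1)^{+}\tenseur_{\Hecke_{\mgot},x}\Ocal$, so that the defect cannot be absorbed and the full fundamental line of \cite{KatoViaBdR} fails to commute with $x$.

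The first step is the construction of such specializations. A standard local model at a Steinberg-type prime $\ell$ dividing $N$ exactly once displays $T|G_{\Q_{\ell}}$ as a triangular representation whose monodromy is controlled by a single element $u\in\Hecke_{\mgot}$ (essentially the image of $U_{\ell}^{2}-\ell\diamant{\ell}$ up to a unit). At all classical arithmetic points $x_{k}$, $u$ evaluates to a unit so that $T_{x_{k}}|G_{\Q_{\ell}}$ is genuinely Steinberg, in accordance with theorem \ref{TheoIntro}. However, arguments from $p$-adic local Langlands in families (\cite{EmertonHelm,EmertonPatching}) and from the study of the irreducible components of local deformation rings at $\ell$ predict that $u$ is not a unit in $\Hecke_{\mgot}$ and in fact cuts out a codimension-one closed subscheme of $\Spec\Hecke_{\mgot}$ along which the monodromy degenerates. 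Because purity of the Weil-Deligne representation is automatic at motivic points (again by theorem \ref{TheoIntro}), every closed point $x$ of that subscheme is necessarily non-motivic and has $T_{x}|G_{\Q_{\ell}}$ unramified.

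The second step is the concrete identification of the mismatch. Excision in \'etale cohomology provides a distinguished triangle comparing $\RGamma_{\et}(\Z[1/\Sigma],-)$ and $\RGamma_{\et}(\Z[1/\Sigma\cup\{\ell\}],-)$, the third term being $\RGamma(\Q_{\ell}^{\ur}/\Q_{\ell},(-)^{I_{\ell}})[-1]$. Applying this to $T$ and to $T_{x}$ and comparing through the specialization map reduces the failure of \eqref{EqErreur} to the failure of the natural map $T^{I_{\ell}}\tenseur_{\Hecke_{\mgot},x}\Ocal\fleche T_{x}^{I_{\ell}}$ to be an isomorphism. The right-hand side here equals all of $T_{x}$, of rank $2$, while the left-hand side is generically of rank $1$; the resulting Tor defect produces a spurious $\ell$-Euler factor which, for generic choices of $x$, is non-trivial, hence makes the map of \eqref{EqErreur} non-invertible.

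The main obstacle is the first step. While the expected Breuil-M\'ezard-style picture for deformations of a residually Steinberg local representation of $G_{\Q_{\ell}}$ strongly suggests the existence of a component of $\Spec\Hecke_{\mgot}$ along which the monodromy vanishes, proving this rigorously for the global Hida-Hecke algebra rather than for an abstract local deformation ring requires an $R=\Hecke$ theorem sharp enough at $\ell$ to transport the component structure. This is precisely the source of the qualifier ``probably'' in the statement: modulo this expected but not yet fully documented input, whose proof should be within reach through Emerton-Helm local Langlands in families for $\GL_{2}(\Q_{\ell})$, the rest of the argument is essentially formal. It also clarifies why no analogous failure occurs for families arising from classical Iwasawa theory, in which the universal Galois representation is an induction of a character and no monodromic degeneracy of this kind can happen.
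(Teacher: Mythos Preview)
Your proposal identifies the correct mechanism --- a jump in the rank of $T^{I_{\ell}}$ under specialization forces a mismatch of local Euler factors that the Betti-type term $\Det^{-1}T(-1)^{+}$ cannot absorb --- but your implementation and your diagnosis of the ``probably'' differ from the paper's, and the differences matter.

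The paper does not work over the full local ring $\Hsmord(U^{p})_{\mgot}$. Instead it isolates two distinct irreducible components $R(\aid_{1})^{\ord}$ and $R(\aid_{2})^{\ord}$: on $\aid_{1}$ the family is genuinely Steinberg at $\ell$ (so $T(\aid_{1})^{I_{\ell}}$ has rank $1$ and $\Eul_{\ell}(T(\aid_{1}),1)$ is a unit), while on $\aid_{2}$ the family is unramified at $\ell$ (rank $2$, Euler factor a non-unit). Ribet level-raising and level-lowering produce such components; regularity of each $R(\aid_{i})^{\ord}$ is arranged so that Definition~\ref{DefFundamental} applies on the nose. The key global input is \cite[Proposition~5.2.2]{EmertonPollackWeston}, which guarantees that the two components meet at some height-one prime $x$. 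Specializing $T(\aid_{1})^{\ord}$ at $x$ then yields an unramified $T_{x}$ (since $x$ also lies on $\aid_{2}$), and the Euler-factor bookkeeping gives the contradiction. A fully explicit example with $p=5$ and two elliptic curves of conductors $52$ and $364$ is worked out.

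By contrast, your argument over the whole $\Hsmord_{\mgot}$ runs into the difficulty that $T^{I_{\ell}}$ is not of constant rank there (rank $1$ on Steinberg components, rank $2$ on unramified ones), so speaking of a ``codimension-one locus where monodromy degenerates'' is imprecise: the vanishing locus of your $u$ contains entire unramified components, not just a divisor. What you really need is the intersection $\aid_{1}\cap\aid_{2}$ viewed from the $\aid_{1}$ side, which is exactly the paper's setup.

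Finally, the source of ``probably'' is different. The paper does not invoke an $R=\Hecke$ theorem or local Langlands in families; everything happens inside the Hecke algebra via \cite{EmertonPollackWeston}. The only unverified point is whether the intersection prime $x$ has residue characteristic zero (rather than $p$). This is widely expected but not documented, and is stated as the sole missing ingredient in the explicit example.
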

The reason for the cautious adverb in the statement of this theorem is that no actual example of characteristic zero representations satisfying the requirement put on $T_{x}$ have appeared in the literature as far as this author knows (to recall, $T_{x}$ should be unramified at $\ell$ but should belong to a $p$-adic family of Galois representation which is generically ramified at $\ell$). The consensus among experts is that they exist and that they are in some sense the generic case (a consideration of generic monodromy suggests, for instance, that the universal deformation space $\Spec R_{\Sigma}(\rhobar)$ parametrizing deformations unramified outside a set $\Sigma$ of an absolutely irreducible residual automorphic Galois representation $\rhobar$ of rank at least 2 admit such points for an infinite number of finite sets $\Sigma$).

In view of theorem \ref{TheoIntro}, a somewhat tautological corrected version of the Equivariant Tamagawa Number Conjecture can be formulated simply by requiring that fundamental lines be compatible only with motivic specializations. However, this author does not consider such a weakening of the conjecture satisfactory. One of the main interest of $p$-adic families of motives (and the reason they were considered in the first place) is the light they show on congruences between motives. In order to answer, or simply make sense of, the question of whether congruent motives have congruent special values of $L$-function (the question asked for eigencuspforms in \cite{MazurValues}), it is necessary that $p$-adic fundamental lines and zeta elements be defined for Galois representations with coefficients in artinian rings (typically, but not exclusively, $\Fp_{q}$ or $\Z/p^{s}\Z$). Specializations with values in such rings can never be motivic so banning them precludes any discussion of congruences between special values of $L$-function. Besides, recent years have shown the utmost arithmetic significance of not necessarily classical $p$-adic automorphic representations and Galois representations. It would seem inadequate to decree that Iwasawa theory has nothing to say about them. 

If, unsatisfied with the exclusion of non-motivic specializations, we seek to modify the Equivariant Tamagawa Number Conjecture to take them into account, we encounter the following question. As étale cohomology is not the appropriate choice for the definition of $p$-adic fundamental lines, what is? In the general case, I have no answer to offer to this question, but in the case of $p$-adic families of Galois representations arising from $p$-adic families of automorphic representations of a reductive group $\G$ over $F$ admitting Shimura varieties of dimension $d$, it is very tempting to speculate that the correct $p$-adic analogue of Betti realization is the completed cohomology
\begin{equation}\nonumber
\Htilde^{d}_{c}(U^{p},\Ocal)\eqdef\limproj{s}\left(\liminj{U_{p}}\ H^{d}_{c}(Y(U^{p}U_{p})(\C),\Ocal/\varpi^{s})\right)
\end{equation}
of \cite{EmertonCompleted}. Here, $U^{p}$ is a compact open subgroup of $\G(\A_{F}^{(p\infty)})$, $U_{p}$ is a compact open subgroup of $\G(F\tenseur_{\Q}\qp)$ and $Y(U^{p}U_{p})$ is the double quotient $\G(F)\backslash\G(\A_{F})/U^{\circ}_{\infty}U^{p}U_{p}$ for $U_{\infty}^{\circ}$ the connected component of the identity of a maximal compact subgroup of $\G(F\tenseur_{\Q}\R)$. Indeed, the local-global conjecture of \cite{EmertonCoates,EmertonPatching,EmertonICM} predicts that the $\rho$-isotypic part of completed cohomology involves the Local Langlands Correspondence normalized as in \cite{BreuilSchneider}. If $\rho|G_{\Q_{\ell}}$ does not satisfy the Weight-Monodromy Conjecture, this normalization involves an extra Euler factor which exactly compensates the error term appearing in the specialization of the étale cohomology complex. Under this choice of fundamental line then, neither the first nor the second term of the fundamental line commute with specializations but the error terms appearing in each factors cancel each other. 

\paragraph{Relation with \cite{FukayaKato}}
There is a point the importance of which cannot be overestimated. Because the phenomena at play in theorem \ref{TheoDeux} are essentially local at $\ell\neq p$ a place of ramification of $T$, they can play a role only in the formulation of the Equivariant Tamagawa Number Conjectures as given in \cite{KatoViaBdR} and not in the formulation of \cite{FukayaKato}. To be more concrete, it is a consequence of theorem \ref{TheoIntro} (or more precisely of its proof) that a specialization $x$ can fail to induce a canonical isomorphism between the fundamental lines $\Delta(\Spec\Ocal_{F}[1/\Sigma],\Fcali)$ and $\Delta(\Spec\Ocal_{F}[1/\Sigma],\Fcali_{x})$ \textit{only if} there exists $v\notin\Sigma$ such that $\Fcali_{x}$ seen as $G_{F_{v}}$-representation is less ramified than $\Fcali$. As \cite{FukayaKato} defines fundamental lines only for $\Sigma$ containing all the places of ramification of $\Fcali$, these fundamental lines are by construction compatible with arbitrary specializations. Correlatively, they contain less precise information than the one of \cite{KatoViaBdR} (or the one constructed from completed cohomology in this manuscript) as they do not $p$-adically interpolate Euler factors at places of bad reduction. Because claims to the contrary have circulated even in publications (much to the dismay of this author), we restate this point: theorem \ref{TheoDeux} does not affect the framework of \cite{FukayaKato}. 
\paragraph{Notations}
If $F$ is a field, denote by $G_{F}$ the Galois group of the separable closure of $F$. If $F$ is a number field, $\Sigma$ is a finite set of finite places of $F$ and $K$ is an extension of $F$, denote by $K_{\Sigma}$ the maximal Galois extension of $K$ which is ramified only at for places $w$ above the places of $F$ in $\Sigma\cup\{v|\infty\}$. Denote by $G_{K,\Sigma}$ the Galois group $\Gal(K_{\Sigma}/K)$. The ring of integers of $F$ is denoted by $\Ocal_{F}$. If $\ell$ is a place of $\Q$ (including $\infty$), denote by $F_{\ell}$ the tensor product $F\tenseur_{\Q}\Q_{\ell}$ and by $I_{F,\ell}$ the set of field embeddings of $F$ inside $F_{v}$ for $v|\ell$.
\section{The Equivariant Tamagawa Number Conjectures for motives}
We recall the formulation of the Equivariant Tamagawa Number Conjectures (ETNC for short) for a pure motive $M$ over a number field $F$ with coefficients in another number field $E$ and with a supplementary action of the Galois group of a finite abelian extension $K$ of $F$. Despite the fact that several expositions of these conjectures already exist (\cite{BlochKato,FontaineValeursSpeciales,KatoHodgeIwasawa,FontainePerrinRiou,BurnsFlach,FukayaKato}), we felt the need to include this section for two reasons. First, not all the references above normalize in the same way the objects considered nor do they define in the same way the maps relating motivic fundamental lines and $p$-adic fundamental lines (this point, though crucial for a precise statement, is sometimes neglected in the literature on the ETNC) and, non coincidentally, a common point of discrepancy is in the treatment of the local Euler factors whose study lies at the core of this manuscript. Second, as we intend to study $p$-adic families of motives, including the ones coming from classical Iwasawa theory, it is necessary for us that $K/F$ may be non-trivial. The main class of $p$-adic families satisfying theorem \ref{TheoPositif} below requires $F$ to be a CM extension, hence a non trivial one. Our main guiding examples all arise from $p$-adic families of automorphic forms and it is expected that the field of coefficients $E$ of motives in automorphic families grow without bound (see \cite{HidaHecke} for some known cases). Hence it is important to us that all the extensions $F/\Q$, $E/\Q$ and $K/F$ may be non-trivial. None of the references above (and none known to this author) allow  this generality. On the other hand, the contribution to the supplementary action of $\Gal(K/F)$ to the problems studied in this note is easy and well understood (it is recalled in section \ref{SubPunctual}) and the fields of coefficients of automorphic motives are naturally subfields of $\C$ (as this is the case of the reflex field of a Shimura variety) so the reader who wishes to take $K=F$ and to fix an embedding of $E$ in $\C$ may do so without fear of neglecting any essential difficulty.
\subsection{Motives and motivic Galois representations}
\subsubsection{Realizations}\label{SubMotives}
Let $E$ and $F$ be two number fields. Denote by $X$ a proper, smooth scheme over $F$ of dimension $d$ with good reduction outside a finite set $\Sigma_{0}$ of finite places of $F$. For $Y$ a proper, smooth scheme, denote by $CH^{i}(Y)$ the abelian group of closed, irreducible sub-schemes of $Y$ of codimension $i$ modulo homological equivalence. A pure motivic data $M$ over $F$ with coefficients in $E$ is a quadruple $(X,\epsi,i,j)$ where $\epsi$ is a projector of $CH^{\dim X}(X\times_{F} X)\tenseur_{\Z}E$ and $(i,j)\in\Z^{2}$ are integers. It is conjectured that to $(X,\epsi,i,j)$ is attached a motive $h^{i}(X)(\epsi)(j)$ over $F$ and with coefficients in $E$ which we also denote by $M$. Independently of the existence of this conjectural framework, pure motivic data admit cohomological realizations satisfying the following properties.
\paragraph{Betti realizations}For all $\s:F\plonge\C$, the Betti realization of $M$ is the $E$-vector space 
\begin{equation}\nonumber
M_{B,\s}=H^{i}((X\times_{F,\s}\C)(\C),\Q(j))(\epsi).
\end{equation}
When $\s$ is real, $M_{B,\s}$ is endowed with an action of $\Gal(\C/\R)$ given by the natural action on $(X\times_{F,\s}\C)(\C)$ and on $\Q(j)\eqdef(2\pi i)^{j}\Q$. When $\s$ is complex, we consider $M_{B,\s}$ to be endowed with the trivial action of $\Gal(\C/\R)$. In both cases, we denote by $M_{B,\s}^{\pm}$ the $E$-vector space on which the non-trivial element $c$ of $\Gal(\C/\R)$ acts with the eigenvalue $\pm1$.

For each choice of embedding $\s:F\plonge\C$, the $E\tenseur\C[\Gal(\C/\R)]$-module $M_{B,\s}\tenseur_{E}\C$ decomposes as
\begin{equation}\nonumber
M_{B,\s}\tenseur_{E}\C=\sommedirecte{\tau:E\plonge\C}{}\left(M_{B,\s}\tenseur_{E,\tau}\C\right).
\end{equation}
For each choice of embeddings $\s,\tau$ as above, the Hodge decomposition
\begin{equation}\nonumber
(M_{B,\s}\tenseur_{E}E_{\tau})\tenseur_{\R}\C=\sommedirecte{p+q=w}{}H^{p,q}_{\s,\tau}
\end{equation}
with action of $\Gal(\C/\R)$ on both sides of the tensor product endows $V_{\s,\tau}\eqdef M_{B,\s}\tenseur_{E}E_{\tau}$ with the structure of an $E_{\tau}$-Hodge structure over $F_{\s}$ in the sense of \cite{DeligneHodgeII} pure of weight $w=i-2j$ (by \cite{Hodge}).

More concretely, there is a finite, increasing filtration $W^{\bullet}$ indexed by $\Z$ on $V_{\s,\tau}$ and there are two finite, decreasing, $w$-opposite Hodge filtrations $\Fil^{\bullet}$ and $\overline{\Fil}^{\bullet}$ indexed by $\Z$ on $V_{\s,\tau}\tenseur_{\R}\C$ such that the filtrations $W^{\bullet},\Fil^{\bullet}$ and $\overline{\Fil}^{\bullet}$ on $V_{\s,\tau}\tenseur_{\R}\C$ are opposite in the sense of \cite[Définition 1.2.7]{DeligneHodgeII}: for all $r\in\Z$, the natural map induces an isomorphism
\begin{equation}\nonumber
\Fil^{r}\Gr_{n}^{W}(V_{\s,\tau}\tenseur_{\R}\C)\oplus\overline{\Fil}^{n+1-r}\Gr_{n}^{W}(V_{\s,\tau}\tenseur_{\R}\C)\simeq\Gr_{n}^{W}(V_{\s,\tau}\tenseur_{\R}\C)=\begin{cases}
V_{\s,\tau}\tenseur_{\R}\C&\textrm{if $n=w$,}\\
0&\textrm{else.}
\end{cases}
\end{equation}
If $\s$ is a real embedding, then $V_{\s,\tau}$ is moreover endowed with an action of $\Gal(\C/\R)$ compatible with the weight filtration and such that the Hodge filtration comes through tensor product over $\R$ to $\C$ from a filtration on $(V_{\s,\tau}\tenseur_{\R}\C)^{\Gal(\C/\R)}$. If $\tau$ is a complex embedding, then $\C$ embeds in $\End_{\R}(V_{\s,\tau})$. We recall that even when $\s$ and $\tau$ are complex embeddings, the $V_{\s,\tau}$ are always regarded as $\R$-vector spaces.

If $V_{\s,\tau}$ is a Hodge structure over $\R$ (equivalently, if $\s$ is real), denote by $V_{\s,\tau}^{\pm}$ the subspace $V_{\s,\tau}^{c=\pm1}$ of $V_{\s,\tau}$ and by $V_{\s,\tau,\dR}$ the subspace $(V_{\s,\tau}\tenseur_{\R}\C)^{\Gal(\C/\R)}=V_{\s,\tau}^{+}\oplus iV_{\s,\tau}^{-}$. Otherwise, put $V_{\s,\tau}^{+}=V_{\s,\tau}$ and $V_{\s,\tau,\dR}=V_{\s,\tau}\tenseur_{\R}\C$. In both cases, the quotient of $V_{\s,\tau,\dR}$ by $V_{\s,\tau}^{+}$ is $V_{\s,\tau}(-1)^{+}$ and there is a map of $E_{\tau}$-vector spaces
\begin{equation}\nonumber
\psi_{\s,\tau}:\Fil^{0}(V_{\s,\tau}\tenseur_{\R}\C)\subset V_{\s,\tau,\dR}\surjection V_{\s,\tau}(-1)^{+}.
\end{equation}
\paragraph{De Rham realization}The de Rham realization (\cite{GrothendieckDeRham}) of $M$ is a finite dimensional $F\tenseur_{\Q}E$-module
\begin{equation}\nonumber
M_{\dR}=H^{i}_{\dR}(X,\Omega^{\bullet}_{X/F})(\epsi)(j)
\end{equation}
with a finite filtration $\Fil^{i}M_{\dR}$ indexed by $\Z$ such that $\Fil^{k}M_{\dR}$ is the $F\tenseur_{\Q}E$-module image of $H^{i}_{\dR}(X,\Omega^{\geq k+j}_{X/F})(\epsi)$ inside $M_{\dR}$. The tangent space $t_{\dR}$ of $M$ is the $F\tenseur_{\Q} E$-module $M_{\dR}/\Fil^{0}M_{\dR}$.

For each choice of embeddings $\s:F\plonge\C$ and $\tau:E\plonge\C$, the resolution of the constant sheaf $\C$ by the de Rham complex yields a canonical comparison isomorphism of $\C[\Gal(\C/\R)]$-modules
\begin{equation}\label{EqIsoBettiDeRham}
I_{\s,\infty}:(M_{B,\s}\tenseur_{E}E_{\tau})\tenseur_{\R}\C\isocan M_{\dR}\tenseur_{F\tenseur E}\C
\end{equation}
where $F$ and $E$ are embedded in $\C$ through $\s$ and $\tau$ on the right-hand side and in which the action of $\Gal(\C/\R)$ is as in the previous paragraph on the left-hand side and on $\C$ on the right-hand side. This isomorphism sends $\sommedirecte{p\geq k}{}H^{p,q}_{\s,\tau}$ to $(\Fil^{k}M_{\dR})\tenseur_{F\tenseur E}\C$.

Fix an embedding $\tau:E\plonge\C$. The composition of the inverse of $I_{\s,\infty}$ with the map $\psi_{\s,\tau}$ for all $\s:F\plonge\C$ defines a map
\begin{equation}\label{EqCritical}
\psi_{\tau}:\left((\Fil^{0}M_{\dR})\tenseur_{F\tenseur E}E_{\tau}\right)\tenseur_{\R}\C\fleche\sommedirecte{\s:F\plonge\C}{}V_{\s,\tau}(-1)^{+}.
\end{equation}
The motivic data $M$ (or the system of realizations $\{\{M_{B,\s}\}_{\s},M_{\dR}\}$) is said to be critical in the sense of \cite{DeligneFonctionsL} if there exists a $\tau$ such that the map $\psi_{\tau}$ is an isomorphism (in which case the map $\psi_{\tau}$ is an isomorphism for all $\tau$).
\paragraph{Étale realizations}For all finite place $\pid$ of $E$, the $\pid$-adic étale realization of $M$ is the $E_{\pid}$-vector spaces\begin{equation}\nonumber
M_{\et,\pid}=H^{i}_{\et}(X\times_{F}\Fbar,\Q_{p}(j))(\epsi)
\end{equation}
with its natural action of $G_{F}$. For any finite set $\Sigma$ containing $\{v|p\}$, $M_{\et,\pid}$ defines a constructible étale sheaf on $\Spec\Ocal_{F}[1/\Sigma]$. The natural map $\nu_{v}:\Spec\Ocal_{F_{v}}\fleche\Spec\Ocal_{F}[1/\Sigma]$ if $v\notin\Sigma$ or $\nu_{v}:\Spec F_{v}\fleche\Spec\Ocal_{F}[1/\Sigma]$ for all finite $v$ allow us to consider $M_{\et,\pid}$ (more precisely $\nu_{v}^{*}M_{\et,\pid}$) as a constructible étale sheaf over $\Spec\Ocal_{F_{v}}$ or $\Spec F_{v}$ respectively.

For $v\notin\Sigma_{0}\cup\{v|p\}$ a finite place of $F$, the smooth and proper base change theorem implies that the $G_{F_{v}}$-representation $M_{\et,\pid}$ is unramified. By \cite{DeligneWeil}, the Euler factor 
\begin{equation}\label{EqDefEuler}
\Eul_{v}(M_{\et,\pid},X)\eqdef\det\left(1-\Fr(v)X|M_{\et,\pid}\right)\in E_{\pid}[X]
\end{equation}
has furthermore coefficients in $E$ and is independent of the choice of $\pid$ (here $X$ is an indeterminate unrelated to the scheme $X$ above).

Let $v|p$ and $\pid|p$ be finite places of $F$ and $E$ respectively. Let $B_{\dR,v}$ be the ring of $p$-adic periods relative to the extension $\Fbar_{v}/F_{v}$ (\cite{FontaineDeRham,FontaineCorps}) and denote by $D_{\dR,v}$ the functor 
\begin{equation}\nonumber
D_{\dR,v}:V\mapsto H^{0}(G_{F_{v}},V\tenseur_{\qp}B_{\dR,v})
\end{equation}
from the category of finite-dimensional $E_{\pid}$-vector spaces with a continuous action of $G_{F_{v}}$ to the category of $F_{v}\tenseur_{\qp}E_{\pid}$-module with a finite, decreasing filtration indexed by $\Z$. A finite-dimensional $E$-vector space $V$ equipped with a continuous action of $G_{F_{v}}$ is de Rham (\cite{FontaineSemistable}) if the dimension of $D_{\dR,v}(V)$ as $F_{v}$-vector space is equal to $[E_{\pid}:\qp]\dim_{E_{\pid}}V$. When $V$ is a de Rham representation, the cup-product with $\log(\chi_{\cyc})\in\Hun(G_{F_{v}},\qp)$ is an isomorphism
\begin{equation}\label{EqExpDual}
D_{\dR,v}^{i}(V)\eqdef H^{0}(G_{F_{v}},V\tenseur_{\qp}B_{\dR,v}^{i})\isom\Hun(G_{F_{v}},V\tenseur_{\qp}B_{\dR,v}^{i})
\end{equation}
for all $i\in\Z$. The proof of the semi-stable conjecture of J-M.Fontaine and U.Jannsen, which was completed in \cite{TsujiSemistable}, shows that the $G_{F_{v}}$-representation $M_{\et,\pid}$ is de Rham for all $v|p$.

For each choice of embedding $\bar{\s}:\Fbar\plonge\C$ extending $\s$, Artin comparison theorem (\cite[Exposé XI]{SGA4}) gives a canonical isomorphism
\begin{equation}\nonumber
I_{\bar{\s},\pid}:M_{B,\s}\tenseur_{E}E_{\pid}\isocan M_{\et,\pid}
\end{equation}
which is compatible with the action of $\Gal(\C/\R)$ if $\s$ is a real embedding. We fix a $\bar{\s}$ for each $\s:F\plonge\C$ and denote by $I_{\pid}$ the isomorphism 
\begin{equation}\nonumber
I_{\pid}=\sommedirecte{\s:F\plonge\C}{}I_{\bar{\s},\pid}:\sommedirecte{\s:F\plonge\C}{}M_{B,\s}\tenseur_{E}E_{\pid}\simeq\sommedirecte{\s:F\plonge\C}{} M_{\et,\pid}.
\end{equation}

By \cite{TsujiSemistable} (or \cite{FaltingsCrys,FaltingsAlmost}), there is a comparison isomorphism 
\begin{equation}\nonumber
I_{\dR,v,\pid}:M_{\dR}\tenseur_{F}B_{\dR,v}\simeq M_{\et,\pid}\tenseur_{\qp}B_{\dR,v}
\end{equation}
which is canonical by \cite{NiziolUniqueness} and which induces an isomorphism of filtered $E_{\pid}$-modules
\begin{equation}\nonumber
M_{\dR}\tenseur_{F\tenseur E}F_{v}\tenseur E_{\pid}\simeq D_{\dR,v}(M_{\et,\pid}).
\end{equation}
As the rank of $D_{\dR}(M_{\et,\pid})$ as $F_{v}\tenseur E_{\pid}$-module is equal to the rank of $M_{\dR}$ as $F\tenseur E$-module, it does not depend on $v|p$.

The composition of the direct sum of the localization maps
\begin{equation}\nonumber
\Hun_{\et}(\Ocal_{F}[1/\Sigma],M_{\et,\pid})\fleche\sommedirecte{v|p}{}\Hun_{\et}(\Spec F_{v},M_{\et,\pid})\simeq\sommedirecte{v|p}{}\Hun(G_{F_{v}},M_{\et,\pid})
\end{equation}
with the extension of scalar from $\Hun(G_{F_{v}},M_{\et,\pid})$ to $\Hun(G_{F_{v}},M_{\et,\pid}\tenseur_{\qp}B_{\dR,v}^{+})$ and the direct sum of the inverse of the maps \eqref{EqExpDual} defines a map
\begin{equation}\label{EqExp}
\sommedirecte{v|p}{}\exp_{v}^{*}:\Hun_{\et}(\Ocal_{F}[1/\Sigma],M_{\et,\pid})\fleche \sommedirecte{v|p}{}D_{\dR,v}^{0}(M_{\et,\pid})\simeq (\Fil^{0} M_{\dR})\tenseur_{E}E_{\pid}
\end{equation}
called the dual exponential map (\cite[Section 1.2]{KatoViaBdR}). The motivic data $M$ (or the system of realizations $\{M_{\et,\pid},M_{\dR}\}$) is said to be critical at $p$ if $\exp^{*}$ is an isomorphism.%
\paragraph{Rank}The rank of $M$ over $F$ is the common dimension of the $E$-vector spaces $M_{B,\s}$. This is also the rank of $M_{\dR}$ as $E\tenseur_{\Q}F$-module and, for all finite place $\pid$ of $E$, the dimension of $M_{\et,\pid}$ as $E_{\pid}$-vector space.
\paragraph{Dual}The dual motivic data of $(X,\epsi,i,j)$ is the motivic data $(X,\epsi,2d-i,d+1-j)$. The dual motive $M^{*}(1)$ of $M$ is the motive conjecturally attached to the dual motivic data. Unconditionally, the realizations of the dual motivic data are dual to the realizations of the motivic data $(X,\epsi,i,j)$. If the motivic data $(X,\epsi,i,j)$ is pure of weight $w$, then the dual motivic data is pure of weight $-w-2$. In particular, either $M$ or $M^{*}(1)$ is pure of weight $w\geq-1$.
\paragraph{Coefficients}It is conjectured that to a pair of motives $M,N$ over $F$ and with coefficients in $E$, there exists a tensor product motive $M\tenseur N$. The only instance of this construction we have cause to consider is the tensor product of a motive $M$ with the motive $h^{0}(\Spec K)$ attached to the pure motivic data $(\Spec K,1,0,0)$ where $K/F$ is a finite, abelian extension. Let $(X,\epsi,i,j)$ be a pure motivic data over $F$ with coefficients in $E$. There exists a pure motivic data $(X\times_{F}K,\epsi,i,j)$ over $F$ (and hence conjecturally a motive which we denote by $M\times_{F}K$ or $M_{K}$ for brevity) whose realizations are
\begin{equation}\nonumber
\begin{cases}
(M\times_{F}K)_{B,\s}=M_{B,\s}\tenseur_{E}E[G],\\
(M\times_{F}K)_{\dR}=M_{\dR}\tenseur_{F}K,\\
(M\times_{F}K)_{\et,p}=M_{\et,p}\tenseur_{E_{\pid}}E_{\pid}[G].
\end{cases}
\end{equation}
We refer to $M_{K}$ as the motive $M$ over $F$ with coefficients in $E[\Gal(K/F)]$.
\paragraph{Strictly critical motive}A motive $M$ is called strictly critical (with respect to $\tau$ and $\pid$ a finite place of $E$) if it satisfies the following three properties.
\begin{enumerate}\nonumber
\item The map $\psi_{\tau}:\left((\Fil^{0}M_{\dR})\tenseur_{F\tenseur E}E_{\tau}\right)\tenseur_{\R}\C\fleche\sommedirecte{\s:F\plonge\C}{}V_{\s,\tau}(-1)^{+}$ of \eqref{EqCritical}  is an isomorphism.
\item The dual exponential map $\sommedirecte{v|p}{}\exp_{v}^{*}:\Hun_{\et}(\Ocal_{F}[1/\Sigma],M_{\et,\pid})\fleche(\Fil^{0} M_{\dR})\tenseur_{E}E_{\pid}$ of \eqref{EqExp} is an isomorphism.
\item The cohomology of the complex $\RGamma_{\et}(\Ocal_{F}[1/\Sigma],M_{\et,\pid})$ vanishes outside of degree 1.
\end{enumerate}
Combining the weak Leopoldt's conjecture of \cite{PerrinRiouLpadique} and the conjecture of Bloch-Kato on the order of vanishing yields the following conjecture.
\begin{Conj}\label{ConjCritique}
Let $M$ be a critical motive over $F$ and let $F_{\infty}/F$ be the cyclotomic $\zp$-extension of $F$. Then $M_{K}$ is strictly critical for all finite sub-extensions $F\subset K\subset F_{\infty}$ except possibly finitely many. 
\end{Conj}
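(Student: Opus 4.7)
The plan is to treat the three defining properties of strict criticality separately, reducing each to an input coming either from the criticality of $M$, from Perrin-Riou's weak Leopoldt conjecture, or from the Bloch-Kato conjecture on the order of vanishing of motivic $L$-functions. By Shapiro's lemma, the complex $\RGamma_{\et}(\Ocal_{F}[1/\Sigma],(M_{K})_{\et,\pid})$ is canonically identified with $\RGamma_{\et}(\Ocal_{K}[1/\Sigma],M_{\et,\pid})$, so the whole problem becomes that of controlling the Galois cohomology of the \emph{fixed} representation $M_{\et,\pid}$ as $K$ ranges over the finite sub-extensions of the cyclotomic $\zp$-extension $F_{\infty}/F$, or equivalently as $\chi$ ranges over the finite-order characters of $\Gal(F_{\infty}/F)$.

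First I would check condition (1) for $M_{K}$. Under the tensor-product description of the realizations of $M_{K}$, both sides of the map $\psi_{\tau}$ split, via Fourier decomposition along $\widehat{\Gal(K/F)}$, as direct sums compatible with $\psi_{\tau}$, and each summand is the map $\psi_{\tau}$ of the twist $M\tenseur\chi$. Since every such $\chi$ is a finite-order character of the cyclotomic tower, it is even, unramified outside $p$, and of Hodge-Tate weight $0$, hence preserves the Hodge numbers of $M$; the criticality of $\psi_{\tau}$ for $M$ therefore propagates to $M_{K}$ without loss.

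Condition (3) has two parts. The vanishing of $H^{0}(G_{K,\Sigma},M_{\et,\pid})$ for all but finitely many $K$ is straightforward: the union $H^{0}(G_{F_{\infty},\Sigma},M_{\et,\pid})$ is a finite-dimensional $E_{\pid}$-vector space because $M_{\et,\pid}$ is pure of non-zero weight and its Hodge-Tate weights at $v|p$ are nonzero, and such an invariant space must stabilize and thus vanish at almost every finite layer. The vanishing of $H^{2}(G_{K,\Sigma},M_{\et,\pid})$ is the delicate part and is exactly what Perrin-Riou's weak Leopoldt conjecture predicts at infinity for the cyclotomic Iwasawa cohomology of $M_{\et,\pid}$; a standard control argument via the Hochschild-Serre spectral sequence for $\Gal(F_{\infty}/K)$ then propagates vanishing at infinity to vanishing at all but finitely many finite layers.

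Once (1) and (3) are in place, condition (2) reduces to a dimension count and an injectivity statement. The global Euler-Poincar\'e formula, combined with the vanishing of $H^{0}$ and $H^{2}$ at level $K$, computes $\dim_{E_{\pid}}\Hun(G_{K,\Sigma},M_{\et,\pid})$ purely in terms of local Hodge data at $v|p$ and at archimedean places; together with criticality at $p$ for $M$ and the preservation of Hodge numbers under cyclotomic twists, this forces the source and the target of $\sommedirecte{v|p}{}\exp_{v}^{*}$ for $M_{K}$ to have the same $E_{\pid}$-dimension, reducing the whole question to injectivity. Via the Bloch-Kato exact sequence, injectivity of $\sommedirecte{v|p}{}\exp_{v}^{*}$ is equivalent to the vanishing of $\Hunf(G_{K,\Sigma},M_{\et,\pid})$, which splits according to characters of $\Gal(K/F)$ and which, twist by twist on $M\tenseur\chi$, follows from the Bloch-Kato conjecture on the order of vanishing combined with the non-vanishing of $L(M\tenseur\chi,0)$ for all but finitely many $\chi$ in the cyclotomic tower. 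The main obstacle is of course that both deep inputs -- Perrin-Riou's weak Leopoldt conjecture and the Bloch-Kato conjecture on the order of vanishing -- are precisely what makes the statement a conjecture rather than a theorem, so the argument sketched above is really a reduction of strict criticality along the cyclotomic tower to these two standard conjectures, as anticipated in the formulation of \ref{ConjCritique}.
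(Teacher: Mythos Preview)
The statement in question is a \emph{conjecture}, and the paper offers no proof of it whatsoever. The only thing the paper says is the single sentence immediately preceding the statement: ``Combining the weak Leopoldt's conjecture of \cite{PerrinRiouLpadique} and the conjecture of Bloch-Kato on the order of vanishing yields the following conjecture.'' Your proposal is therefore not to be compared against any argument in the paper; rather, it is a fleshed-out version of that one-line remark, and in that sense it is entirely in line with the paper's intent.

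As a heuristic reduction your outline is reasonable and hits the right targets: weak Leopoldt for the vanishing of $H^{2}$ along the tower, Bloch--Kato on the order of vanishing for the vanishing of $\Hunf$ (hence injectivity of $\exp^{*}$), and Euler--Poincar\'e for the dimension count. Two small points are worth flagging. First, your argument for the vanishing of $H^{0}$ invokes ``pure of non-zero weight and Hodge--Tate weights nonzero'', but neither is part of the hypothesis that $M$ is critical; one should rather say that if $H^{0}(G_{F_{\infty},\Sigma},M_{\et,\pid})\neq 0$ then $M$ has a cyclotomic-by-finite subrepresentation, which is excluded for the motives of interest (or else fold this into the finitely many exceptional layers). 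Second, the non-vanishing of $L(M\tenseur\chi,0)$ for all but finitely many $\chi$ in the cyclotomic tower is itself a nontrivial expectation (a generic non-vanishing statement) that you are implicitly assuming alongside Bloch--Kato; this is consistent with the spirit of the conjecture but deserves to be named as an input.
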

\subsubsection{Weight-monodromy and independence of $\ell$}
\paragraph{Independence of $\ell$}The following conjecture (\cite[Conjecture C5]{SerreFacteurs}, see also \cite[Appendix]{SerreTate}) strengthens the independence property noted after \eqref{EqDefEuler}.
\begin{Conj}[Independendance of $\ell$]\label{ConjEll}
Let $\mathfrak q$ and $\mathfrak p$ be two rational primes and let $v\nmid\pid\qid$ be a finite place of $F$. Then the local Euler polynomials 
\begin{equation}\nonumber
\Eul_{v}(M_{\et,\pid},X)=\det\left(1-\Fr(v)X|M_{\et,\pid}^{I_{v}}\right),\ \Eul_{v}(M_{\et,\qid},X)=\det\left(1-\Fr(v)X|M_{\et,\qid}^{I_{v}}\right) 
\end{equation}
have coefficients in $E[X]$ and are equal.
\end{Conj}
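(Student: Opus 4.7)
The plan is to reduce Conjecture \ref{ConjEll} to the Weight-Monodromy Conjecture \ref{ConjWMC} applied to $M_{\et,\pid}$ viewed as a $G_{F_{v}}$-representation, combined with $\pid$-independence of the Frobenius trace system on the full $\pid$-adic realization.

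First, Grothendieck's $\ell$-adic monodromy theorem attaches to $M_{\et,\pid}|_{G_{F_{v}}}$ a Weil-Deligne representation $(V_{\pid},r_{\pid},N_{\pid})$ over $E_{\pid}$ with underlying space $V_{\pid}=M_{\et,\pid}$. The inertia invariants $M_{\et,\pid}^{I_{v}}$ together with the $\Fr(v)$-action on them are entirely recovered from this data, so $\Eul_{v}(M_{\et,\pid},X)$ depends only on the Frobenius-semisimplification of $(V_{\pid},r_{\pid},N_{\pid})$, which decomposes canonically as $\bigoplus_{i}\operatorname{Sp}(n_{i})\otimes\sigma_{i}$ with $\sigma_{i}$ irreducible representations of $W_{F_{v}}$. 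It therefore suffices to show that these Frobenius-semisimplifications descend to a common Weil-Deligne representation over $E$, independent of $\pid$.

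Second, the Weight-Monodromy Conjecture \ref{ConjWMC} asserts that $(V_{\pid},r_{\pid},N_{\pid})$ is pure, and it is a standard fact in Weil-Deligne representation theory that a pure Frobenius-semisimple Weil-Deligne representation is determined by the sequence $\{\tr(\Fr(v)^{n}|V_{\pid})\}_{n\geq 1}$: the multiset of pairs $(n_{i},\sigma_{i}^{\operatorname{ss}})$ is extracted from the Frobenius trace on $V_{\pid}$ by isolating eigenvalues of prescribed weight, and $N_{\pid}$ is then read off from the resulting weight filtration. The problem is thereby reduced to proving that this trace sequence lies in $E$ and is $\pid$-independent. Choosing a realizing quadruple $(X,\varepsilon,i,j)$ for $M$, one passes to a finite extension $F'/F$ in which $X$ acquires strictly semistable reduction at a place $w|v$, using de Jong's alterations and transporting $\varepsilon$ along the resulting correspondence. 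The Rapoport-Zink weight spectral sequence then writes $\tr(\Fr(w)^{n}|V_{\pid})$ as an alternating sum of traces on cohomology of smooth proper strata of the special fibre of a semistable model; each summand is by the Grothendieck-Lefschetz formula a $\pid$-independent point count, and Galois averaging recovers the $F$-level trace from its $F'$-counterpart.

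The main obstacle is unquestionably the Weight-Monodromy Conjecture itself, which remains open in general though known unconditionally for abelian varieties (via SGA 7), for large classes of Shimura varieties, and for the automorphic motives most relevant to the body of this manuscript via local-global compatibility. Absent \ref{ConjWMC}, the plan above yields only the $\pid$-independence of the characteristic polynomial of $\Fr(v)$ on all of $V_{\pid}$, which is strictly weaker than the sought-for statement about inertia invariants.
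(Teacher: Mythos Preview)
The statement you are attempting to prove is labeled \emph{Conjecture} in the paper, and the paper offers no proof whatsoever: immediately after stating it, the author only records the special cases in which it is known (varieties of dimension at most $2$ via \cite{SaitoWeightIndependence}, abelian schemes via \cite{SGA7}, strict $1$-motives via \cite{RaynaudMonodromie}). There is therefore nothing in the paper to compare your argument against, and you should not expect to produce an unconditional proof.

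Your proposal is not a proof but a reduction of one open conjecture to two others, and you are candid about this in your final paragraph. The two inputs you need are (a) the Weight-Monodromy Conjecture~\ref{ConjWMC} for every $\pid$, and (b) the $\pid$-independence of the characteristic polynomial of $\Fr(v)$ on the \emph{full} space $M_{\et,\pid}$. You correctly flag that (a) is open. But (b) is also open in general, and your sketch for it is where the argument genuinely breaks. De~Jong's theorem produces an alteration $X'\to X$ with semistable reduction, not a semistable model of $X$ itself; the Rapoport--Zink spectral sequence then computes $H^{i}(X'_{\bar F'})$, and passing back to the direct summand $H^{i}(X_{\bar F})$ cut out by a correspondence does not obviously preserve $\pid$-independence of the Frobenius polynomial (knowing $\det(1-\Fr X\mid W)$ is $\pid$-independent for a space $W$ says nothing a priori about $\det(1-\Fr X\mid eW)$ for a projector $e$). ``Transporting $\varepsilon$ along the resulting correspondence'' hides exactly this difficulty. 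In short, even granting purity, your second input is itself a hard open instance of independence of $\ell$, so the reduction is circular in strength.

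It is worth noting that the paper already packages the joint content of your two inputs as Conjecture~\ref{ConjIllusie}, precisely because neither is known to imply the other and both are open.
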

This conjecture is known to hold for any $i$ if the dimension of $X$ is at most 2 (\cite{SaitoWeightIndependence}). It also holds for any $i$ if $X$ is an abelian scheme (\cite[Exposé IX]{SGA7}) and more generally if $M$ is a strict 1-motive (\cite{RaynaudMonodromie}). Hence, it holds for general $X$ if $i\leq 1$.
\paragraph{Weight-Monodromy} In \cite[Principe 2.1, 8.1]{DeligneHodge}, \cite[Page 41]{RapoportZink} and especially \cite[Conjecture 3.9]{IllusieMonodromie}, the following property of $M$ is also conjectured.
\begin{Conj}[Weight-Monodromy conjecture]\label{ConjWMC}
Let $v\nmid p$ be a finite place and let $\s(v)$ be a lift of $\Fr(v)$ to $G_{F_{v}}$. Then the semisimple Weil-Deligne representation attached to the $G_{F_{v}}$-action on $M_{\et,\pid}$ for $\pid|p$ is pure of weight $w=i-2j$ in the sense that the eigenvalues of $\s(v)$ on the $r$-th graded piece of the monodromy filtration of $M_{\et,p}$ are Weil numbers of weight $w+r$.
\end{Conj}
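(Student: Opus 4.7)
The plan is the standard reduction to strictly semistable reduction followed by analysis via the Rapoport-Zink weight spectral sequence, but let me say upfront that this statement is Deligne's Weight-Monodromy Conjecture and it remains open in general. I would begin by passing to a finite extension $F'_v/F_v$, under which the conclusion is unchanged because restriction of the Weil-Deligne representation to an open subgroup of the Weil group preserves purity and shifts the monodromy filtration trivially. I would then invoke de Jong's theorem on alterations to find a proper, surjective, generically finite morphism $X' \fleche X_{F_v}$ such that $X'$ extends to a regular proper scheme $\Xcal'$ over $\Ocal_{F'_v}$ whose special fibre is a strict normal crossings divisor with smooth components. Since $M_{\et,\pid}$ embeds, by functoriality of $\epsi$ and the trace map, as a direct summand of some $H^{i}_{\et}(X'_{\Fbar_v}, \Q_\ell)$, it suffices to prove purity of the Weil-Deligne representation attached to the latter.

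For the semistable model $\Xcal'$, the Rapoport-Zink weight spectral sequence computes the nearby cycles $R\Psi\Q_\ell$ with $E_1$-terms built out of Tate twists of the $\ell$-adic cohomology of smooth intersections of the components of the special fibre. By Deligne's theorem (Weil II), these $E_1$-terms carry natural weights, and the monodromy operator $N$ acts through an explicit one-step shift of the $E_1$-page, which determines the monodromy filtration on $H^{i}_{\et}(X'_{\Fbar_v}, \Q_\ell)$. The Weight-Monodromy Conjecture then reduces to the assertion that for every $r\geq 0$, the iterated operator $N^r$ induces an isomorphism from the $r$-th to the $-r$-th graded piece of the monodromy filtration, which is equivalent to the non-degeneracy of certain cup-product pairings on the cohomologies of the smooth strata of the special fibre.

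The main obstacle—and the reason the conjecture remains genuinely open—is precisely establishing this non-degeneracy in mixed characteristic. In the equicharacteristic (function field) analogue, Deligne settled the question in Weil II via the Hard Lefschetz theorem applied to the components of the special fibre. In mixed characteristic, the known cases rely on geometric inputs that are available only in restricted settings: Grothendieck's theorem in SGA 7 for abelian schemes (and more generally $1$-motives, by Raynaud), the results of Rapoport-Zink and T. Saito for $\dim X \leq 2$, and Scholze's breakthrough for complete intersections in smooth projective toric varieties using perfectoid tilting to reduce to the equicharacteristic case. My working strategy would be to attempt to extend the perfectoid-tilting approach beyond the toric setting, reducing mixed-characteristic Weight-Monodromy for $h^{i}(X)$ to its equicharacteristic counterpart via some form of $p$-adic uniformization of $\Xcal'$; but this is precisely the step where no argument is currently known in general, and I would not expect to surmount it. For the purposes of the present paper, the pragmatic move is to retain the conjecture as a stated hypothesis and note that it is unconditional under the assumptions already listed (e.g.\ when $i \leq 1$, or when $\dim X \leq 2$, or in any of the automorphic settings of interest where it follows from local-global compatibility).
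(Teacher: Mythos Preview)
Your assessment is correct: the paper does not prove this statement at all---it is explicitly labelled as a \emph{Conjecture} (Conjecture~\ref{ConjWMC}) and is used throughout as a standing hypothesis, not as a result to be established. There is no ``paper's own proof'' to compare against. Your outline of the standard reduction (alterations to strict semistable reduction, the Rapoport--Zink spectral sequence, and the identification of the remaining obstruction) is accurate, as is your survey of the known cases (abelian schemes via SGA~7, $1$-motives via Raynaud, surfaces via Saito, complete intersections in toric varieties via Scholze). The paper itself only records the cases $\dim X\leq 2$ and $i\leq 1$ in the paragraph following Conjecture~\ref{ConjEll}, and elsewhere simply \emph{assumes} purity at motivic points as part of the definition of a motivic specialization.
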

In \cite[Question 3.13.2]{IllusieMonodromie}, it is asked whether conjectures \ref{ConjEll} and \ref{ConjWMC} admit the following common generalization.
\begin{Conj}[Weight-Monodromy and independence of $\ell$]\label{ConjIllusie}
Let $v\nmid p$ be a finite place and let $\s(v)$ be a lift of $\Fr(v)$ to $G_{F_{v}}$. Then $\det(1-\s(v)X|\Gr_{r}^{M}M_{\et,\pid})$ has coefficients in $E$ independent of the choice of $\pid$ and all its roots are Weil numbers of weight $w+r$.
\end{Conj}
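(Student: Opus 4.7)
The plan is to combine the Rapoport--Zink weight spectral sequence with Deligne's resolution of the Weil conjectures (\emph{Weil II}), in order to identify the graded pieces of the monodromy filtration on $M_{\et,\pid}$ with $\pid$-adic étale cohomology groups of smooth proper schemes over the residue field at $v$, for which both the independence of $\pid$ and the purity properties are known.

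First I would reduce to the case where $X$ admits a strictly semi-stable model at $v$. For this, after possibly replacing $F$ by a finite extension $F'/F$ (which replaces $\s(v)$ by a power and so is harmless for the conclusion), I would invoke de Jong's theorem on alterations to find a proper hypercovering $X_{\bullet}\to X$ whose terms admit strictly semi-stable models over $\Ocal_{F'_{w}}$ for $w\,|\,v$. The realization $M_{\et,\pid}$ is then recovered from those of the $X_{n}$ by a spectral sequence on which the idempotent $\epsi$ acts functorially, so it suffices to establish Conjecture~\ref{ConjIllusie} for each $h^{i}(X_{n})(j)$.

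Second, in the strictly semi-stable case, I would appeal to the Rapoport--Zink weight spectral sequence, whose $E_{1}$-page is an explicit direct sum of Tate-twisted étale cohomology groups of intersections of irreducible components of the special fiber and whose abutment is $M_{\et,\pid}$ together with its nilpotent monodromy operator $N$. This spectral sequence furnishes a natural \emph{candidate} filtration on $M_{\et,\pid}$ whose graded pieces are literally cohomology groups of smooth proper varieties over the residue field of $v$. Applying \emph{Weil II} to these graded pieces would then yield both that the eigenvalues of $\s(v)$ are Weil numbers of the predicted weights $w+r$ and that the characteristic polynomials $\det(1-\s(v)X\,|\,\Gr_{r})$ have coefficients in $E$ independent of the choice of $\pid$.

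The main obstacle, and the reason Conjecture~\ref{ConjIllusie} remains open in the generality stated, is that the candidate filtration produced by the Rapoport--Zink spectral sequence coincides with the monodromy filtration of $N$ if and only if the spectral sequence degenerates at $E_{2}$, which is \emph{precisely} the content of the Weight--Monodromy conjecture~\ref{ConjWMC} in the strictly semi-stable case. The reduction above therefore buys nothing beyond the semi-stable case of Conjecture~\ref{ConjWMC}, a problem solved by Deligne in equal characteristic zero, known unconditionally for abelian schemes, strict $1$-motives, surfaces (\cite{SaitoWeightIndependence}), and certain Shimura varieties, and established in equal positive characteristic by Scholze via perfectoid methods, but for which I have no genuine mixed-characteristic strategy to propose beyond hoping that a sufficiently robust $p$-adic comparison theorem might allow the transfer of the equal-characteristic statement into the $p$-adic setting.
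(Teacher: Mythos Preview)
The statement you are addressing is labeled as a \emph{Conjecture} in the paper and is not proved there; it is presented as a common generalization of Conjectures~\ref{ConjEll} and~\ref{ConjWMC}, attributed to a question of Illusie. So there is no ``paper's own proof'' to compare against.

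Your proposal is not a proof either, and you say so yourself: after outlining the Rapoport--Zink/de Jong strategy, you correctly observe that the degeneration of the weight spectral sequence at $E_{2}$ \emph{is} the Weight--Monodromy conjecture in the semi-stable case, so the argument is circular. This diagnosis is accurate and is exactly why the statement is recorded as a conjecture. A couple of attributions in your final paragraph are garbled, however: Deligne's result (Weil II) is in \emph{equal positive} characteristic (function fields), not equal characteristic zero; and Scholze's perfectoid approach establishes cases of weight--monodromy in \emph{mixed} characteristic (for complete intersections in toric varieties), not in equal positive characteristic. These should be corrected, but they do not affect your main point, which is that no unconditional mixed-characteristic argument is known in the generality of the conjecture.
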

\subsubsection{Compatible systems of Galois representations}
For $G$ a topological group, a $G$-representation $(T,\rho,R)$ with coefficients in $R$ is an $R$-module $T$ free of finite rank with a continuous morphism
\begin{equation}\nonumber
\rho:G\fleche\Aut_{R}(T).
\end{equation}
Let $L_{\pid}/\qp$ be a finite extension and let $(V,\rho,L_{\pid})$ be a $G_{F}$-representation. Then $V$ is said to be geometric if it is unramified outside a finite set of prime and if $\rho|G_{F_{v}}$ is de Rham for all $v|p$. It is said to be motivic if it is geometric and if it satisfies conjecture \ref{ConjWMC}. It is said to be rational over $E$ if there exists a number field $E$ such that $\det(1-\rho(\Fr(v))X|V^{I_{v}})$ belongs to $E[X]$ for all $v\nmid p$ and it is said to be rational if it is rational over $E$ for some number field $E$. A representation $V$ which is rational over $E$ is said to be part of a compatible system of representations if there exists a finite set $\Sigma$ such that  for all finite place $\qid$ of $E$, there exists an $E$-rational $G_{F}$-representation $(V_{\qid},\rho_{\qid},E_{\qid})$ unramified outside $\Sigma\cup\{\qid\}$ such that $V_{\pid}$ is equal to $V$ and such that the polynomials $\det(1-\rho(\Fr(v))X|V)$ and $\det(1-\rho_{\qid}(\Fr(v))X|V_{\qid})$ are equal for all $v\notin\Sigma\cup\{\pid,\qid\}$ except possibly finitely many. The representation $V$ is said to come from algebraic geometry if it arises as the $p$-adic étale realization of a motivic data $(X,\epsi,i,j)$ over $F$ with coefficients in $E$ and it is said to come from a motive if it comes from algebraic geometry and if the motivic data $(X,\epsi,i,j)$ satisfies conjecture \ref{ConjIllusie}.

These definitions and the properties of realizations of motives recalled above imply the following statements. A representation which comes from a motive comes from algebraic geometry, is rational and belongs to a compatible system of representations. A representation which comes from algebraic geometry is geometric. Conjecture \ref{ConjEll} states that a representation which comes from algebraic geometry is rational and belongs to a compatible system of representations and conjecture \ref{ConjWMC} states that a representation that comes from algebraic geometry is motivic.

In the reverse direction, we record the following two conjectures.
\begin{Conj}[Fontaine-Mazur]\label{ConjFM}
An irreducible, geometric representation comes from algebraic geometry.
\end{Conj}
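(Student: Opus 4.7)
The statement recorded here is the Fontaine--Mazur conjecture, one of the central open problems of arithmetic geometry, and I make no pretense of actually proving it; instead I describe the strategy that has succeeded in the two-dimensional case over $\Q$ (Kisin, Emerton, building on Khare--Wintenberger and Taylor--Wiles) and in various higher-rank or higher-degree partial situations, and indicate where the obstructions lie.

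The plan is to reduce geometricity of an irreducible geometric $\rho : G_{F} \fleche \GL_{n}(L_{\pid})$ to its \emph{automorphy}, i.e. the existence of a regular algebraic cuspidal automorphic representation $\pi$ of $\GL_{n}(\A_{F})$ with $\rho \simeq \rho_{\pi,\pid}$. Once automorphy is known, the construction of $\pid$-adic Galois representations attached to cohomological cuspidal automorphic representations, obtained via the $\pid$-adic \'etale cohomology of suitable (often compact PEL or Shimura) varieties by the work of Shin, Chenevier--Harris, Caraiani and others, realizes $\rho$ as a direct summand of some $H^{i}_{\et}(X \times_{F} \Fbar, \Qbar_{p}(j))$, which is precisely what it means in the terminology of section~\ref{SubMotives} for $\rho$ to come from algebraic geometry.

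The argument for automorphy itself proceeds in two stages. First one exhibits a residual modularity input: a congruence $\bar\rho \simeq \bar\rho_{0}$ with a representation known to be automorphic, obtained either from Serre's conjecture (known for $\GL_{2}/\Q$ by Khare--Wintenberger) or, in higher rank or over larger base fields, from a potential automorphy theorem of Taylor and his collaborators, followed by cyclic base change descent when available. Second, one applies a modularity lifting theorem of Taylor--Wiles--Kisin type: the geometricity hypothesis (that $\rho$ is unramified outside a finite set and de Rham at every $v | p$) selects a well-defined component of a potentially semistable deformation ring, on which an $R = T$ theorem identifies the universal deformation ring of $\rho$ with a localized Hecke algebra and thereby forces $\rho$ to be automorphic.

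The hard parts are well documented. The residual input is delicate outside $\GL_{2}/\Q$: potential automorphy only produces automorphy over an auxiliary CM or solvable extension, and descending it while preserving residual irreducibility is a serious problem. The local deformation theory at $v | p$, particularly in the non-ordinary case with non-regular Hodge--Tate weights or in rank $\geq 3$, rests on detailed geometric information about crystalline and potentially semistable deformation rings (Kisin, Emerton--Gee, Breuil--M\'ezard) that is incomplete in general. Finally, the Taylor--Wiles patching method genuinely requires residual (absolute) irreducibility, so the residually reducible case must be attacked by fundamentally different techniques (Skinner--Wiles, eigenvarieties, boundary of deformation spaces); this is the principal obstruction that keeps the conjecture open in full generality.
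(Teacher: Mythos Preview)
The paper does not prove this statement; it records it as a conjecture (attributed to \cite[Conjecture 1]{FontaineMazur}) and uses it only to relate the classical and motivic loci of $\Spec\Lambda[1/p]$. There is nothing in the paper to compare your attempt against.

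Your response is appropriate for what the statement actually is: you correctly identify it as open in general, outline the automorphy-plus-realization strategy that has succeeded for $\GL_{2}/\Q$, and name the genuine obstructions (residual input outside $\GL_{2}/\Q$, local deformation theory at $p$ in higher rank or with non-regular weights, and the residually reducible case). That is exactly the right thing to do when asked to ``prove'' a famous conjecture. One small refinement: you might note that even in the cases where automorphy is known, the passage from automorphy to ``comes from algebraic geometry'' in the precise sense of the paper (a summand of $H^{i}_{\et}(X\times_{F}\Fbar,\Qbar_{p}(j))(\epsi)$ for an honest projective smooth $X$) is itself not automatic in all situations, since some constructions of $\rho_{\pi,\pid}$ proceed via congruences or $p$-adic limits rather than direct geometric realization.
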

This is \cite[Conjecture 1]{FontaineMazur}.
\begin{Conj}\label{ConjTaylor}
A compatible system of irreducible representations comes from algebraic geometry.
\end{Conj}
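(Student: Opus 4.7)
The plan is to combine the Fontaine-Mazur conjecture \ref{ConjFM} with global and local automorphic methods. First I would reduce Conjecture \ref{ConjTaylor} to showing that some (equivalently any, by the compatibility) member $V_{\pid}$ of the system $\{V_{\qid}\}_{\qid}$ is de Rham at all primes above $p$: granted this, $V_{\pid}$ is geometric in the sense of the paper and \ref{ConjFM} identifies it as the étale realization of a motivic data $(X,\epsi,i,j)$ over $F$ with coefficients in $E$. Note that the statement only asks for ``coming from algebraic geometry'', so the Weight-Monodromy upgrade (Conjecture \ref{ConjWMC}, or more precisely \ref{ConjIllusie}) need not be established for this proof.

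Next I would attack the de Rham property through automorphy. A \v{C}ebotarev-density argument exploiting that the characteristic polynomials $\det(1-\rho_{\qid}(\Fr(v))X|V_{\qid}^{I_{v}})$ all lie in a fixed $E[X]$ tightly constrains the residual representations $\bar{\rho}_{\pid}$ for almost all $\pid$. For a density-one set of such $\pid$ I would seek to prove residual automorphy of $\bar{\rho}_{\pid}$ by matching its residual image with that of a cohomological automorphic representation on the unit group of a quaternion algebra over a totally real field, or on a totally definite unitary group $\G$ over a CM field of the kind used in this paper. Combining a Taylor-Wiles-Kisin patching argument with known potential automorphy theorems (Harris-Shepherd-Barron-Taylor, Barnet-Lamb-Gee-Geraghty-Taylor, and the ten-author paper) one would then deduce that after restriction to a suitable solvable extension $F'/F$, $V_{\pid}$ occurs in the étale cohomology of a Shimura variety attached to $\G$, hence is de Rham and in fact motivic. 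Descent from $F'$ back to $F$ is carried out by Brauer induction applied to the virtual character of $\Gal(F'/F)$, which is compatible with tensor products, direct sums, and direct summands at the level of realizations.

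The main obstacle is that essentially every step in the above plan depends on currently open conjectures or on automorphy machinery available only under heavy restrictions (regular Hodge-Tate weights, essentially conjugate self-duality in rank $>2$, large residual image, oddness, and so on). In particular, producing even a single de Rham member of a general compatible system is open, and for that reason Conjecture \ref{ConjTaylor} is expected to be at least as deep as Conjecture \ref{ConjFM} itself. The plan as stated would yield unconditional results only in regimes where both the Taylor-Wiles-Kisin method and Fontaine-Mazur are within reach, most notably two-dimensional odd systems over totally real fields and essentially conjugate self-dual regular systems over CM fields; a genuinely new idea seems to be required to move beyond this.
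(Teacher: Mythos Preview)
The paper does not prove this statement: Conjecture~\ref{ConjTaylor} is stated as an open conjecture, with the sole justification being the sentence ``Conjecture~\ref{ConjTaylor} seems to be widely believed, see \cite[Conjecture~1.3]{TaylorGalois} for a reference.'' There is therefore no proof in the paper to compare your proposal against.

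Your write-up is not a proof but a survey of the standard heuristic route (reduce to de Rham, invoke Fontaine--Mazur, establish de Rham via potential automorphy and modularity lifting), and you yourself correctly flag in the final paragraph that every step is conditional on deep open input. That self-assessment is accurate. One point worth sharpening: your first reduction assumes Conjecture~\ref{ConjFM}, so even granting the de Rham step you would only have shown that Conjecture~\ref{ConjTaylor} follows from Conjecture~\ref{ConjFM}, not that it holds outright. Since the paper treats both as independent open conjectures and does not claim any implication between them, this is already more than what the paper asserts. As a proof of the conjecture, the proposal has a genuine gap precisely where you locate it: producing a de Rham member of an arbitrary compatible system is not known, and without it nothing gets off the ground.
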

Conjecture \ref{ConjTaylor} seems to be widely believed, see \cite[Conjecture 1.3]{TaylorGalois} for a reference. 
\subsection{$L$-functions of motives and their special values}
\subsubsection{$L$-functions of motives}
Let $K/F$ be an abelian extension with Galois group $G$. Let $M$ be a rank $n$ motive over $F$ with coefficients in $E$ which is pure of weight $w$ and let $M_{K}$ be the motive $M$ with coefficients in $E[\Gal(K/F)]$. Let $\Sigma$ be a finite set of primes of $\Ocal_{F}$ containing $\{v|p\}$. Choose $\pid\nmid 2$ a finite place of $E$ and denote by $\Ocal$ the ring of integer of $E_{\pid}$. For simplicity, we sometimes write $M_{K,\pid}$ for $M_{K,\et,\pid}$. Finally, fix $\tau:E\plonge\C$ an embedding.

The $\Sigma$-partial $\Gal(K/F)$-equivariant $L$-function $L_{\Sigma}(M_{K},s)$ is the formal partial Euler product
\begin{equation}\nonumber
L_{\Sigma}(M_{K},s)=\produit{v\notin\Sigma}{}\frac{1}{\Eul_{v}(M_{K,\pid},(N_{F/\Q}v)^{-s})}=\produit{v\notin\Sigma}{}\frac{1}{\det(1-\Fr(v)X|M_{K,\pid}^{I_{v}})_{X=(N_{F/\Q}v)^{-s}}}
\end{equation}
in which the product is taken over the finite places of $F$.

Let $\tau_{\chi}\in\Hom(E[G],\C)$ be the morphism obtained by extending $\tau$ by linearity through a complex character $\chi\in\hat{G}$. Under our assumption that conjecture \ref{ConjEll} holds, each Euler factor of $L_{\Sigma}(M,s)$ is a polynomial in $(N_{F/\Q}v)^{-s}$ with coefficients in $E[G]$ and thus can be considered as a polynomial with coefficients in $\C$ through $\tau_{\chi}$. Hence $L_{\Sigma}(M,s)$ and the choice of $\chi$ define a complex Euler product which converges to a non-zero holomorphic complex function on the half-plane $\Re(s)>A$ for $A$ sufficiently large (more precisely, on the half-plane $\Re(s)>1+w/2$ under our assumption that conjecture \ref{ConjWMC} holds) and which is equal to an absolutely convergent Dirichlet $L$-series on this half-plane.

Removing the choice of $\chi$ allows us to consider $L_{\Sigma}(M,s)$ as a function on the half-plane $\Re(s)>1+w/2$ with values in $\C[G]$ and further removing the choice of $\tau$ defines a function with values in $E\tenseur\C[G]$. These functions are conjectured to admit a meromorphic continuation to $\C$ satisfying a function equation relating $L_{\Sigma}(M,s)$ and $L_{\Sigma}(M^{*}(1),-s)$ (see \cite{SerreFacteurs,FontaineValeursSpeciales,FontainePerrinRiou,BurnsFlachMotivic} for this conjecture in various degree of generality).

We denote by $L^{*}_{\Sigma}(M^{*}(1),0)\in E\tenseur\C[G]$ the first non-zero term in the Taylor expansion of $L_{\Sigma}(M^{*}(1),s)$ about zero and call it the analytic special value of the $L$-function of $M^{*}(1)$ with Euler factors for primes in $\Sigma$ removed. Because the knowledge of this special value is equivalent to the knowledge of this value after embedding $E$ in $\C$ through all possible embeddings, we henceforth consistently regard $E$ as a subfield of $\C$ though $\tau$.
\subsubsection{Motivic fundamental lines}\label{SubFun}
\paragraph{Complex fundamental line}A \emph{complex fundamental line} of $M_{K}$ is a projective $\C[\Gal(K/F)]$-module $\Delta_{\Sigma}(M_{K})\tenseur_{E,\tau}\C$ of rank 1 equipped with a canonical isomorphism 
\begin{equation}\nonumber
\per_{\C}:\Delta_{\Sigma}(M_{K})\tenseur_{E,\tau}\C\isocan\C[\Gal(K/F)]
\end{equation}
which is equivariant for the action of $\Gal(K/F)$ on the left and right-hand side. Following the fundamental ideas of \cite{DeligneFonctionsL,BeilinsonConjecture} (in the critical and general case respectively), conjectural definitions of complex fundamental lines have been proposed in increasing generality in \cite{BlochKato,FontainePerrinRiou,KatoHodgeIwasawa}.
\paragraph{$p$-adic fundamental line}Let $T\subset M_{K,\pid}$ be a $G_{F}$-stable $\Ocal$-lattice inside the $\pid$-adic étale realization of $M$. Because $E_{\pid}$ and $\Ocal$ are regular local rings and because Shapiro's lemma yields an isomorphism
\begin{equation}\nonumber
\RGamma_{\et}(\Ocal_{K}[1/\Sigma],T)\simeq\RGamma_{\et}(\Ocal_{F}[1/\Sigma],T\tenseur_{\Ocal}\Ocal[G])
\end{equation} of complexes of $\Ocal[G]$-modules, the complexes
\begin{equation}\nonumber
M_{K,\pid}(-1)^{+},\ \RGamma_{\et}(\Ocal_{F}[1/\Sigma],M_{K,\pid}),\ (T(-1)\tenseur_{\Ocal}\Ocal[G])^{+}, \RGamma_{\et}(\Ocal_{K}[1/\Sigma],T)
\end{equation}
are all perfect complexe (of $E_{\pid}[G]$-modules for the first two and $\Ocal[G]$-modules for the last two). Hence, there exists a projective $\Ocal[G]$-module of rank 1
\begin{equation}\nonumber
\Det^{-1}_{\Ocal[G]}\RGamma_{\et}(\Ocal_{K}[1/\Sigma],T)\tenseur_{\Ocal[G]}\produittenseur{\s:F\plonge\C}{}\Det^{-1}_{\Ocal[G]}(T(-1)\tenseur_{\Ocal}\Ocal[G])^{+}
\end{equation}
inside a projective $E_{\pid}[G]$-module of rank 1
\begin{equation}\nonumber
\Det^{-1}_{E_{\pid}[G]}\RGamma_{\et}(\Ocal_{F}[1/\Sigma],M_{K,\pid})\tenseur_{E_{\pid}[G]}\produittenseur{\s:F\plonge\C}{}\Det^{-1}_{E_{\pid}[G]}M_{K,\pid}(-1)^{+}.
\end{equation} 
The $\Ocal[G]$-lattice defined by the former in the latter is independent of the choice of $T$ by Tate's formula for $\Spec\Ocal_{F}[1/\Sigma]$-cohomology with $p$-torsion coefficients (\cite[Theorem 2.2]{TateBSD}).

A \emph{$p$-adic fundamental line} of $M_{K}$ is a projective $E_{\pid}[\Gal(K/F)]$-module $\Delta_{\Sigma}(M_{K})\tenseur_{E}E_{\pid}$ or rank 1 equipped with a canonical isomorphism
\begin{equation}\nonumber
\per_{\pid}:\Delta_{\Sigma}(M_{K})\tenseur_{E}E_{\pid}\isocan\Det^{-1}_{E_{\pid}[G]}\RGamma_{\et}(\Ocal_{F}[1/\Sigma],M_{K,\pid})\tenseur_{E_{\pid}[G]}\produittenseur{\s:F\plonge\C}{}\Det^{-1}_{E_{\pid}[G]}M_{K,\pid}(-1)^{+}
\end{equation}
which is equivariant for the action of $\Gal(K/F)$ on the left and right-hand side.

Note that if $(V,\rho,E_{\pid})$ is a $G_{F,\Sigma_{0}}$-representation for some finite set $\Sigma_{0}\supset\{v|p\}$, then 
\begin{equation}\nonumber
\Delta_{\Sigma}(V/K)\eqdef\Det^{-1}_{E_{\pid}[G]}\RGamma_{\et}(\Ocal_{F}[1/\Sigma],V\tenseur_{E_{\pid}}E_{\pid}[G])\tenseur_{E_{\pid}[G]}\produittenseur{\s:F\plonge\C}{}\Det^{-1}_{E_{\pid}[G]}(V\tenseur_{E_{\pid}}E_{\pid}[G])(-1)^{+}
\end{equation}
is a $p$-adic fundamental line for the identity period map.
\paragraph{Motivic fundamental line}\label{SubMotivicFundamental}Finally, a \emph{motivic fundamental line} of $M_{K}$ is a triplet $$(\Delta_{\Sigma}(M_{K}),\per_{\C},\per_{\pid})$$
where $\Delta_{\Sigma}(M_{K})$ is a projective $E[\Gal(K/F)]$-module $\Delta_{\Sigma}(M_{K})$ of rank 1 such that $\Delta_{\Sigma}(M_{K})\tenseur_{E}\C$ equipped with $\per_{\C}$ is a complex fundamental line and such that $\Delta_{\Sigma}(M_{K})\tenseur_{E}E_{\pid}$ equipped with $\per_{p}$ is a $p$-adic fundamental line. 

Let $M_{K}$ be a strictly critical motive. Then the motivic fundamental line together with the period maps can be explicitly and unconditionally defined as we now recall. Set 
\begin{equation}\nonumber
\Delta_{\Sigma}(M_{K})\eqdef\Det_{E[G]} \Fil^{0}M_{K,\dR}\tenseur_{E[G]}\produittenseur{\s:F\plonge\C}{}\Det^{-1}_{E}M_{K,B,\s}(-1)^{+}.
\end{equation}
Then $\Delta_{\Sigma}(M_{K})$ is endowed with a $G$-equivariant isomorphism
\begin{equation}\nonumber
\per_{\C}:\Delta_{\Sigma}(M_{K})\tenseur_{E[G]}\C[G]\simeq\C[G]
\end{equation}
induced by the isomorphism \eqref{EqCritical} and with a $G$-equivariant isomorphism
\begin{equation}\nonumber
\per_{\pid}:\Delta_{\Sigma}(M_{K})\tenseur_{E[G]}E_{\pid}[G]\simeq\Det^{-1}_{E_{\pid}[G]}\RGamma_{\et}(\Ocal_{F}[1/\Sigma],M_{K,\pid})\tenseur_{E_{\pid}[G]}\produittenseur{\s:F\plonge\C}{}\Det^{-1}_{E_{\pid}[G]}M_{K,\pid}(-1)^{+}
\end{equation}
induced by the isomorphism \eqref{EqExp}. Hence, the triplet $(\Delta_{\Sigma}(M_{K}),\per_{\C},\per_{\pid})$ is a motivic fundamental line. Note the important fact that the definition of the motivic fundamental line and the normalization of the period isomorphisms make no reference to the set $\Sigma$.
\paragraph{Remark:}The main references in the study of special values of $L$-functions of motives (\cite{DeligneFonctionsL,BeilinsonConjecture,BlochKato,FontainePerrinRiou,KatoHodgeIwasawa,BurnsFlach}) all define a motivic fundamental line or an object playing a similar role as well as period maps. However, the objects they consider and especially the period maps attached to them are not always (known to be) the same. It follows from conjectures of J.Tate, S.Bloch and A.Beilinson (\cite{TateConjecture,BeilinsonConjecture}) that the algebraic $K$-theory groups of the proper, smooth variety $X$ define a motivic fundamental line (see especially \cite[Conjecture 2.4.2.1]{BeilinsonConjecture} for the case $X$ admits a proper, regular model over $\Ocal_{F}$). In \cite{BlochCycles}, higher Chow groups are considered and in \cite{FontainePerrinRiou}, the fundamental line is constructed from extensions in the category of mixed motives. To avoid ambiguity, we always normalize the fundamental line and the period maps as above in the strictly critical case. As explained below, conjecture \ref{ConjCritique} and the behavior of weights in $p$-adic families of automorphic representations ensure that defining the fundamental line in the strictly critical case is often enough to define it in general.
\paragraph{$p$-adic fundamental line revisited}
Let $\Sigma\supset\{v|p\}$ be a finite set of finite places and let $\Lambda$ be a semi-local ring whose local factors are rings of mixed characteristic $(0,p)$. It is convenient to expand the definition of a $p$-adic fundamental line slightly in two related directions: first, to the objects in the category $D_{\ctf}(\Spec\Ocal_{F}[1/\Sigma],\Lambda)$ of perfect complexes of constructible étale sheaves of $\Lambda$-modules on $\Spec\Ocal_{F}[1/\Sigma]$  following \cite[Section 3.1]{KatoViaBdR}; second, to $G_{F,\Sigma_{0}}$-representations with coefficients in integral domains.
\begin{DefEnglish}\label{DefFundamental}
Let $\Fcali$ be an object of $D_{\ctf}(\Spec\Ocal_{F}[1/\Sigma],\Lambda)$. The $\Lambda$-adic fundamental line of $\Fcali$ over $K$ is the projective $\Lambda[G]$-module of rank 1
\begin{equation}\nonumber
\Delta_{\Sigma}(\Fcali/K)\eqdef\Det^{-1}_{\Lambda[G]}\RGamma_{\et}(\Ocal_{F}[1/\Sigma],\Fcali\Ltenseur_{\Lambda}\Lambda[G])\tenseur_{\Lambda[G]}\produittenseur{\s:F\plonge\C}{}\Det^{-1}_{\Lambda[G]}(\Fcali\Ltenseur_{\Lambda}\Lambda[G])(-1)^{+}.
\end{equation}
\end{DefEnglish}
Let $(T,\rho,\Lambda)$ be a $G_{F,\Sigma_{0}}$-representation with coefficients in a characteristic zero, semi-local ring $\Lambda$. For all finite set $\Sigma_{0}\supset\Sigma\supset\{v|p\}$ of finite places, $T$ gives rise to an étale sheaf $\Fcali$ on $\Spec\Ocal_{F}[1/\Sigma]$. By construction, $\Fcali$ is then an object of  $D_{\ctf}(\Spec\Ocal_{F}[1/\Sigma_{0}],\Lambda)$. The excision sequence in étale cohomology (\cite[p. 535]{MazurEtale}) reduces the claim that it belongs to $D_{\ctf}(\Spec\Ocal_{F}[1/\Sigma],\Lambda)$, and hence the claim that it is a suitable input for definition \ref{DefFundamental}, to the claim that it belongs to $D_{\ctf}(\Spec\Ocal_{F_{v}},\Lambda)$ for all $v\in\Sigma_{0}\backslash\Sigma$. This holds in particular if $\Lambda$ has finite global homological dimension by the theorem of Auslander-Buchsbaum and Serre.

Suppose now that $\Lambda$ is an integral domain with field of fraction $\Lcal$ and that $T^{I_{v}}$ is of finite projective dimension as $\Lambda$-module for all $v\in\Sigma_{0}\backslash\Sigma$. Denote by $V$ the $G_{F,\Sigma_{0}}$-representation $T\tenseur_{\Lambda}\Lcal$. Assume furthermore for simplicity that for all $v\in\Sigma_{0}\backslash\Sigma$, the eigenvalues of $\Fr(v)$ in its action on $T^{I_{v}}$ are distinct from 1 and belong to $\Lambda$ (as discussed in \ref{SubTwo} below, the latter condition is expected to be true when $T$ comes from a $\Lambda$-adic family of automorphic representations of a reductive group $\G$ split at $v\in\Sigma_{0}\backslash\Sigma$). 
\begin{Prop}\label{PropBienDef}
For all $v\in\Sigma_{0}\backslash\Sigma$, denote by $d_{v}$ the rank of $T^{I_{v}}$ and by $\Eul_{v}(T,1)\in\Lambda_{d_{v}}[X]$ the polynomial $\det(1-X\Fr(v)|V^{I_{v}})$. There is a canonical isomorphism 
\begin{equation}\label{EqExcision}
\Delta_{\Sigma}(T/F)\isocan\Delta_{\Sigma_{0}}(T/F)\tenseur\produittenseur{v\in\Sigma_{0}\backslash\Sigma}{}\left(\Det_{\Lambda}\RGamma_{\et}(F_{v},T)\tenseur_{\Lambda}\Det^{-1}_{\Lambda}\frac{\Lambda}{\Eul_{v}(T,1)\Lambda}\right)
\end{equation}
induced by the excision sequence and a lifting of $1-\Fr(v)$ to a projective resolution of $T^{I_{v}}$ and which is compatible with the natural isomorphism $\RGamma_{\et}(\Ocal_{F_{v}},T)\simeq \Lambda^{d_{v}}/(1-\Fr(v))\Lambda^{d_{v}}$ when $T^{I_{v}}$ is a free $\Lambda$-module on which $\Fr(v)$ acts without the eigenvalue 1.
\end{Prop}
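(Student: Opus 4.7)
The plan is to combine the standard excision triangle of étale cohomology with a determinant-level identification of the local unramified factor at each $v \in \Sigma_{0}\backslash\Sigma$. Writing $U = \Spec\Ocal_{F}[1/\Sigma]$ and $U_{0} = \Spec\Ocal_{F}[1/\Sigma_{0}]$, the open-closed decomposition with closed complement $Z = \Sigma_{0}\backslash\Sigma$, together with henselian excision at each $v \in Z$, produces a distinguished triangle
\[
\RGamma_{\et}(U, T) \longrightarrow \RGamma_{\et}(U_{0}, T) \longrightarrow \bigoplus_{v \in Z}\Cone\bigl(\RGamma_{\et}(\Ocal_{F_{v}}, T) \longrightarrow \RGamma_{\et}(F_{v}, T)\bigr)\overset{+1}{\longrightarrow}
\]
in the derived category of $\Lambda$-modules. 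The finite projective-dimension hypothesis on $T^{I_{v}}$, together with the discussion of $D_{\ctf}(\Spec\Ocal_{F}[1/\Sigma],\Lambda)$ preceding the proposition, ensures that each term is perfect and that Knudsen-Mumford determinants are defined throughout.

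Taking inverse determinants of the triangle above and noting that the Betti factor $\produittenseur{\s:F\plonge\C}{}\Det^{-1}_{\Lambda}T(-1)^{+}$ is common to $\Delta_{\Sigma}(T/F)$ and $\Delta_{\Sigma_{0}}(T/F)$ immediately produces the canonical isomorphism
\[
\Delta_{\Sigma}(T/F) \isocan \Delta_{\Sigma_{0}}(T/F) \tenseur \bigotimes_{v \in Z}\bigl(\Det_{\Lambda}\RGamma_{\et}(F_{v}, T) \tenseur \Det^{-1}_{\Lambda}\RGamma_{\et}(\Ocal_{F_{v}}, T)\bigr).
\]
To match the stated formula it remains, for each $v \in Z$, to construct a canonical isomorphism $\Det^{-1}_{\Lambda}\RGamma_{\et}(\Ocal_{F_{v}}, T) \simeq \Det^{-1}_{\Lambda}(\Lambda/\Eul_{v}(T, 1)\Lambda)$ from a lifting of $1-\Fr(v)$. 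Identifying $\RGamma_{\et}(\Ocal_{F_{v}}, T)$ with the two-term complex $[T^{I_{v}} \xrightarrow{1-\Fr(v)} T^{I_{v}}]$ of unramified cohomology, I would choose a bounded resolution $P^{\bullet} \twoheadrightarrow T^{I_{v}}$ by finitely generated projective $\Lambda$-modules together with a chain endomorphism $\tilde f$ of $P^{\bullet}$ lifting $1-\Fr(v)$; the mapping cone of $\tilde f$ is a perfect model of $\RGamma_{\et}(\Ocal_{F_{v}}, T)$. By invariance under chain homotopy and the hypothesis on the eigenvalues of $\Fr(v)$, the determinant $\det_{\Lambda}(\tilde f) \in \Lambda$ is well-defined and equals $\Eul_{v}(T, 1)$, and the multiplicativity of Knudsen-Mumford determinants along the cone of $\tilde f$ compared to the rank-one presentation $[\Lambda \xrightarrow{\Eul_{v}(T, 1)} \Lambda]$ of $\Lambda/\Eul_{v}(T, 1)\Lambda$ yields the desired isomorphism.

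I expect the main obstacle to be the verification that the isomorphism so constructed is (i) independent of the choices of $P^{\bullet}$ and of the lift $\tilde f$, and (ii) reduces, in the free case where $T^{I_{v}} \simeq \Lambda^{d_{v}}$ and $1$ is not an eigenvalue of $\Fr(v)$, to the explicit module-theoretic presentation $\Lambda^{d_{v}}/(1-\Fr(v))\Lambda^{d_{v}}$ asserted at the end of the statement. Both facts are formal consequences of the standard properties of the Knudsen-Mumford determinant functor and of the classical fact that, for any $d \times d$ matrix $M$ over the integral domain $\Lambda$ with nonzero determinant $f$, the cokernel $\Lambda^{d}/M\Lambda^{d}$ has Knudsen-Mumford determinant canonically identified with that of $\Lambda/f\Lambda$; nonetheless, correctly tracking the canonical pairing $\Det_{\Lambda}P^{\bullet} \tenseur \Det^{-1}_{\Lambda}P^{\bullet} \simeq \Lambda$ so that the Euler factor $\Eul_{v}(T, 1)$ appears (and not its inverse) is the most delicate part of the argument.
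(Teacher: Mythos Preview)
Your proposal is correct and follows essentially the same route as the paper's proof: excision gives the factorization $\Delta_{\Sigma}\simeq\Delta_{\Sigma_{0}}\otimes\prod_{v}(\Det\RGamma_{\et}(F_{v},T)\otimes\Det^{-1}\RGamma_{\et}(\Ocal_{F_{v}},T))$, one identifies $\RGamma_{\et}(\Ocal_{F_{v}},T)$ with $[T^{I_{v}}\xrightarrow{1-\Fr(v)}T^{I_{v}}]$, lifts $1-\Fr(v)$ through a projective resolution of $T^{I_{v}}$, and then reduces to the determinant identity $\Det^{-1}[M\xrightarrow{f}M]\isocan\Det^{-1}[\Lambda\xrightarrow{\det f}\Lambda]$ for $M$ free and $f$ generically invertible. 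The only difference is packaging: where you invoke general Knudsen--Mumford multiplicativity and the classical cokernel-determinant identity, the paper observes that both complexes become acyclic over $\Frac(\Lambda)$ and appeals to \cite[Lemma~1]{BurnsFlachTate} for the canonical trivialization; your phrase ``$\det_{\Lambda}(\tilde f)\in\Lambda$ is well-defined'' is slightly loose (it is really the induced isomorphism of determinant lines that is canonical, not an element of $\Lambda$), but your final paragraph shows you understand this.
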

\begin{proof}
Fix $v\in\Sigma_{0}\backslash\Sigma$ and denote by $P_{v}\in\Lambda_{d_{v}}[X]$ the polynomial $\det(1-X\Fr(v)|V^{I_{v}})$. As $\RGamma_{\et}(\Ocal_{F_{v}},T)$ is computed by the complex $[T^{I_{v}}\overset{1-\Fr(v)}{\fleche}T^{I_{v}}]$ placed in degree 0 and 1, it is enough to show that there is a canonical isomorphism
\begin{equation}\nonumber
\Det_{\Lambda}^{-1}[T^{I_{v}}\overset{1-\Fr(v)}{\fleche}T^{I_{v}}]\isocan\Det^{-1}_{\Lambda}[\Lambda\overset{P_{v}(1)}{\fleche}\Lambda].
\end{equation}
A choice of projective resolution of $T^{I_{v}}$, the lifting property of projective modules and the extension of the determinant functor from the category of projective $\Lambda$-modules to the category of perfect complexes of $\Lambda$-modules reduces this problem to the existence of a canonical isomorphism
\begin{equation}\label{EqCanIsoEtape}
C_{1}=\Det^{-1}_{\Lambda}[M\overset{f}{\fleche}M]\isocan\Det^{-1}_{\Lambda}[\Lambda\overset{\det f}{\fleche}\Lambda]=C_{2}
\end{equation}
where $M$ is a free $\Lambda$-module and $f$ acts without the eigenvalue 0 on $M\tenseur_{\Lambda}\Lcal$. As the complexes $C_{1}$ and $C_{2}$ both become acyclic after tensor product with $\Lcal$, the existence of the isomorphism \eqref{EqCanIsoEtape} follows from \cite[Lemma 1]{BurnsFlachTate} (but with opposite sign conventions).
\end{proof}
\begin{DefEnglish}
For all finite place $v\nmid p$, let $\Eul_{v}(T,1)$ be $\det(1-\Fr(v)|V^{I_{v}})$.
\end{DefEnglish}
If we now relax now the hypothesis that $T^{I_{v}}$ is a $\Lambda$-module of finite projective dimension for all $v\notin\Sigma$, $\Fcali$ might not belong to $D_{\ctf}(\Spec\Ocal[1/\Sigma],\Lambda)$. Note, however, that $\Fcali$ nevertheless belongs to $D_{\ctf}(\Spec\Ocal[1/\Sigma_{0}],\Lambda)$ so that the right-hand side of \eqref{EqExcision} remains meaningful. Hence,  it is tempting to take it as definition of the $p$-adic fundamental line.
\begin{DefEnglish}\label{DefFundamentalBis}
Let $\Lambda$ be a noetherian, integral domain. Let $(T,\rho,\Lambda)$ be a $G_{F,\Sigma_{0}}$-representation, let $\Sigma_{0}\supset\Sigma\supset\{v|p\}$ be a finite set finite places and let $K/F$ be an abelian extension with Galois group $G$ which is unramified outside $\Sigma$. For $v\notin\Sigma$, assume that $\Eul_{v}(T/K,1)$ belongs to $\Lambda[G]$. The $\Lambda$-adic fundamental line of $T$ is the rank 1, projective $\Lambda[G]$-module
\begin{equation}\nonumber
\Delta_{\Sigma}(T/K)\eqdef\Delta_{\Sigma_{0}}(T/K)\tenseur\produittenseur{v\in\Sigma_{0}\backslash\Sigma}{}\left(\Det_{\Lambda[G]}\RGamma(F_{v},T\tenseur_{\Lambda}\Lambda[G])\tenseur_{\Lambda[G]}\Det^{-1}_{\Lambda[G]}\frac{\Lambda[G]}{\Eul_{v}(T/K,1)\Lambda[G]}\right).
\end{equation}
\end{DefEnglish} 
Proposition \ref{PropBienDef} ensures that the $\Lambda$-adic fundamental line is well-defined when both definitions \ref{DefFundamental} and \ref{DefFundamentalBis} apply.%
\subsubsection{The Equivariant Tamagawa Number Conjecture}
The following conjecture, called the $p$-part of the Equivariant Tamagawa Number Conjecture, is due to S.Bloch and K.Kato (\cite{BlochKato}) when $K=F$ and to \cite{KatoHodgeIwasawa} in general. It has been further generalized to non-abelian extension $K$ in \cite{BurnsFlach,FukayaKato}. The presentation given here is inspired by \cite{FontaineValeursSpeciales,KatoHodgeIwasawa,FontainePerrinRiou,BurnsFlachMotivic}. \begin{Conj}[Equivariant Tamagawa Number Conjecture]\label{ConjTNC}
For all finite set $\Sigma\supset\{p\}$ of finite places of $\Ocal_{F}$ and all finite abelian extension $K$ of $F$ unramified outside $\Sigma$ with Galois group $G$, there exists a motivic zeta element $\z_{\Sigma}(M_{K})$ which is a basis of a motivic fundamental line $\Delta_{\Sigma}(M_{K})$ verifying the following properties.
\begin{enumerate}
\item\label{ItC} The image of $\z_{\Sigma}(M_{K})\tenseur1$ through $\per_{\C}$ is equal to $L^{*}_{\Sigma}(M_{K}^{*}(1),0)$.
\item\label{ItP} The $\Ocal[G]$-lattice $\Ocal[G]\cdot\per_{\pid}(\z_{\Sigma}(M_{K})\tenseur1)$ inside
\begin{equation}\nonumber
\Det^{-1}_{E_{\pid}[G]}\RGamma_{\et}(\Ocal_{F}[1/\Sigma],M_{K,\et,\pid})\tenseur_{E_{\pid}[G]}\produittenseur{\s:F\plonge\C}{}\Det^{-1}_{E_{\pid}[G]}M_{K,\et,\pid}(-1)^{+}
\end{equation}
is equal to 
\begin{equation}\nonumber
\Delta_{\Sigma}(T/K)=\Det^{-1}_{\Ocal[G]}\RGamma_{\et}(\Ocal_{F}[1/\Sigma],T\tenseur_{\Ocal}\Ocal[G])\tenseur_{\Ocal[G]}\produittenseur{\s:F\plonge\C}{}\Det^{-1}_{\Ocal[G]}(T\tenseur_{\Ocal}\Ocal[G])(-1)^{+}
\end{equation}
for any choice of $G_{F}$-stable $\Ocal$-lattice $T$ inside $M_{K,\et,\pid}$.
\item\label{ItCor} Let $L/F$ be a finite abelian unramified outside $\Sigma$ and containing $K$. Then the natural map
\begin{equation}\nonumber
\Delta_{\Sigma}(M_{L})\tenseur_{E[\Gal(L/F)]}E[\Gal(K/F)]\fleche\Delta_{\Sigma}(M_{K})
\end{equation} 
induced by corestriction is an isomorphism sending $\z_{\Sigma}(M_{L})\tenseur1$ to $\z_{\Sigma}(M_{K})$.
\end{enumerate}
\end{Conj}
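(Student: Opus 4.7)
The statement to be attacked is a conjecture of essentially unbounded depth, so what I can realistically sketch is the strategy one would pursue rather than a complete argument. I would break the task into three intertwined subproblems: construct a candidate motivic zeta element $\z_{\Sigma}(M_{K})$, check its image under the complex period map (the analytic special value formula, item \ref{ItC}), and check the $p$-adic integrality and corestriction properties (items \ref{ItP} and \ref{ItCor}). Using conjecture \ref{ConjCritique} together with the explicit description of $\Delta_{\Sigma}(M_{K})$ recalled in section \ref{SubMotivicFundamental}, I would first reduce to the strictly critical case, where the motivic fundamental line is $\Det_{E[G]}\Fil^{0}M_{K,\dR}\otimes\bigotimes_{\s}\Det^{-1}_{E}M_{K,B,\s}(-1)^{+}$ and both $\per_{\C}$ and $\per_{\pid}$ are canonically defined through \eqref{EqCritical} and \eqref{EqExp}; this sidesteps the need to invoke the conjectural category of mixed motives or higher $K$-theory regulators.

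Next I would try to build $\z_{\Sigma}(M_{K})$ as the bottom layer of a norm-compatible system over the cyclotomic $\zp$-extension, so that property \ref{ItCor} is automatic by construction. In the accessible cases (cyclotomic units, elliptic units, Beilinson--Kato elements, Heegner cycles, and their Rankin--Selberg analogues) the candidate element arises from an explicit algebraic construction --- a pushforward of a motivic cohomology class through Chern character or syntomic realization --- whose image under the Bloch--Kato dual exponential map \eqref{EqExp} is computed in terms of $L$-values. Property \ref{ItC} then reduces to an explicit period computation: one compares the image under \eqref{EqIsoBettiDeRham} of the $\Fil^{0}$-component with the analytic Dirichlet series, following the strategy of Deligne in the critical case or of Beilinson otherwise.

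The genuine obstacle is the $p$-adic integrality statement \ref{ItP}, which concerns the $\Ocal[G]$-lattice cut out by $\Delta_{\Sigma}(T/K)$ inside the rational fundamental line. The standard line of attack is to interpret both sides in Iwasawa-theoretic terms over a $\zp^{d}$-extension of $F$, reducing \ref{ItP} to an Iwasawa Main Conjecture stating the equality of two characteristic ideals: one attached to an appropriate \Nekovar-Selmer complex, the other to the $p$-adic $L$-function interpolating the $\per_{\C}$-values. One divisibility is typically extracted from an Euler system argument of Kolyvagin--Kato--Rubin type built on the zeta element constructed above; the reverse divisibility requires congruences between automorphic forms in the spirit of Ribet--Wiles--Skinner--Urban. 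The hard part is that these two inputs are available only in a handful of special cases, and producing them simultaneously for a general pure motive with nontrivial coefficient field $E$ and nontrivial $\Gal(K/F)$-action is wide open.

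For this reason, rather than attempting a frontal assault I would follow the strategy implicit in the paper: prove an analogous $\Lambda$-adic statement for a $p$-adic family $T$ over a suitable Hecke algebra using theorem \ref{TheoIntro}, then descend to the motivic specializations. The control-type theorem gives a canonical isomorphism $\Delta_{\Sigma}(T/K)\otimes_{\Lambda,\psi}\Ocal\isocan\Delta_{\Sigma}(T_{\psi}/K)$ for every motivic specialization $\psi$, so a single $\Lambda$-adic zeta element produces simultaneously the zeta elements at all motivic points of $\Spec\Lambda$, and \ref{ItC}, \ref{ItP}, \ref{ItCor} propagate to each of them. Finally, one must be mindful of theorem \ref{TheoDeux}: at non-motivic specializations the formulation of \ref{ItP} from \cite{KatoViaBdR} is expected to fail, so before any proof can be attempted in that generality the conjecture itself must be reformulated, presumably using completed cohomology as sketched in the introduction.
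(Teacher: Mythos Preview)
The statement you have been asked to prove is a \emph{conjecture}, not a theorem, and the paper does not prove it: conjecture \ref{ConjTNC} is the Equivariant Tamagawa Number Conjecture, one of the central open problems in arithmetic geometry, and it is stated in the paper precisely as an open conjecture with attribution to Bloch--Kato and Kato. There is therefore no ``paper's own proof'' to compare your proposal against.

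That said, your proposal is not a proof either, and you correctly acknowledge this at the outset. What you have written is a reasonable outline of the known strategies for attacking the ETNC in special cases (Euler systems for one divisibility, automorphic congruences for the other, reduction to Iwasawa Main Conjectures), together with the observation that the paper's own results (theorems \ref{TheoPositif} and \ref{TheoAutomorphe}) establish the \emph{compatibility} of the conjecture with specialization at motivic points rather than the conjecture itself. This is an accurate reading of what the paper actually does: it proves that \emph{if} a $\Lambda$-adic zeta element exists and generates the $\Lambda$-adic fundamental line, \emph{then} its specializations generate the fundamental lines at motivic points (corollary \ref{CorIntro}), but it does not construct such an element or verify the period relations in general.

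The one place where your sketch overreaches is the phrase ``so that property \ref{ItCor} is automatic by construction'': norm-compatibility of a candidate system does give \ref{ItCor} once the elements are shown to be bases of the fundamental lines, but that last clause is exactly the content of \ref{ItP}, so you have not separated the difficulties as cleanly as the phrasing suggests. Otherwise your summary of the landscape is sound; just be clear that it is a research programme, not a proof.
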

Briefly put, the conjecture asserts that the two bases of the motivic fundamental line given respectively by the inverse image of $L^{*}_{\Sigma}(M_{K}^{*}(1),0)\in\C[\Gal(K/F)]$ and by the inverse image of the $p$-adic fundamental line $\Delta_{\Sigma}(T/K)$ coincide. Note also that if either assertion \ref{ItC} or \ref{ItP} is known to hold for $K$ and $L$, then assertion \ref{ItCor} admits an explicit formulation in terms of $L$-functions or $p$-adic fundamental lines.
\subsubsection{The generalized Iwasawa Main Conjecture}
The momentous generalization of conjecture \ref{ConjTNC} carried over in \cite{KatoViaBdR} applies to perfect complexes of constructible étale sheaves of $\Lambda$-modules on schemes $X$ where $\Lambda$ is a $p$-adic ring and $X$ is a finite scheme over $\Spec\Z[1/p]$. In this manuscript, we restrict ourselves for simplicity to the étale sheaves of $\Lambda$-modules on $\Spec\Ocal_{F}[1/\Sigma]$ which arise from irreducible $\Lambda$-adic $G_{F}$-representations.

Let $\Lambda$ be a semi-local, characteristic zero, noetherian ring all whose local factors have residual characteristic $p$. Let $\Sigma_{0}$ be a finite set of finite primes of $\Ocal_{F}$ containing $\{v|p\}$. Let $(T,\varrho,\Lambda)$ be a $G_{F,\Sigma_{0}}$-representation. For $\psi:\Lambda\fleche S$ a ring morphism, the specialization of $T$ at $\psi$ is the $G_{F,\Sigma_{0}}$-representation $T_{\psi}\eqdef T\tenseur_{\Lambda,\psi}S$. A specialization is said to be regular if $\psi$ has values in a characteristic zero, regular, semi-local ring. 

We assume that there exists a non-empty subset $\Spec^{\mot}\Lambda[1/p]\subset\Spec\Lambda[1/p]$, called the motivic locus, such that for all $x\in\Spec^{\mot}\Lambda[1/p]$ with residual field $\kbold(x)$, the fiber $V_{x}\eqdef T\tenseur_{\Lambda}\kbold(x)$ is an irreducible, motivic $G_{F}$-representation. Let $\Spec^{\cl}\Lambda[1/p]\subset\Spec\Lambda[1/p]$ be the subset of points $x$ with residual field $\kbold(x)$ such that the fiber $V_{x}$ comes from algebraic geometry. A specialization $\psi$ or $T_{\psi}$ is motivic (resp. classical) if $\psi$ is of the form $\psi_{x}:\Lambda\fleche\kbold(x)$ with $x\in\Spec^{\mot}\Lambda[1/p]$ (resp. with $x\in\Spec^{\cl}\Lambda[1/p]$), in which case $T_{\psi}$ is equal to $V_{x}$. Conjecture \ref{ConjFM} states that $\Spec^{\mot}\Lambda[1/p]$ is a subset of $\Spec^{\cl}\Lambda[1/p]$ and that a sufficient condition for the two sets to coincide is that all de Rham specializations $T_{\psi}$ of $T$ be irreducible. By construction, motivic and classical specializations are regular.

Fix a finite set $\Sigma\supset\{v|p\}$ of finite places. A specialization $\psi:\Lambda\fleche S$ with values in a characteristic zero ring is said to be $\Sigma$-brittle if either the étale sheaf $\Fcali_{\psi}$ attached to $T_{\psi}$ is an object of $D_{\ctf}(\Spec\Ocal_{F}[1/\Sigma],\Lambda)$ or if $S$ is an integral domain. Regular specializations are $\Sigma$-brittle for all $\Sigma$ and all specializations are $\Sigma$-brittle if $\Sigma\supset\Sigma_{0}$. Let $T$ be a $G_{F,\Sigma_{0}}$-specialization such that the identity is $\Sigma$-brittle. Using definition \ref{DefFundamentalBis} if necessary, the \textit{generalized Iwasawa Main Conjecture} below (\cite[Conjecture 3.2.2]{KatoViaBdR}) applies to $T$. 
\begin{Conj}[Kato's general Iwasawa Main Conjecture]\label{ConjETNC}
For all finite abelian extension $K/F$, all finite set $\Sigma\supset\{v|p\}$ of finite places of $F$ containing all the places of ramification of $K$ and all $\Sigma$-brittle specializations $\psi:\Lambda\fleche S$, there exist an $S[\Gal(K/F)]$-equivariant fundamental line $\Delta_{\Sigma}(T_{\psi}/K)$ and an $S[\Gal(K/F)]$-equivariant zeta element $\z_{\Sigma}(T_{\psi}/K)$ satisfying the following properties.
\begin{enumerate}[label=$\operatorname{(\roman{*})}$]
\item\label{ItDef} The fundamental line $\Delta_{\Sigma}(T_{\psi}/K)$ is the rank 1, projective $S[G]$-module
\begin{equation}\nonumber
\Delta_{\Sigma}(T_{\psi}/K)=\Det^{-1}_{S[G]}\RGamma_{\et}(\Ocal_{K}[1/\Sigma],T_{\psi})\tenseur_{S[G]}\produittenseur{\s:F\plonge\C}{}\Det^{-1}_{S[G]}T_{\psi}(-1)^{+}
\end{equation}
and $\z_{\Sigma}(T_{\psi}/K)$ is a basis of $\Delta_{\Sigma}(T_{\psi}/K)$.
\item\label{ItCores} Let $F\subset K\subset L$ be finite, abelian extensions of $F$. Let $\Sigma'$ be a finite set of finite places of $L$ as above and let $\Sigma$ be the set of finite places of $K$ below the places in $\Sigma'$. Then the natural isomorphism
\begin{equation}\nonumber
\Delta_{\Sigma'}(T_{\psi}/L)\tenseur_{S[\Gal(L/F)]}S[\Gal(K/F)]\isocan\Delta_{\Sigma}(T_{\psi}/K)
\end{equation}
induced by corestriction sends $\z_{\Sigma'}(T_{\psi}/L)\tenseur1$ to $\z_{\Sigma}(T_{\psi}/K)$.
\item\label{ItemSpec} For all brittle specialization $\psi:\Lambda\fleche S$, the natural map of $S[\Gal(K/F)]$-modules
\begin{equation}\nonumber
\Delta_{\Sigma}(T/K)\tenseur_{\Lambda,\psi}S\fleche\Delta_{\Sigma}(T_{\psi}/K)
\end{equation}
is an isomorphism sending $\z_{\Sigma}(T/K)\tenseur1$ to $\z_{\Sigma}(T_{\psi}/K)$.
\item Let $x\in\Spec^{\cl}\Lambda[1/p]$ with values in $E_{\pid}=\kbold(x)$ be a motivic point attached to the motivic data $M(x)$ with coefficients in a number field $E$. There is a canonical isomorphism
\begin{equation}\nonumber
\Delta_{\Sigma}(T_{\psi_{x}}/K)\isocan\Delta_{\Sigma}(M(x)_{K})\tenseur_{E}E_{\pid}
\end{equation}
sending $\z_{\Sigma}(T_{\psi_{x}}/K)\tenseur1$ to the $p$-adic zeta element $\z_{\Sigma}(M(x)_{K})\tenseur1$.
\item For all $x\in\Spec^{\cl}\Lambda[1/p]$, conjecture \ref{ConjTNC} holds for $M(x)_{K}$.
\end{enumerate}
\end{Conj}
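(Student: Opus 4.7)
The plan is to establish items~\ref{ItDef} through~(iv) of the conjecture under the hypotheses of Theorem~\ref{TheoIntro}, leaving item~(v) as the classical Tamagawa Number Conjecture at motivic points. For item~\ref{ItDef}, I would use Definition~\ref{DefFundamental} when the \'etale sheaf attached to $T_{\psi}$ lies in $D_{\ctf}(\Spec\Ocal_{F}[1/\Sigma],S)$ (for instance when $S$ is a regular semi-local ring), and Definition~\ref{DefFundamentalBis} otherwise; the latter requires $\Eul_{v}(T/K,1)\in\Lambda[G]$ for every $v\notin\Sigma$, a condition that hypotheses~\ref{HypMain} are designed to enforce by forcing the Weight--Monodromy filtration of the universal representation to be defined and to vary in the expected way. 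Item~\ref{ItCores} then reduces to Shapiro's lemma and the compatibility of $\Det$ with flat base change, which is essentially formal.

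The heart of the matter is item~\ref{ItemSpec}. I would factor the natural specialization map
\begin{equation}\nonumber
\Delta_{\Sigma}(T/K)\tenseur_{\Lambda,\psi}S\longrightarrow\Delta_{\Sigma}(T_{\psi}/K)
\end{equation}
using the excision decomposition of Proposition~\ref{PropBienDef} and analyze it place by place. The global term involving $\RGamma_{\et}(\Ocal_{F}[1/\Sigma_{0}],T)$ commutes with derived base change, so the entire question localizes at places $v\in\Sigma_{0}\setminus\Sigma$. There, one must check that both the local complex $\RGamma(F_{v},T)$ and the correction factor $\Lambda/\Eul_{v}(T,1)\Lambda$ specialize compatibly to their counterparts for $T_{\psi}$. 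The key input is that a pure monodromic specialization preserves the Weight--Monodromy filtration together with its monodromy operator, so that $T_{\psi}^{I_{v}}$ is free of the expected rank and the Frobenius characteristic polynomial on $T_{\psi}^{I_{v}}$ equals the image under $\psi$ of the corresponding polynomial on $T^{I_{v}}$. Item~(iv) then follows: once the \'etale side is known to specialize, one identifies $\Delta_{\Sigma}(T_{\psi}/K)$ with the motivic fundamental line of Section~\ref{SubMotivicFundamental} via the comparison isomorphisms $I_{\bar{\s},\pid}$ and $I_{\dR,v,\pid}$ recalled in Section~\ref{SubMotives}, reducing the general motivic case to the strictly critical one using Conjecture~\ref{ConjCritique}.

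The main obstacle, and the reason the positive results cannot be stated for arbitrary specializations, is the local analysis at $v\in\Sigma_{0}\setminus\Sigma$ when monodromy \emph{drops} under $\psi$. If $T_{\psi}^{I_{v}}$ is strictly larger than the specialization of $T^{I_{v}}$, then the natural map on \'etale cohomology fails to be an isomorphism as foreseen in~\eqref{EqErreur}, and nothing in the Betti factor $\bigotimes_{\s}\Det^{-1}_{\Lambda}T(-1)^{+}\tenseur_{\Lambda,\psi}S$ of the fundamental line can absorb the discrepancy. This is precisely the obstruction underlying Theorem~\ref{TheoDeux}, so that item~\ref{ItemSpec} in the full generality stated in~\cite{KatoViaBdR} cannot be expected to hold; the remedy advocated in the introduction is to replace the Betti factor by completed cohomology, whose Local Langlands content is expected to supply the missing Euler factor at such places.
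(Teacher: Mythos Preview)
The statement you are addressing is a \emph{conjecture}, not a theorem: the paper does not supply a proof of Conjecture~\ref{ConjETNC}, and indeed one of its main points (Theorem~\ref{TheoDeux}) is that item~\ref{ItemSpec} probably fails in the generality stated. So your proposal should be read not as a proof of the conjecture but as an outline of what can be salvaged, and on that reading it tracks the paper closely. Your factorization via the excision decomposition of Proposition~\ref{PropBienDef}, the reduction to the local analysis at $v\in\Sigma_{0}\setminus\Sigma$, and the identification of the key input as constancy of the rank of $T^{I_{v}}$ under specialization are exactly the skeleton of the proof of Theorem~\ref{TheoPositif}. Your diagnosis of the obstruction (monodromy dropping, so that $T_{\psi}^{I_{v}}$ strictly contains the image of $T^{I_{v}}$) is likewise the mechanism behind Theorem~\ref{TheoDeux}.

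One refinement worth noting: you phrase the key local input as ``a pure monodromic specialization preserves the Weight--Monodromy filtration together with its monodromy operator, so that $T_{\psi}^{I_{v}}$ is free of the expected rank''. The paper's argument in Theorem~\ref{TheoPositif} is slightly more indirect and does not assert freeness of $T_{\psi}^{I_{v}}$. Rather, it invokes \cite[Theorem~4.1]{SahaPreprint} to show that the graded pieces of the monodromy filtration on $V$ specialize isomorphically to the graded pieces of the weight filtration on $V_{\pi}$ at a single motivic point $\pi$ factoring through $\psi$, whence $\rank_{\Lambda}T^{I_{v}}=\rank_{\Ocal_{\pi}}T_{\pi}^{I_{v}}$; the intermediate rank $\rank_{\Lambda'}T_{\psi}^{I_{v}}$ is then squeezed between these two equal numbers. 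This is what forces $\psi(\Eul_{v}(T,1))=\Eul_{v}(T_{\psi},1)$, and it is the reason the hypothesis in Theorem~\ref{TheoPositif} requires a motivic point of non-zero weight to factor through $\psi$, rather than $\psi$ itself being motivic.
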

\section{Examples}
Actually existing $\Lambda$-adic families of $G_{F,\Sigma}$-representation fall broadly in three categories. Firstly, one can consider a single motive $M$ over $F$ and consider the system of motives with coefficients $\{M\times_{F}K\}_{K\subset L}$ indexed by finite sub-extensions $K$ of a $\zp^{d}[\Delta]$-extension $L$ ($\Delta$ a finite commutative group). When $L$ is the cyclotomic extension of $F$, this construction recovers classical Iwasawa theory of the motive $M$. Secondly, one can consider $\Lambda$-adic families of automorphic representations of a reductive group $\G$ and the corresponding family of $G_{F,\Sigma}$-representation. In that case, $\Lambda$ is typically a local factor of a suitably defined Hecke algebra attached to $\G$. Finally, one can consider the more abstract (or algebraic) families arising as universal deformation of an absolutely irreducible residual $G_{F,\Sigma}$-representation $\rhobar$. 
\subsection{Punctual family}\label{SubPunctual}
The simplest exemple of $\Lambda$-adic family to which conjecture \ref{ConjETNC} applies is the punctual family $(T,\rho,\Ocal)$ given by the choice of a $G_{F}$-stable $\Ocal$-lattice inside the $p$-adic étale realization $M_{\et,\pid}$ of a pure motivic data. In that case, conjecture \ref{ConjETNC} is simply a restatement of conjecture \ref{ConjTNC} but the language it introduces proves useful.

Let $F_{\infty}$ be an abelian extension of $F$ unramified outside $\Sigma$ whose Galois group $\Gamma_{\infty}=\Gamma\times\Delta$ is the product of a free $p$-group $\Gamma\simeq\zp^{d}$ of rank $d\geq1$ with a finite group $\Delta=\Delta_{0}\times\Delta_{p}$ with $\Delta_{p}$ a finite, abelian $p$-group and $\Delta_{0}$ a finite abelian group of order prime to $p$. Let $\Lambda$ be the completed group $\Ocal$-algebra
\begin{equation}\label{EqLambdaAlg}
\Lambda= \Ocal[[\Gamma_{\infty}]]=\sommedirecte{\chi\in\hat{\Delta}_{0}}{}\Ocal_{\chi}[\Delta_{p}][[\Gamma]]\simeq\sommedirecte{\chi\in\hat{\Delta}_{0}}{}\Ocal_{\chi}[\Delta_{p}][[X_{1},\cdots,X_{d}]].
\end{equation}
In \eqref{EqLambdaAlg}, $\hat{\Delta}_{0}$ denotes the group of character of $\Delta_{0}$ and $\Ocal_{\chi}$ denotes the ring generated over $\Ocal$ by the values of $\chi$. The ring $\Lambda$ is semi-local and its local factors of $\Lambda$ are of the form $\Ocal_{\chi}[\Delta_{p}][[\Gamma]]$. Hence, they are $\Ocal_{\chi}$-algebras of Krull dimension $1+d$ which are regular if $\Delta_{p}$ is trivial and complete intersection in general. Let $(F_{\alpha},\cor_{\alpha,\beta})$ be the projective system of finite sub-extensions of $F_{\infty}$ with corestriction as transition maps. Then the inverse limit
\begin{equation}\nonumber
\Delta_{\Sigma}(T/F_{\infty})\eqdef\limproj{F_{\alpha}}\ \Delta_{\Sigma}(T/F_{\alpha})
\end{equation}
coincides by construction with the $\Lambda$-adic fundamental line
\begin{equation}\nonumber
\Delta_{\Sigma}(T\tenseur_{\Ocal}\Lambda/F)=\Det^{-1}_{\Lambda}\RGamma_{\et}(\Ocal_{F}[1/\Sigma],T\tenseur_{\Ocal}\Lambda)\tenseur_{\Lambda}\produittenseur{\s:F\plonge\C}{}\Det^{-1}_{\Lambda}T\tenseur_{\Ocal}\Lambda(-1)^{+}
\end{equation}
of $T\tenseur_{\Ocal}\Lambda$ as defined in statement \ref{ItDef} of conjecture \ref{ConjETNC} (since identity is a $\Sigma$-brittle specialization of $\Lambda$). The fundamental line $\Delta_{\Sigma}(T/F_{\infty})$ admits according to statement \ref{ItCores} of conjecture \ref{ConjETNC} a basis
\begin{equation}\nonumber
\z_{\Sigma}(T/F_{\infty})\eqdef\limproj{F_{\alpha}}\ \z_{\Sigma}(T/F_{\alpha}).
\end{equation}
By construction and Shapiro's lemma, the pair $\z_{\Sigma}(T/F_{\infty})\in\Delta_{\Sigma}(T/F_{\infty})$ is equal to the pair $\z_{\Sigma}(T\tenseur_{\Ocal}\Lambda/F)\in\Delta_{\Sigma}(T\tenseur_{\Ocal}\Lambda/F)$. 

Because $\Lambda$ is unramified outside of $\Sigma$ as $G_{F}$-representation, there is an equality 
\begin{equation}\nonumber
H^{0}(I_{v},T\tenseur_{\Ocal}\Lambda)=H^{0}(I_{v},T)\tenseur_{\Ocal}\Lambda
\end{equation} 
for all $v\notin\Sigma$. Hence, there is a canonical isomorphism
\begin{equation}\nonumber
\RGamma_{\et}(\Ocal[1/\Sigma],T\tenseur_{\Ocal}\Lambda)\Ltenseur_{\Lambda}\Ocal[\Delta]\isocan\RGamma_{\et}(\Ocal[1/\Sigma],T\tenseur_{\Ocal}\Ocal[\Delta])
\end{equation}
given by the quotient by the augmentation ideal and more generally a canonical isomorphism
\begin{equation}\nonumber
\RGamma_{\et}(\Ocal[1/\Sigma],T\tenseur_{\Ocal}\Lambda)\Ltenseur_{\Lambda}S[\Delta]\isocan\RGamma_{\et}(\Ocal[1/\Sigma],T\tenseur_{\Ocal}S[\Delta])
\end{equation}
for all specialization $\Ocal[[\Gamma]]\fleche S$ with values in a characteristic zero domain. Hence, the natural map
\begin{equation}\nonumber
\Delta_{\Sigma}(T/K)\tenseur_{\Lambda,\psi}S\fleche\Delta_{\Sigma}(T_{\psi}/K)
\end{equation}
is an isomorphism for such $\psi$ and statement \ref{ItemSpec} thus holds for all $\psi$ restricting to such specializations.

An important particular case is given by the cyclotomic deformation of a motive (\cite{GreenbergIwasawaMotives}), in which $F_{\infty}$ is the cyclotomic $\zp$-extension of $F$ and $\Lambda$ is the classical Iwasawa-algebra $\Lambda_{\Iw}$. Because $\zp(n)$ is a specialization of $\Lambda_{\Iw}$ for all $n\in\Z$, conjecture \ref{ConjCritique} implies that if there exists $n\in\Z$ such that $M(n)$ is critical (for instance if $M$ is pure of odd weight), then the set of classical specializations $\psi$ of $\Lambda$ such that $M_{\psi}$ is a strictly critical motive is a Zariski-dense subset of $\Spec\Lambda_{\Iw}$. Hence, as long as $M$ admits a critical Tate twist, the normalization of the period maps in the strictly critical case given is enough to specify uniquely $\z_{\Sigma}(T\tenseur_{\Ocal}\Lambda_{\Iw}/F)$.
\subsection{$p$-adic families of automorphic representations}\label{SubAutomorphic}
We now turn to the more interesting second class of $p$-adic families. Under a condition on the support of completed cohomology, which in the case of $\GL_{2}$ amounts to localization at a non-Eisenstein maximal ideal of the reduced Hecke algebra, we construct many $p$-adic families of $G_{F,\Sigma}$-representations parametrized by $p$-adic Hecke algebras (and quotients thereof) which admit a Zariski-dense subset of motivic specializations. 
\subsubsection{Completed cohomology of tower of Shimura varieties}
In this subsection, we consider the $p$-adic families parametrized by $p$-adic Hecke algebras arising from $p$-adic families of automorphic representations of a reductive group $\G$ defined over $F^{+}$. 

Fix $F^{+}$ a number field and recall that $I_{F^{+},\infty}$ and $I_{F^{+},p}$ are the sets of embeddings of $F^{+}$ inside $F^{+}_{v}$ for $v|\infty$ and $v|p$ respectively. Fix an embedding of $F^{+}_{v}$ inside $\Fbar^{+}_{v}$ for each place $v$, an embedding of $\qp$ inside $\Qbar_{p}$ as well as an identification of $\C$ with $\Qbar_{p}$ extending this embedding. These choices identify the sets $I_{F^{+},\infty}$ and $I_{F^{+},p}$. Let $\G$ be a reductive algebraic group defined over $F^{+}$. Denote by $\operatorname{Ram}(\G)$ the finite set of finite places of $F^{+}$ such that $\G(F^{+}_{v})$ is ramified, that is to say the smallest set such that for all $v\notin\operatorname{Ram}(G)$, the group $\G(F^{+}_{v})$ is quasi-split and split over an unramified extension.

Let $U^{\circ}_{\infty}$ be the connected component of the identity of a maximal compact subgroup $U_{\infty}$ of the real points $\G(F^{+}\tenseur_{\Q}\R)$ of $\G$. To $U\subset\G(\A_{F^{+}}^{(\infty)})$ a compact open subgroup of the form $U_{p}U^{p}$ with $U_{p}\subset\G(F^{+}_{p})$ and $U^{p}\subset\G(\A_{F^{+}}^{(p\infty)})$ we attach the double quotient 
\begin{equation}\nonumber
Y(U)=\G(F^{+})\backslash\G(\A_{F^{+}})/U^{\circ}_{\infty}U
\end{equation}
and consistently assume that $U$ is sufficiently small so that $Y(U)$ is endowed with the structure of a Shimura variety of dimension $d$ (we allow $d$ equal to zero). Denote by $\Acal(\G(F^{+})\backslash\G(\A_{F^{+}}))$ and $\Acal(\G(F^{+})\backslash\G(\A_{F^{+}})/U)$ respectively the space of $U_{\infty}$-finite automorphic forms on $\G(F^{+})\backslash\G(\A_{F^{+}})$ and the space of $U_{\infty}$-finite automorphic forms on $\G(F^{+})\backslash\G(\A_{F^{+}})$ of level $U$. 

Consider $U$ as in the previous paragraph. If $M$ is a representation of $U_{p}\subset\G(F^{+}_{p})$, then $U$ acts on the right on $M$ by $m\cdot u=(u_{v}^{-1}m)_{v|p}$ and so $M$ defines a local system
\begin{equation}\nonumber
\Fcal_{M}\eqdef(W\times(\G(F^{+})\backslash\G(\A_{F^{+}})/U_{\infty}^{\circ}))/U
\end{equation}
on $Y(U)$. In particular, let $W$ be a finite dimensional algebraic complex representation of $\G(F^{+}_{\infty})$ and fix $E$ a finite extension of $\qp$ with ring of integers $\Ocal$ and over which $\G$ splits. By our fixed choice of isomorphisms, we may and do view $W$ as having coefficients in $\Qbar_{p}$. As $\G$ splits over $E$, we may further view it as having coefficients in $E$. Restricting $W$ to $U_{p}\subset\G(F^{+}_{p})\simeq\G(F^{+}_{\infty})$ thus defines a local system $\Fcal_{W}$ of $E$-vector spaces over $Y(U)$. Likewise, if $W_{0}$ is an $\Ocal$-lattice inside $W$ and if $U_{p}$ is sufficiently small, then $W_{0}$ is stable by $U_{p}$ (and so is an $\Ocal[U_{p}]$-modules). The local systems $\Fcal_{W_{0}}$ and $\Fcal_{W_{0}/\varpi^{s}}$ of $\Ocal$ and $\Ocal/\varpi^{s}$-modules respectively on $Y(U)$ are thus well-defined. We consider the cohomology group with compact support $H^{d}_{c}(Y(U)(\C),\Fcal_{W})$ as well as the cohomology groups $H^{d}_{c}(Y(U)(\C),\Fcal_{W_{0}})$ and $H^{d}_{c}(Y(U)(\C),\Fcal_{W_{0}/\varpi^{s}})$ when $U_{p}$ is sufficiently small.

Fix $U$ a compact open subgroup as above. Let $\Sigma(U^{p})\supset\operatorname{Ram}{\G}$ be the finite set of finite places such that $U^{p}_{v}\cap\G(F^{+}_{v})$ is not a maximal hyperspecial compact open subgroup of $\G(F^{+}_{v})$. Denote by $\Sigma$ a finite set of finite places containing $\Sigma(U^{p})\cup\{v|p\}$. The abstract local spherical Hecke algebra at $v\notin\Sigma$ is the $\Ocal$-algebra 
\begin{equation}\nonumber
\Hecke_{v}\eqdef\Ocal[U_{v}\backslash\G(F^{+}_{v})/U_{v}]
\end{equation}
and the abstract spherical Hecke algebra is the restricted tensor product 
\begin{equation}\nonumber
\Hs=\underset{v\notin\Sigma}{\bigotimes}{'}\Hecke_{v}.
\end{equation}
The Satake isomorphism shows that $\Hecke_{v}\tenseur_{\Ocal} E$ is a commutative algebra. So is therefore $\Hs\tenseur_{\Ocal}E$. For simplicity, we henceforth assume that there exists a finite Galois extension $F/F^{+}$ such that for all $v\notin\Sigma$, there exists a finite place $w|v$ of $F$ such that $\G(F^{+}_{v})$ splits over $K_{w}$. If $v\notin\Sigma$ moreover splits completely in $K$, then we fix an isomorphism $\iota_{w}:\G(F^{+}_{v})\simeq\GL_{n}(K_{w})$ such that $U_{v}$ is the inverse image of $\GL_{n}(\Ocal_{F_{w}})$. For
\begin{equation}\nonumber
g_{w,i}=\operatorname{diag}(\underset{i}{\underbrace{\varpi_{w},\cdots,\varpi_{w}}},1,\cdots,1),
\end{equation}
denote by $T_{i}(w)$ the element $[U_{v}\backslash \iota^{-1}_{w}( g_{w,i})/U_{v}]$ of $\Hecke_{v}$. Then the Satake isomorphism is an isomorphism between $\Hecke_{v}\tenseur_{\Ocal} E$ and $E[T_{i}^{\pm1};1\leq i\leq n]$ sending $T_{i}(w)$ to $T_{i}$. In that case, the reciprocal Satake polynomial at $w|v$ is defined to be the polynomial
\begin{equation}\nonumber
1+\cdots+(-1)^{j}(N_{F/\Q}w)^{j(j-1)/2}T_{j}(w)X^{j}+\cdots+(-1)^{n}(N_{F/\Q}w)^{n(n-1)/2}T_{n}(w)X^{n}.
\end{equation}
Denote by $\Ssplit$ the set of finite places places of $K$ above the finite places $v\notin\Sigma$ of $F^{+}$ which splits completely in $F$.

The abstract spherical Hecke algebra acts on $H^{d}_{c}(Y(U)(\C),\Ocal)$ and $\Acal(\G(F^{+})\backslash\G(\A_{F^{+}}))$ (and the latter action preserves $\Acal(\G(F^{+})\backslash\G(\A_{F^{+}})/U)$). Denote by $\Hs(U)$ the image of $\Hs$ inside the endomorphism of $H^{d}_{c}(Y(U)(\C),\Ocal)$.



An automorphic representation $\pi$ of $\G(\A^{(\infty)}_{F})$ is an irreducible $\G(\A^{(\infty)}_{F})$-representation arising as a quotient of a sub-representation of $\Acal(\G(F)\backslash\G(\A_{F}))$. An automorphic representation $\pi$ is said to contribute to cohomology if there exists a complex representation $W$ of $\G$ as above such that $\pi$ is a sub-quotient of $\liminj{U}\!\ H^{d}_{c}(Y(U)(\C),\Fcal_{W})$.

If $\pi=\otimes{'}\pi_{v}$ is an automorphic representation of $\G(\A_{F}^{(\infty)})$ and if $v\notin\Sigma$, then $\pi_{v}^{U_{v}}$ is a $\Qbar_{p}$-vector space of dimension 1 on which $\Hecke_{v}$ acts. Hence, to $\pi$ is attached a morphism $\lambda_{\pi}:\Hs\fleche\Qbar_{p}$. The image of $\lambda_{\pi}$ is contained in the ring of integers $\Ocal_{\pi}$ of a finite extension of $\Qbar_{p}$. For $L_{\pid}$ a finite extension of $\qp$ containing $\Ocal_{\pi}$, we say that a $G_{F,\Sigma}$-representation $(V,\rho,L_{\pid})$ is adapted to $\pi$ if for all finite place $w\in\Ssplit$, the Euler factor $\Eul_{v}(V/K,X)$ of $V$ is equal to the reciprocal Satake polynomial at $v$ or, more concretely, if 
\begin{equation}\label{EqAdapted}
\det(\Id-\rho(\Fr(w))X)=1-\lambda_{\pi}(T_{1}(w))X+\cdots+(-1)^{n}(N_{F/\Q}w)^{n(n-1)/2}\lambda_{\pi}(T_{n}(w))X^{n}
\end{equation}
for all $w\in\Ssplit$. More generally, if $\lambda:\Hsm\fleche S$ is a local $\Ocal$-algebras morphism from a local factor of $\Hs$ to a ring $S$, we say that a $G_{F,\Sigma}$-representation $(V,\rho,S)$ is adapted to $\lambda$ if
\begin{equation}\label{EqAdaptedLambda}
\det(\Id-\rho(\Fr(w))X)=1-\lambda(T_{1}(w))X+\cdots+(-1)^{n}(N_{F/\Q}w)^{n(n-1)/2}\lambda(T_{n}(w))X^{n}
\end{equation}
for all finite place $w\in\Ssplit$. As $\Ssplit$ is of Dirichlet density 1 in the set of finite places of $F$, if an absolutely irreducible representation $V$ is adapted to $\pi$, then $V$ is uniquely determined up to isomorphism. By \cite{CarayolRepresentationsGaloisiennes}, $V$ is characterized by the property that $\tr(\rho(\Fr(w)))=\lambda(T_{1}(w))$ for all $w\in\Ssplit$. If $(V,\rho,L_{\pid})$ is adapted to $\pi$, there exists a $G_{F,\Sigma}$-representation $(T,\rho,\Ocal_{L_{\pid}})$ such that $T\tenseur_{\Ocal_{L_{\pid}}}L_{\pid}$ is equal to $V$. To such a $T$ is attached a residual representation $(\Tbar,\rhobar,\Ocal_{\pi}/\mgot_{\Ocal_{\pi}})$ adapted to $\lambda_{\pi}\modulo\mgot_{\Ocal_{\pi}}$. We say that $\rho$ is residually absolutely irreducible if $\rhobar$ is absolutely irreducible, in which case the representations $\Tbar$ and $T$ are unique up to isomorphism and characterized by the property that, for all $v\notin\Sigma$, $\tr(\Fr(v))=\lambda_{\pi}(T(v))\modulo\mgot_{\Ocal_{\pi}}$ and $\tr(\Fr(v))=\lambda_{\pi}(T(v))$ respectively.

As in \cite{EmertonCompleted}, for $0\leq i\leq 2d$, we introduce various groups computing the completed cohomology with tame level $U^{p}$ of the tower of varieties $\{Y(U_{p}U^{p})\}_{U_{p}}$.
\begin{align}\nonumber
\Htilde^{i}_{c}(U^{p},\Ocal/\varpi^{s})&=\liminj{U_{p}}\ H^{i}_{c}(Y(U_{p}U^{p})(\C),\Ocal/\varpi^{s})\\\nonumber
\Htilde^{i}_{c}(U^{p},\Ocal)&=\limproj{s}\ \liminj{U_{p}}\ H^{i}_{c}(Y(U_{p}U^{p})(\C),\Ocal/\varpi^{s})\\\nonumber
\Htilde^{i}_{c}(U^{p},E)&=\left(\limproj{s}\ \liminj{U_{p}}\ H^{i}_{c}(Y(U_{p}U^{p})(\C),\Ocal/\varpi^{s})\right)\tenseur_{\Ocal}E\\\nonumber
\Htildetilde^{i}(U^{p},\Ocal/\varpi^{s})&=\limproj{U_{p}}\ H^{i}(Y(U_{p}U^{p})(\C),\Ocal/\varpi^{s})\\\nonumber
\Htildetilde^{i}(U^{p},\Ocal)&=\limproj{s}\ \limproj{U_{p}}\ H^{i}(Y(U_{p}U^{p})(\C),\Ocal/\varpi^{s})\\\nonumber
\Htildetilde^{i}(U^{p},E)&=\left(\limproj{s}\ \limproj{U_{p}}\ H^{i}(Y(U_{p}U^{p})(\C),\Ocal/\varpi^{s})\right)\tenseur_{\Ocal}E
\end{align}
By \cite[Corollary 2.2.27]{EmertonCompleted}, the cohomology group $\Htilde^{d}_{c}(U^{p},\Ocal)$ is also the $\varpi$-adic completion of $\liminj{U_{p}}\ H^{d}_{c}(Y(U_{p}U^{p},\Ocal))$.

Poincaré duality
\begin{equation}\nonumber
H^{i}_{c}(Y(U_{p}U^{p})(\C),\Ocal/\varpi^{s})\times H^{2d-i}(Y(U_{p}U^{p})(\C),\Ocal/\varpi^{s})\fleche\Ocal/\varpi^{s}
\end{equation}
induces a duality between $\Htilde^{d}_{c}(U^{p},-)$ and $\Htildetilde^{d}(U^{p},-)$ for $-=\Ocal/\varpi^{s},\Ocal$ or $E$.

Again for $-=\Ocal/\varpi^{s},\Ocal$ or $E$, the inverse limit
\begin{equation}\nonumber
\Hs(U^{p})\eqdef\limproj{U_{p}}\ \Hs(U_{p}U^{p})
\end{equation}
acts on $\Htilde_{c}^{i}(U^{p},-)$ and $\Htildetilde^{i}(U^{p},-)$. The ring $\Hs(U^{p})$ is a product of finitely many complete, local rings. A morphism $\lambda:\Hs(U^{p})[1/p]\fleche\Qbar_{p}$ is said to be automorphic if it factors through the morphism $\lambda_{\pi}:\Hs(U_{p}U^{p})[1/p]\fleche\Qbar_{p}$ attached to an automorphic representation $\pi$ which contributes to cohomology. If $\lambda_{\pi}$ is an automorphic specialization of $\Hs(U^{p})[1/p]$, we say that $\pi$ contributes to $\Htilde^{d}_{c}(U^{p},\Ocal)$. A maximal ideal $\mgot$ of $\Hs(U^{p})$ is said to be automorphic if there exists an automorphic morphism $\lambda$ which factors through $\Hs(U^{p})_{\mgot}$ or, more concretely, if there exist a local system $\Fcal_{W}$ on $Y(U_{p}U^{p})$ for $U_{p}$ sufficiently small and an automorphic representation $\pi$ contributing to $\liminj{U_{p}}\!\ H^{d}_{c}(Y(U_{p}U^{p})(\C),\Fcal_{W})_{\mgot}$.
\subsubsection{Interpolation of system of Hecke eigenvalues}
We henceforth assume the following properties of $\G$ and $U^{p}$. 
\begin{HypEnglish}\label{HypEisenstein}
There exists an automorphic maximal ideal $\mgot$ of $\Hs(U^{p})$ satisfying the following properties.
\begin{enumerate}
\item If $\pi$ is an algebraic automorphic representation of $\G(\A_{F^{+}}^{(\infty)})$ contributing to $\Htilde^{d}_{c}(U^{p},\Ocal)_{\mgot}$, then there exists a residually absolutely irreducible $G_{F,\Sigma}$-representation $\rho_{\pi}$ adapted to $\pi$. \item For all $i\neq d$, the cohomology group $\Htilde^{i}_{c}(U^{p},\Ocal)$ vanishes after localization at $\mgot$.
\end{enumerate}
\end{HypEnglish}
%
The significance of assumption \ref{HypEisenstein} lies in the following proposition.
\begin{Prop}\label{PropInterpolation}
Let $\Hsm$ be the localization of $\Hs(U^{p})$ at an automorphic maximal ideal satisfying assumption \ref{HypEisenstein}. Then there exists an absolutely irreducible $G_{F,\Sigma}$-representation $(T,\rho,\Hsm)$ characterized by the property
\begin{equation}\nonumber
\tr(\rho(\Fr(w)))=T(w)
\end{equation}
for all $w\in\Ssplit$. There exists a Zariski-dense subset $\Spec^{\aut}\Hsm[1/p]\subset\Spec\Hsm[1/p]$ such that the fibre at $x\in\Spec^{\aut}\Hsm[1/p]$ of $T$ is an absolutely irreducible $G_{F,\Sigma}$-representation adapted to an automorphic representation $\pi$ contributing to $\Htilde^{d}_{c}(U^{p},\Ocal)_{\mgot}$.

More generally, if $\lambda:\Hsm\fleche S$ is map of local $\Ocal$-aglebras, then there exists an absolutely irreducible $G_{F,\Sigma}$-representation $(T_{\lambda},\rho_{\lambda},S)$ characterized by the property
\begin{equation}\nonumber
\tr(\rho_{\lambda}(\Fr(w)))=\lambda(T_{1}(w))
\end{equation} 
for all $w\in\Ssplit$.
\end{Prop}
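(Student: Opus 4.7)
The plan is to deduce the proposition in three steps: establishing Zariski density of automorphic points, interpolating their traces into a pseudo-representation, and then lifting to a genuine representation by residual absolute irreducibility.

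First I would show that the set of automorphic points of $\Spec\Hsm[1/p]$ is Zariski-dense. Because $\mgot$ is automorphic, assumption \ref{HypEisenstein}(2) together with \cite{EmertonCompleted} implies that $\Htilde^{d}_{c}(U^{p},\Ocal)_{\mgot}$ is a faithful $\Hsm$-module (any element of $\Hsm$ killing the completed cohomology must vanish), hence $\Hsm$ injects into the endomorphism ring of a cohomological module whose classical algebraic vectors are known to be dense in the $p$-adic topology. Passing to $\Spec\Hsm[1/p]$ and invoking the fact that the Hecke eigensystems of algebraic automorphic representations contributing to $\liminj{U_{p}}\!\ H^{d}_{c}(Y(U_{p}U^{p})(\C),\Fcal_{W})_{\mgot}$ as $W$ varies give a family of closed points whose common vanishing locus is empty yields the desired density statement. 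The resulting set $\Spec^{\aut}\Hsm[1/p]$ is exactly the subset of closed points $x$ whose residue system of eigenvalues $\lambda_{x}$ factors through some $\lambda_{\pi}$ with $\pi$ contributing to $\Htilde^{d}_{c}(U^{p},\Ocal)_{\mgot}$.

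Next I would interpolate the Galois representations attached at automorphic points into a pseudo-representation (equivalently, a Chenevier determinant) over $\Hsm$. At each $x\in\Spec^{\aut}\Hsm[1/p]$, assumption \ref{HypEisenstein}(1) furnishes an absolutely irreducible $G_{F,\Sigma}$-representation $\rho_{\pi_{x}}$ adapted to $\pi_{x}$, whose characteristic polynomial of $\Fr(w)$ at $w\in\Ssplit$ is given by \eqref{EqAdapted} with coefficients in $\lambda_{x}(\Hsm)$. The reduction modulo $\mgot$ of any two such $\rho_{\pi_{x}}$ coincides (since their traces on a Chebotarev-dense set of Frobenius classes in $\Ssplit$ coincide modulo $\mgot$), and is absolutely irreducible by hypothesis. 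The formulas \eqref{EqAdapted} prescribe the values of the universal determinant on Frobenius elements at $w\in\Ssplit$, and by Chebotarev density these determine a continuous pseudo-representation (or determinant in the sense of Chenevier) $D_{T}\colon G_{F,\Sigma}\fleche\Hsm$ of dimension $n$: the relations defining a determinant hold after specialization at every point of the Zariski-dense set $\Spec^{\aut}\Hsm[1/p]$, hence hold identically since $\Hsm$ is reduced at those components (or, if not, one works modulo the nilradical and recovers honest values afterwards using that $\Hsm$ is $p$-adically separated and complete).

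The main obstacle is the passage from pseudo-representation to representation, which is where the hypothesis of residual absolute irreducibility is essential. Once this is established, I would invoke the theorem of Nyssen and Rouquier (or its extension by Chenevier to determinants over henselian local rings), which states that an $n$-dimensional pseudo-representation over a local henselian ring whose residual trace is absolutely irreducible is the trace of a genuine representation, unique up to conjugation. Applied to $D_{T}$ and the complete local ring $\Hsm$, this produces the desired $(T,\rho,\Hsm)$ with $\tr\rho(\Fr(w))=T_{1}(w)$ for all $w\in\Ssplit$, and absolute irreducibility is inherited from the residual representation. Uniqueness and the characterization by traces follow from Chebotarev density and the fact that the characters of absolutely irreducible representations determine them up to isomorphism.

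Finally, the statement for a general morphism $\lambda:\Hsm\fleche S$ of local $\Ocal$-algebras is obtained by base change: set $(T_{\lambda},\rho_{\lambda},S)=(T\tenseur_{\Hsm,\lambda}S,\rho\tenseur1,S)$. The trace identity and absolute irreducibility pass to $S$ because the residual representation of $T_{\lambda}$ agrees with that of $T$ (both reduce to the absolutely irreducible $\rhobar$ attached to $\mgot$), and Carayol's theorem on representations characterized by traces then gives the asserted characterization.
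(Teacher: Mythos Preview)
Your proposal is correct and follows essentially the same three-step strategy as the paper: Zariski-density of automorphic points, interpolation into a Chenevier determinant, and lifting to a genuine representation via residual absolute irreducibility (with the final statement obtained by base change).

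The only notable difference is in the density step. You invoke Emerton's results to assert that locally algebraic vectors are dense in $\Htilde^{d}_{c}(U^{p},\Ocal)_{\mgot}$; the paper proves this explicitly by first using assumption \ref{HypEisenstein}(2) to show that $\Htildetilde^{d}(U^{p},E)_{\mgot}$ is projective as an $E\tenseur_{\Ocal}\Ocal[[K_{p}]]$-module (via injectivity of $H^{d}_{c}$ in the category of smooth $U_{p}$-representations when the other cohomology groups vanish), hence a direct summand of $\Ccal(K_{p},E)^{r}$, in which locally algebraic vectors are dense. The paper then applies \cite[Corollary 2.2.18]{EmertonCompleted} to identify the $\check{W}$-locally algebraic part with classical cohomology, and concludes exactly as you do that any $t$ in the intersection of all automorphic kernels vanishes. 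Your version compresses this into a citation, which is fine provided you are aware that the density of locally algebraic vectors is not automatic and genuinely uses the vanishing hypothesis in assumption \ref{HypEisenstein}(2).
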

\begin{proof}
As preliminary for the proof, we recall that if $W$ is a representation of an open subgroup $K_{p}$ of $\G(F^{+}_{p})$, then a vector $v$ of a $K_{p}$-representation $V$ is said to be locally $W$-algebraic if there exists an open subgroup $H$ of $K_{p}$, a natural number $n$, and an $H$-equivariant homomorphism $W^{n}\fleche V$ whose image contains $v$. The $K_{p}$-locally algebraic vectors of $V$ are the vectors which are $W$-locally algebraic for some representation $W$ (see \cite[Definition 4.2.1 and Proposition-Definition 4.2.6]{EmertonAnalytic}). We denote by $\Ccal^{\operatorname{la}}(K_{p},E)$ the vector space of $E$-valued locally analytic function on $K_{p}$ (\cite[page 413]{EmertonLocally}) and recall that there is a natural injection with dense image from $\Ccal^{\operatorname{la}}(K_{p},E)$ in the $E$-vector space $\Ccal(K_{p},E)$ of continuous $E$-valued functions on $K_{p}$.

Fix $s>0$ and $U_{p}$ a sufficiently small compact open subgroup of $\G(F^{+}_{p})$. Thanks to statement 2 of assumption \ref{HypEisenstein}, the cohomology group $H^{d}_{c}(U_{p}U^{p},\Ocal/\varpi^{s})_{\mgot}$ is the only non-zero cohomology group of a complex computed by an injective resolution, and hence an injective object in the category of smooth representation of $U_{p}$. Hence, the dual of $\Htilde^{d}_{c}(U^{p},\Ocal/\varpi^{s})_{\mgot}$, which is $\Htildetilde(U^{p},\Ocal/\varpi^{s})_{\mgot}$, is a projective $\Ocal/\varpi^{s}[[U_{p}]]$-module. As a consequence, $\Htilde^{d}(U^{p},E)_{\mgot}$ is a projective $E\tenseur_{\Ocal}\Ocal[[U_{p}]]$-module and so is a free $E\tenseur_{\Ocal}\Ocal[[U_{p}]]$-module. The functor
\begin{equation}\nonumber
\Hom_{E\tenseur_{\Ocal}\Ocal[[U_{p}]]}(\Htildetilde^{d}(U^{p},E)_{\mgot},-)
\end{equation}
is thus exact. Fix $K_{p}$ a maximal compact open subgroup of $\G(F^{+}_{p})$. Because $K_{p}/U_{p}$ is finite, taking $K_{p}/U_{p}$-invariant is an exact functor and so 
\begin{equation}\nonumber
\Hom_{E\tenseur_{\Ocal}\Ocal[[U_{p}]]}(\Htildetilde^{d}(U^{p},E)_{\mgot},-)^{K_{p}/U_{p}}\simeq\Hom_{E\tenseur_{\Ocal}\Ocal[[K_{p}]]}(\Htildetilde^{d}(U^{p},E)_{\mgot},-)
\end{equation}
is exact. Hence $\Htildetilde^{d}(U^{p},E)_{\mgot}$ is a projective $E\tenseur_{\Ocal}\Ocal[[K_{p}]]$-module and so a direct summand of a module isomorphic to $E\tenseur_{\Ocal}\Ocal[[K_{p}]]^{r}$ for some $r$. In turn, $\Htilde^{d}_{c}(U^{p},\Ocal)_{\mgot}$ is then a direct summand of a module isomorphic to $\Ccal(K_{p},E)^{r}$. As $K_{p}$-locally algebraic vectors are dense in $\Ccal^{\operatorname{la}}(K_{p},E)$ and so in $\Ccal(K_{p},E)$, they are dense in $\Htilde^{d}_{c}(U^{p},\Ocal)_{\mgot}$.

Let as above $W$ be a finite dimensional representation of $\G(F^{+}_{\infty})$ with coefficients in $E$ and denote by $\check{W}$ the contragredient representation of $W$. Because all higher $\Ext$ groups vanish by statement 2 of assumption \ref{HypEisenstein}, \cite[Corollary 2.2.18]{EmertonCompleted} gives an isomorphism 
\begin{equation}\label{EqIsomLocalAlg}
\left(\liminj{U_{p}}\ H^{d}_{c}(Y(U_{p}U^{p})(\C),\Fcal_{W})_{\mgot}\right)\tenseur_{E}\check{W}\simeq\Htilde_{c}^{d}(U^{p},\Ocal)_{\mgot,\check{W}-\textrm{l.alg}}.
\end{equation}
Consider an element $t\in\Hsm(U^{p})$ in the intersection of the kernels of all automorphic $\lambda$. By statement 1 of assumption \ref{HypEisenstein}, $t$ annihilates $H^{d}_{c}(U_{p}U^{p},\Fcal_{W})_{\mgot}$ for all $W$. Thus $t$ annihilates $\Htilde_{c}^{d}(U^{p},\Ocal)_{\mgot,\check{W}-\textrm{l.alg}}$ by \eqref{EqIsomLocalAlg} and so it annihilates $\Htilde^{d}_{c}(U^{p},\Ocal)_{\mgot}$ by density. As $\Hsm(U^{p})$ acts faithfully on $\Htilde^{d}_{c}(U^{p},\Ocal)_{\mgot}$ by definition, $t$ is equal to zero.

Patching the determinant laws\footnote{That is to say the determinants in the sense of \cite{ChenevierDeterminant}. G.Chenevier recommends the terminology we use here.} attached to the $G_{F,\Sigma}$-representations $\rho_{\pi}$ adapted to the automorphic points of $\Spec\Hsm(U^{p})$ thus yields first a $G_{F,\Sigma}$-representation $(T(\aid),\rho(\aid),R(\aid))$ attached to any quotient $R(\aid)=\Hsm(U^{p})/\aid$ of $\Hsm(U^{p})$ by a minimal ideal $\aid$, then a $G_{F,\Sigma}$-representation $(T,\rho,\Hsm)$ as in the statement of the proposition and finally a $G_{F,\Sigma}$-representation $(T_{\lambda},\rho_{\lambda},S)$ for any $\lambda:\Hsm(U^{p})\fleche S$ as in the proposition.

By construction, the representation $T_{\lambda}$ satisfies $\tr(\rho_{\lambda}(\Fr(w)))=\lambda(T_{1}(w))$ for all $w\in\Ssplit$ and is characterized by this property since $\rhobar_{\mgot}$ is absolutely irreducible.
\end{proof}


\subsubsection{Examples}\label{SubExamples}
We list important examples of groups $\G$ such that $\Hs$ admits an automorphic maximal satisfying assumption \ref{HypEisenstein}.
\paragraph{Quaternion algebras}Let $F^{+}$ be a totally real field and let $\G$ be the unit group of a quaternion algebra $B$ such that there is at most one infinite place $\tau$ of $F^{+}$ such that $B\tenseur_{F^{+},\tau}\R\simeq\GL_{2}(\R)$. Assumption \ref{HypEisenstein} is then satisfied if (and only if) $\mgot$ is not Eisenstein in the sense of \cite[Proposition 2]{DiamondTaylor}. If $B$ is totally definite, then its Shimura variety is zero-dimensional so its cohomology vanishes outside degree 0 and the assertion about $\mgot$ is easy. If there is exactly one real place splitting $B$, then the cohomology in degree 0 and 2 of a quaternionic Shimura curve is of residual type in the sense of \cite[Section 5.1]{FujiwaraDeformation} and hence vanishes after localization at $\mgot$ if $\mgot$ is not Eisenstein. In both cases, one can choose $F=F^{+}$.
\paragraph{Totally definite unitary group}Let $F^{+}$ be a totally real field and $F$ a CM extension of $F^{+}$. We assume that all places $v|p$ of $F^{+}$ split completely in $F$ and that the extension $F/F^{+}$ is unramified at all finite places. Let $n$ be a strictly positive integer which we assume to be odd if $[F^{+}:\Q]$ is even. Let $\G$ be a unitary group in $n$ variables over $F^{+}$ which is split by $F$, quasi-split at all finite places and such that $\G(F^{+}\tenseur_{\Q}\R)$ is compact. Let $U^{p}=\produit{v\nmid p\infty}{}U_{v}$ be a compact, open subgroup of $\G(\A_{F^{+}}^{(p\infty)})$ and let $\Sigma_{0}$ be a finite set of finite places of $F^{+}$ containing all places $v|p$ and all places $v$ such that $U_{v}$ is not a hyperspecial maximal subgroup of $\G(F^{+}_{v})$. Assume that all places $v\in\Sigma_{0}$ split completely in $K$. As the Shimura variety attached to such a $\G$ is zero-dimensional, an automorphic ideal $\mgot$ satisfies assumption \ref{HypEisenstein} if $\rhobar_{\mgot}$ is absolutely irreducible. 
\paragraph{General symplectic group of genus 2}Let $F$ be $\Q$ and $\G$ be $\GSp_{4}$. Then \cite{GenestierTilouine} shows that assumption \ref{HypEisenstein} is satisfied for a restrictive class of maximal ideals $\mgot$.

\section{Two results on $p$-adic fundamental lines}\label{SubTwo}
Fix $F$ a number field and finite set of finie places $\{v|p\}\subset\Sigma\subset\Sigma_{0}$. In this section, we consider $p$-adic families of $G_{F,\Sigma_{0}}$-representations $(T,\rho,S)$ satisfying the following properties.
\begin{HypEnglish}\label{HypMain}
\begin{enumerate}
\item The ring $S$ is of the form $\Lambda[G]$ where $\Lambda$ is a complete, local noetherian ring with residual field $k$ and where $G$ is the finite Galois group of an abelian extension $K$ of $F$ unramified outside $\Sigma$.
\item\label{ItIrr}The residual representation $\rhobar:G_{F,\Sigma_{0}}\fleche\GL_{n}(k)$ is absolutely irreducible.
\item\label{ItLanglands} For all $v\nmid p$, the characteristic polynomial of $\Fr(v)$ acting on $T^{I_{v}}$ has coefficients in $S$.
\item The set $\Spec^{\mot}S[1/p]$ of motivic specializations of $S$ pure of weight non-zero is non-empty. 
\end{enumerate}
\end{HypEnglish}
\subsection{The algebraic part}
\subsubsection{The main theorem}
Assume that $S=\Lambda[G]$ and $(T,\rho,S)$ satisfy assumption \ref{HypMain}. Let $\{v|p\}\subset\Sigma\subset\Sigma_{0}$ be a subset of finite places. 
\begin{DefEnglish}\label{DefAlgebraicPart}
The algebraic part of the $p$-adic fundamental line $\Delta_{\Sigma}(T/F)$ is the projective $S$-module 
\begin{equation}\nonumber
\Xcali_{\Sigma}(T/F)\eqdef\Det^{-1}_{S}\RGamma_{\et}(\Ocal_{F}[1/\Sigma_{0}],T)\tenseur\produittenseur{v\in\Sigma_{0}\backslash\Sigma}{}\left(\Det_{S}\RGamma_{\et}(F_{v},T)\tenseur_{S}\Det^{-1}_{S}\frac{S}{\Eul_{v}(T,1)S}\right).
\end{equation}
\end{DefEnglish}
Note that the last assertion of assumption \ref{HypMain} guarantees that $\Eul(T,1)$ is non-zero.

The following theorem shows that algebraic parts commute with motivic (and more generally monodromic) specializations, and hence establishes \cite[Conjecture 3.2.2 (i)]{KatoViaBdR} at motivic points. 
\begin{TheoEnglish}\label{TheoPositif}
Assume first that $\Sigma=\Sigma_{0}$. Then the natural map 
\begin{equation}\nonumber
\Xcali_{\Sigma}(T/F)\tenseur_{\Lambda,\psi}\Lambda'\fleche\Xcali_{\Sigma}(T_{\psi}/F)
\end{equation}
is an isomorphism.

Assume now that $\Lambda$ is an integral domain and that $\Sigma$ is arbitrary. Let $\psi:\Lambda\fleche\Lambda'$ be a specialization with values in a characteristic zero integral domain such that there exists a motivic specialization $\pi:\Lambda\fleche\Ocal_{\pi}$ pure of non-zero weight factoring through $\psi$. Then the natural map 
\begin{equation}\nonumber
\Xcali_{\Sigma}(T/F)\tenseur_{\Lambda,\psi}\Lambda'\fleche\Xcali_{\Sigma}(T_{\psi}/F)
\end{equation}
is an isomorphism.
\end{TheoEnglish}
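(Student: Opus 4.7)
The strategy is to treat the two parts of the theorem separately, using the excision formalism of Definition \ref{DefFundamentalBis} to reduce Part 2 to Part 1 together with a local analysis at the places in $\Sigma_0 \setminus \Sigma$. For Part 1, since $\Sigma = \Sigma_0$ contains all places of ramification of $T$, the \'etale sheaf $\Fcali_T$ defined by $T$ is lisse on $\Spec \Ocal_F[1/\Sigma_0]$. Combined with the finite \'etale cohomological dimension of this scheme for $p$-adic coefficients and the fact that $T$ is $S$-projective, this forces $\RGamma_{\et}(\Ocal_F[1/\Sigma_0], T)$ to be a perfect complex of $S$-modules. The standard base change theorem for perfect complexes then yields
\begin{equation}\nonumber
\RGamma_{\et}(\Ocal_F[1/\Sigma_0], T) \otimes^{\operatorname{L}}_{\Lambda, \psi} \Lambda' \simeq \RGamma_{\et}(\Ocal_F[1/\Sigma_0], T_\psi),
\end{equation}
which transports to the claimed isomorphism after applying $\Det^{-1}_S$ (which commutes with base change for perfect complexes by Knudsen--Mumford). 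The auxiliary Betti-type factor $\bigotimes_\sigma \Det^{-1}_S T(-1)^+$ commutes with $\otimes_\Lambda \Lambda'$ trivially since $T$ is $S$-free, completing Part 1.

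For Part 2, Definition \ref{DefFundamentalBis} expresses
\begin{equation}\nonumber
\Xcali_\Sigma(T/F) \simeq \Xcali_{\Sigma_0}(T/F) \otimes \bigotimes_{v \in \Sigma_0 \setminus \Sigma} L_v(T),
\end{equation}
where $L_v(T) = \Det_S \RGamma(F_v, T) \otimes_S \Det^{-1}_S (S/\Eul_v(T,1) S)$, and analogously for $T_\psi$. Part 1 handles the $\Xcali_{\Sigma_0}$ factors, so the problem reduces to showing $L_v(T) \otimes_\Lambda \Lambda' \simeq L_v(T_\psi)$ for each $v \in \Sigma_0 \setminus \Sigma$. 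Proposition \ref{PropBienDef} produces a canonical trivialization $L_v(T) \simeq S$ when $T^{I_v}$ is a free $\Lambda$-module and $\Fr(v)$ acts on it without the eigenvalue $1$, the trivialization arising from the identification of $\det(1 - \Fr(v) \mid T^{I_v})$ with the interpolated Euler factor $\Eul_v(T,1)$ of Assumption \ref{HypMain}.3. The key input from the motivic specialization $\pi$ pure of non-zero weight is to secure these hypotheses: the weight--monodromy structure on $T_\pi|_{G_{F_v}}$ forces $\pi(\Eul_v(T,1))$ to be a product of Weil numbers of non-zero total weight, hence a non-zero-divisor in $\Ocal_\pi$. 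Since $\Lambda$ and $\Lambda'$ are integral domains and $\pi$ factors through $\psi$, this propagates to $\Eul_v(T,1)$ and $\Eul_v(T_\psi,1)$ being non-zero-divisors, and the canonical trivializations then commute with the base change $\psi$ by functoriality of the Knudsen--Mumford formalism.

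The main obstacle is that purity of non-zero weight does not by itself prevent Frobenius from acquiring the eigenvalue $1$ on $T^{I_v}$: consider for instance a family specializing to an elliptic curve with split multiplicative reduction, where the motivic weight is $1$ and yet the tame monodromy filtration produces precisely such an eigenvalue. The resolution --- the technical heart of the argument --- is that the combined expression $L_v(T)$ remains well-defined in the extended Knudsen--Mumford category of virtual objects even when the individual factors degenerate, because the vanishing of $\Eul_v(T,1)$ is exactly compensated by the rise in cohomology of the local complex $[T^{I_v} \xrightarrow{1 - \Fr(v)} T^{I_v}]$. Establishing that the resulting extended canonical isomorphism between $L_v(T) \otimes_\Lambda \Lambda'$ and $L_v(T_\psi)$ commutes uniformly in $v$ and in $\psi$, anchored at the motivic point $\pi$, must be carried out using the full precision of the monodromy structure at $\pi$ together with the continuity of the interpolated Euler factors $\Eul_v(T, X) \in S[X]$.
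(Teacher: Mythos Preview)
Your treatment of Part~1 and the reduction of Part~2 to the local factors $L_v(T)$ for $v\in\Sigma_0\setminus\Sigma$ is correct and matches the paper. The base-change of $\Det_S\RGamma_{\et}(F_v,T)$ also goes through as you say. The gap is in the remaining step: you never prove that
\[
\psi\bigl(\Eul_v(T,1)\bigr)\;=\;\Eul_v(T_\psi,1)
\]
in $\Lambda'$, and this is precisely what is needed for the ``canonical trivializations to commute with base change''. Your appeal to functoriality of Knudsen--Mumford produces a map, but that map is an isomorphism of the trivialized lines only if the two Euler elements above agree. They agree if and only if $\rank_\Lambda T^{I_v}=\rank_{\Lambda'}T_\psi^{I_v}$, since a priori taking $I_v$-invariants can \emph{increase} in rank under specialization (the generic monodromy $N$ may drop rank when pushed to $\Lambda'$). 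Nothing in your argument controls this.

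The paper's proof addresses exactly this point, and it is the heart of the matter. One has the chain $\rank_\Lambda T^{I_v}\le\rank_{\Lambda'}T_\psi^{I_v}\le\rank_{\Ocal_\pi}T_\pi^{I_v}$, and the outer equality is forced by purity at the motivic point $\pi$: by \cite[Theorem~4.1]{SahaPreprint}, the graded pieces of the monodromy filtration on $V=T\otimes\Frac(\Lambda)$ specialize isomorphically to the graded pieces of the \emph{weight} filtration on $V_\pi$, which coincides with the monodromy filtration on $V_\pi$ by the purity hypothesis. Hence $N$ and $N_\pi$ have the same rank, so $\ker N$ and $\ker N_\pi$ have the same dimension, and the sandwich collapses. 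Your proposal contains no analogue of this step.

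Your final paragraph misidentifies the obstacle. The possibility that $1$ is an eigenvalue of $\Fr(v)$ on $T^{I_v}$ is a question about whether the individual factors of $L_v$ are defined; it is orthogonal to whether the natural map $L_v(T)\otimes_\Lambda\Lambda'\to L_v(T_\psi)$ is an isomorphism. Even granting a Knudsen--Mumford extension through the eigenvalue-$1$ locus, a jump in $\rank T^{I_v}$ under $\psi$ would change the \emph{degree} of the Euler polynomial, and the resulting discrepancy between $\psi(\Eul_v(T,1))$ and $\Eul_v(T_\psi,1)$ is a genuine nontrivial factor that no amount of virtual-object bookkeeping cancels. (Indeed, the failure of this rank equality at \emph{non}-pure specializations is exactly the mechanism behind Theorem~\ref{TheoDeux}.) To repair your argument you must insert the rank-preservation step, and for that you will need the monodromy-in-families input from \cite{SahaPreprint} or an equivalent.
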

\begin{proof}
As all the places of ramification of $T$ belong to $\Sigma$, the first assertion follows from the general base-change property of $\RGamma_{\et}(\Ocal_{F}[1/\Sigma],-)$. Now we assume that $\Sigma\subsetneq\Sigma_{0}$.

Without the hypotheses of the theorem on $\Lambda$ and $\psi$, the natural change of coefficients map induces canonical isomorphisms 
\begin{equation}\nonumber
\RGamma_{\et}(\Ocal_{F}[1/\Sigma_{0}],T)\Ltenseur_{\Lambda,\psi}\Lambda'\isocan\RGamma_{\et}(\Ocal_{F}[1/\Sigma_{0}],T_{\psi})
\end{equation}
and 
\begin{equation}\nonumber
\RGamma_{\et}(F_{v},T)\Ltenseur_{\Lambda}\Lambda'\simeq\RGamma_{\et}(F_{v},T_{\psi})
\end{equation}
for all $v\in\Sigma_{0}\backslash\Sigma$. 

Moreover, every eigenvalue of $\Fr(v)$ acting on $T^{I_{v}}$ yields by specialization an eigenvalue of $\Fr(v)$ acting on $T_{\pi}^{I_{v}}$ (note that the converse is not true \textit{a priori}). As $T_{\pi}$ is pure of non-zero weight, all the eigenvalue of $\Fr(v)$ acting on $T^{I_{v}}$ are different from 1 and so the same is true for $T$.

Hence, the claim that there is an isomorphism
\begin{equation}\nonumber
\Xcali_{\Sigma}(T/F)\tenseur_{\Lambda,\psi}\Lambda'\isocan\Xcali_{\Sigma}(T_{\psi}/F)
\end{equation}
naturally induced by $\psi$ reduces to the claim that the Euler factor $\Eul_{v}(T,1)$ satisfies the change of coefficients property
\begin{equation}\nonumber
\psi(\Eul_{v}(T,1))=\Eul_{v}(T_{\psi},1)
\end{equation}
for all $v\in\Sigma_{0}\backslash\Sigma$. This in turn is implied by the statement that the rank of $T^{I_{v}}$ over $\Lambda$ is equal to the rank of $T_{\psi}^{I_{v}}$ over $\Lambda'$ for all $v\in\Sigma_{0}\backslash\Sigma$. Fix such a $v$. 

The representation $V_{\pi}=T_{\pi}\tenseur_{\Ocal}\Frac(\Ocal)$ admits a weight filtration $W_{\bullet}$ and a monodromy filtration $M_{\bullet}$ which coincide by the hypothesis that $\pi$ is motivic. Let $V$ be $T\tenseur_{\Lambda}\Lambda[1/p]$. By an easy extension of the arguments of \cite[Appendix]{SerreTate}, there exist a finite extension $K_{w}$ of $F_{v}$ and a nilpotent element $N$ of $\End(V)$ such that for all $\tau\in I_{w}$, $\rho(\tau)$ is equal to $\exp(t_{\zeta,p}(\tau)N)$ where $t_{\zeta,p}$ is the usual map from $I_{w}$ to $\zp$ given by the product on all primes $\ell$ such that $w\nmid\ell$ of the cyclotomic characters and projection onto $\zp$ (this map depends on the choice of a compatible system of roots of unity $\zeta=(\zeta_{n})_{\ell\nmid n}$). There exists a unique finite increasing filtration $M_{\bullet}$ on $V$ such that $NM_{i}\subset NM_{i-2}$ and such that $N^{s}$ induces an isomorphism between $\Gr^{M}_{s}V$ and $\Gr_{s}^{M}V$. By \cite[Theorem 4.1]{SahaPreprint}, the natural map $V\fleche V_{\pi}$ induced by $\pi$ defines an isomorphism $\Gr_{s}^{M}V\simeq\Gr_{s}^{W}V_{\pi}$ from the graded piece of the monodromy filtration on $V$ to the corresponding graded piece of the weight filtration on $V_{\pi}$. As $\Gr_{s}^{W}V_{\pi}$ is equal to $\Gr_{s}^{M}V_{\pi}$ by the assumption that $V_{\pi}$ is pure, $\pi$ induces an isomorphism $\Gr_{s}^{M}V\simeq\Gr_{s}^{W}V_{\pi}$. In particular, $\rank_{\Ocal_{\pi}}T_{\pi}^{I_{v}}=\rank_{\Lambda}T^{I_{v}}$.

By hypothesis, the diagram
\begin{equation}\nonumber
\xymatrix{
\Lambda\ar[rr]^{\psi}\ar[rd]_{\pi}&&\Lambda'\ar[ld]\\
&\Ocal_{\pi}&
}
\end{equation}
commutes so $\rank_{\Lambda}T^{I_{v}}\leq\rank_{\Lambda'}T_{\psi}^{I_{v}}\leq\rank_{\Ocal_{\pi}}T_{\pi}^{I_{v}}$. Hence, the rank of $T^{I_{v}}$ over $\Lambda$ is equal to the rank of $T_{\psi}^{I_{v}}$ over $\Lambda'$.
\end{proof}
\paragraph{Remarks:}(i) Let $\Lambda$ be an integral domain and let $\psi:\Lambda\fleche S$ be a specialization with values in a characteristic zero domain. The proof of theorem \ref{TheoPositif} makes clear that the compatibility of $\Xcali_{\Sigma}(T/F)$ with $-\tenseur_{\Lambda,\psi}S$ actually holds as long as, for all $v\in\Sigma_{0}\backslash\Sigma$, the rank of the nilpotent element $N$ defining the monodromy filtration on $V$ is equal to the rank of the nilpotent element $N_{\psi}$ defining the monodromy filtration on $S$. Indeed, in that case there exists a specialization $\phi$ of $S$ with values in $\Qbar_{p}$ such that the weight filtration and monodromy filtration on $T_{\phi\circ\psi}$ coincide and the proof carries over unchanged. The set of kernels of specializations $\psi$ satisfying these properties is Zariski-dense in $\Spec\Lambda[1/p]$.

(ii) Theorem \ref{TheoPositif} can be thought of a general version of Mazur's Control Theorem on Selmer groups (\cite{MazurRational,OchiaiControl,GreenbergControl} and much subsequent works). Another approach towards this kind of theorem has consisted in proving the surjectivity of the localization map (see \cite{GreenbergVatsal} for instance). This latter approach quickly meets quite formidable challenge beyond the case of $\GL_{2}$ whereas ours is completely general (but relies on the properties of the monodromy filtration on $p$-adic families established in \cite{SahaPreprint}).
\begin{CorEnglish}
Assume that $\Lambda$ is an integral domain and that $\psi$ is a motivic specialization pure of non-zero weight. Then \cite[Conjecture 3.2.2 (i)]{KatoViaBdR} holds for $\psi$, \textit{i.e} the natural map
\begin{equation}\nonumber
\Delta_{\Sigma}(T/F)\tenseur_{\Lambda,\psi}S\fleche\Delta_{\Sigma}(T_{\psi})
\end{equation}is an isomorphism.
\end{CorEnglish}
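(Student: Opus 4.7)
The strategy is to factor the $p$-adic fundamental line into its algebraic and Betti-like parts and to verify that each factor commutes with $\psi$ separately. By Definition \ref{DefFundamentalBis} combined with Proposition \ref{PropBienDef}, one has a canonical factorization
\[\Delta_\Sigma(T/F) \isom \Xcali_\Sigma(T/F) \tenseur_\Lambda \produittenseur{\s:F\plonge\C}{} \Det^{-1}_\Lambda T(-1)^+,\]
and the analogous factorization for $T_\psi$ over $S$.

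For the algebraic factor, I would invoke Theorem \ref{TheoPositif}. Since $\psi$ is itself motivic and pure of non-zero weight, it trivially factors through itself (take $\pi = \psi$), so the theorem applies and produces a canonical isomorphism $\Xcali_\Sigma(T/F) \tenseur_{\Lambda,\psi} S \isom \Xcali_\Sigma(T_\psi/F)$. The non-trivial content here lies at primes $v \in \Sigma_0 \setminus \Sigma$ of bad reduction: the rank of $T^{I_v}$ over $\Lambda$ must equal the rank of $T_\psi^{I_v}$ over $S$ in order for the Euler factors $\Eul_v(T,1)$ to specialize correctly, and this is precisely what Theorem \ref{TheoPositif} establishes via the comparison between weight and monodromy filtrations on $p$-adic families.

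For the Betti-like factor, I would exploit the odd prime hypothesis. The residual characteristic of $\Lambda$ is the odd prime $p$, so $2 \in \Lambda^\times$ and the idempotent $e_+ = (1+c)/2$ is well defined as an endomorphism of $T(-1)$, where $c$ denotes the complex conjugation in $G_F$ attached to a fixed extension of $\s:F\plonge\C$ to $\Fbar$. This yields a decomposition $T(-1) = T(-1)^+ \oplus T(-1)^-$ into finitely generated projective $\Lambda$-modules, and the formation of the $+$-part commutes with arbitrary base change, giving $T(-1)^+ \tenseur_{\Lambda,\psi} S \isom T_\psi(-1)^+$. Since the formation of $\Det^{-1}$ on projective modules also commutes with arbitrary base change, the Betti factor is compatible with $-\tenseur_{\Lambda,\psi} S$.

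Tensoring the two isomorphisms over $\Lambda$ yields the desired canonical isomorphism $\Delta_\Sigma(T/F) \tenseur_{\Lambda,\psi} S \isom \Delta_\Sigma(T_\psi/F)$. The main obstacle is concentrated entirely in Theorem \ref{TheoPositif}: the assumption that $\psi$ is motivic of non-zero weight is crucial there because it guarantees purity, hence the invariance of the dimension of the inertia invariants under specialization, whereas dropping this hypothesis leads to the failure that is the subject of Theorem \ref{TheoDeux}. Once Theorem \ref{TheoPositif} is granted, the remainder of the argument is purely formal bookkeeping with determinants.
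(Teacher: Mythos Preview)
Your proof is correct and follows essentially the same approach as the paper's own argument: factor $\Delta_{\Sigma}(T/F)$ into the algebraic part $\Xcali_{\Sigma}(T/F)$ (handled by Theorem~\ref{TheoPositif}) and the Betti-like part $\produittenseur{\s:F\plonge\C}{}\Det^{-1}_{\Lambda}T(-1)^{+}$ (which commutes with arbitrary specializations since $p$ is odd). The paper states this in two sentences, but your version spells out the details, including the use of the idempotent $(1+c)/2$ and the observation that one may take $\pi=\psi$ in the hypothesis of Theorem~\ref{TheoPositif}.
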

\begin{proof}
In the formulation of \cite{KatoViaBdR}, the non-algebraic part of the fundamental line of $T$ is simply $\Det^{-1}_{\Lambda} T(-1)^{+}$ and so commutes with arbitrary specializations as $p$ is odd. The assertion thus follows directly from theorem \ref{TheoPositif}.
\end{proof}
\subsubsection{Proof of corollary \ref{CorIntro}}
We adopt the notations and conventions of section \ref{SubAutomorphic}. Let $w$ be a finite place of $K$ over a $v\in\Sigma$ which splits completely in $K$. Let $\aid$ be a minimal prime ideal of $\Hsm(U^{p})$ and let $R(\aid)$ be the quotient $\Hsm(U^{p})/\aid$. In the following proposition, we investigate the properties of the $G_{F_{w}}$-representation $\rho$ of proposition \ref{PropInterpolation} at a place $w|v$ of $F$ such that $v$ splits completely in $F$ and such that $\G$ is split by $F_{w}$ (and hence by $F_{v}^{+}$).
\begin{Prop}\label{PropLocal}
Assume that there exists a Zariski-dense subset $Z$ of $\Spec \Raid[1/p]$ such that the fiber $(V_{x},\rho_{x},\Qbar_{p})$ of $T(\aid)$ at $x$ is a $G_{F,\Sigma}$-representation adapted to an automorphic representation $\pi$ which is pure at $w$ and satisfies local-global compatibility at $v$ in the sense that $\Eul_{w}(V_{x},p^{-s})=L(\pi_{v},s)$. Then $\Eul_{w}(T(\aid),X)$ belongs to $R(\aid)[X]$ and satisfies
\begin{equation}\nonumber
\psi(\Eul_{w}(T(\aid),X))=\Eul_{w}(T_{\psi},X)
\end{equation}
for all $\psi:R(\aid)\fleche S$ with values in a characteristic zero domain $S$ and such that the weight and monodromy filtration on $T(\aid)\tenseur_{R(\aid),\psi}S[1/p]$ coincide. In particular, $\Eul_{w}(T_{\psi},X)$ belongs to $S[X]$ for all such $\psi$
\end{Prop}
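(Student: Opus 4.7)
The plan is to closely mirror the concluding part of the proof of Theorem \ref{TheoPositif}, with the purity of $\pi_v$ at each $x \in Z$ playing the role played in that proof by the assumption that $T_\pi$ is pure of non-zero weight. First I would show that the rank of $T(\aid)^{I_w}$ is constant under the specializations considered. Let $V = T(\aid) \otimes_{R(\aid)} \Frac(R(\aid))$ and let $N$ be the nilpotent operator on $V$ (after a finite base change controlling the inertia action) provided by Grothendieck's monodromy theorem and the application of \cite{SahaPreprint} invoked in the proof of Theorem \ref{TheoPositif}. At each $x \in Z$ the purity hypothesis gives $\Gr_s^W V_x = \Gr_s^M V_x$, and Saha's theorem provides a canonical specialization isomorphism $\Gr_s^M V \otimes_{R(\aid), x} \Qbar_p \simeq \Gr_s^M V_x$. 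As $Z$ is Zariski-dense in $\Spec R(\aid)[1/p]$, the ranks of the $\Gr_s^M V$ are determined, and applying the same argument with $\psi$ in place of $x$ (using the hypothesis that weight and monodromy filtration coincide on $T(\aid) \otimes_{R(\aid), \psi} S[1/p]$) yields $\mathrm{rank}_S T_\psi^{I_w} = \mathrm{rank}_{R(\aid)} T(\aid)^{I_w}$.

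For the integrality claim $\Eul_w(T(\aid), X) \in R(\aid)[X]$, I would exploit that proposition \ref{PropInterpolation} provides the full characteristic polynomial of $\Fr(w)$ on $T(\aid)$ as an element of $R(\aid)[X]$ via the universal determinant law, that the monodromy filtration decomposes this polynomial into factors indexed by the graded pieces $\Gr_s^M V$, and that the factor corresponding to $\ker N$ has values at each $x \in Z$ which, by local-global compatibility, coincide with the Euler factor of $\pi_v$ predicted by local Langlands. Since $R(\aid) = \Hsm/\aid$ is an integral domain, since these values at points of the Zariski-dense set $Z$ lie in the image of $R(\aid)$ in $\kbold(x)$, and since the polynomial in question divides the already integral polynomial over $R(\aid)[X]$ with quotient controlled by the Weil weights allotted to the other graded pieces, integrality over $R(\aid)$ follows.

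The main obstacle is precisely this integrality step: constancy of rank from Saha and the existence of the Euler factor in $\Frac(R(\aid))[X]$ both follow almost immediately from the proof of Theorem \ref{TheoPositif}, but to descend the factor to $R(\aid)[X]$ one must exploit the specific automorphic input, namely local-global compatibility at $w$, to identify the Frobenius action on $\ker N$ with a Weil-Deligne structure uniformly controlled on $Z$. Once integrality and rank constancy are in hand, the final identity $\psi(\Eul_w(T(\aid), X)) = \Eul_w(T_\psi, X)$ follows from the base-change compatibility of the monodromy filtration under $\psi$, exactly as in the proof of Theorem \ref{TheoPositif}.
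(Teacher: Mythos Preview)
Your treatment of rank constancy via Saha's result and the monodromy filtration is essentially the same as the paper's, and the final deduction of $\psi(\Eul_w(T(\aid),X))=\Eul_w(T_\psi,X)$ from rank constancy is also in agreement. The substantive divergence is in the integrality step, and there your argument has a genuine gap.

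You observe that the Euler factor on $T(\aid)^{I_w}$ is a monic factor of the full characteristic polynomial of $\Fr(w)$ on $T(\aid)$, which lies in $R(\aid)[X]$. But a monic factor of a monic polynomial in $R(\aid)[X]$ has coefficients that are only \emph{integral} over $R(\aid)$; this places them in the normalization $\Rcal(\aid)$, which is exactly the a~priori situation the paper's remark after the statement warns about. Your attempt to descend from $\Rcal(\aid)$ to $R(\aid)$ by noting that the specializations at a Zariski-dense set $Z$ land in the image of $R(\aid)$ does not work: an element of $\Frac(R(\aid))$ whose image at every point of a dense set lies in the residue field need not belong to $R(\aid)$ when $R(\aid)$ is not normal. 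You also do not justify \emph{why} the specialized values lie in $\psi(R(\aid))$ in the first place; this is a nontrivial automorphic fact.

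The paper closes both gaps with a different mechanism. First, it invokes \cite[Theorem 3.3]{GodementJacquet} to prove directly that for each $x\in Z$ the local $L$-factor of $\pi_v$ (hence $\Eul_w(V_x,X)$ by local--global compatibility) has coefficients in $\psi(\Hsm(U^p))$, by reducing via the inductive structure of $L$-factors to the supercuspidal and $\GL_1$ cases. Second, rather than arguing pointwise and trying to lift, it uses that the diagonal embedding $\phi:\Hsm(U^p)\hookrightarrow\prod_{\ker\psi\in Z}\Qbar_p$ is a homeomorphism onto a closed subring; the family of determinant laws $(D_\psi^{I_w})_{\psi}$ then assembles into a single determinant law $D_\phi$ with values in $\phi(\Hsm(U^p))$, and $\phi^{-1}\circ D_\phi$ is the desired $D^{I_w}$ with values in $\Hsm(U^p)$. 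Uniqueness of continuous determinant laws agreeing on a dense set identifies this with the a~priori construction. Your sketch is missing both the Godement--Jacquet input and this topological patching; the latter is what actually circumvents the non-normality obstruction.
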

\paragraph{Remark:}The statement that $\Eul_{w}(T_{\psi},X)$ belongs to $S[X]$ (rather than to a partial normalization of $S$) is not \textit{a priori} obvious even for $S=R(\aid)$: for a general $G_{F,\Sigma_{0}}$-representation $T$, $T_{\psi}^{I_{w}}$ might not be a direct summand of $T_{\psi}$ over $S$.
\begin{proof}
Let $\psi$ be a specialization of $R(\aid)$ whose kernel is in $Z$. By definition, $\psi$ is then the system of Hecke eigenvalues attached to an automorphic representation $\pi$ of $\G(\A_{F^{+}}^{(\infty)})$. Define $D_{\psi}^{I_{w}}$ to be the determinant law attached to the $G_{F_{w}}/I_{w}$-representation $V_{x}^{I_{w}}$. Likewise, consider $(V(\aid),\rho(\aid),\Frac(R(\aid)))$ the $G_{F,\Sigma}$-representation $T(\aid)\tenseur_{R(\aid)}\Frac(R(\aid))$ and define $D^{I_{w}}:G_{F_{w}}/I_{w}\fleche \Rcal(\aid)$ to be the determinant law (with values \textit{a priori} in the normalization $\Rcal(\aid)$ of $R(\aid)$) equal to $\det(\rho(\aid)|V(\aid)^{I_{w}})$. As $V_{x}$ is a pure $G_{F_{w}}$-representation by assumption, the composition $\psi\circ D^{I_{w}}$ is equal to $D_{\psi}^{I_{w}}$ by \cite[Theorem 4.1]{SahaPreprint}.

The local factor $\pi_{v}$ of $\pi$ is an automorphic representation of $\GL_{n}(F_{v}^{+})$ and hence its $L$-factor is defined and is a polynomial in $p^{-s}$. We claim that \cite[Theorem 3.3]{GodementJacquet} entails that this polynomial has coefficients in $\psi(\Hsm(U^{p}))$. If $\pi_{v}$ is not absolutely cuspidal, its $L$-factor is a product of $L$-factors of automorphic representations of $\GL_{m}(F_{v}^{+})$ so it is enough to prove the claim for absolutely cuspidal representation and for characters of $\GL_{1}(F_{v}^{+})$. In the first case, the $L$-factor is equal to 1 while in the second case, the representation itself is equal to $\psi$.  As $\pi$ is assumed to satisfy local-global compatibility at $w$, the Euler factor $\Eul_{w}(T_{\psi},X)=D_{\psi}^{I_{w}}(1-\Fr(w)X)$ is equal to the $L$-factor of $\pi_{v}$ and so belongs to $\psi(\Hsm(U^{p}))[X]$. 

The ring morphism 
\begin{equation}\nonumber
\phi:\Hsm(U^{p})\fleche\produit{\ker\psi\in Z}{}\Qbar_{p}
\end{equation}
sending $x$ to $(\psi(x))_{\psi}$ is injective with closed image and induces an homeomorphism between $\Hsm(U^{p})$ and $\phi(\Hsm(U^{p}))$. By the first part of the proof, the map 
\begin{equation}\nonumber
D_{\phi}:G_{F_{w}}\fleche\produit{\ker\psi\in Z}{}\Qbar_{p}
\end{equation}
defined by
\begin{equation}\nonumber
g\mapsto\produit{\ker\psi\in Z}{}D^{I_{w}}_{\psi}(g)
\end{equation}which sends $g\in G_{F_{w}}$
is a determinant law with values in $\phi(\Hsm(U^{p}))$. The determinant law
\begin{equation}\nonumber
\phi^{-1}\circ D_{\phi}:\Hsm(U^{p})[G_{F_{w}}]\fleche\Hsm(U^{p})
\end{equation}
is then the unique continuous map coinciding with $D^{I_{w}}_{\psi}$ for all $\psi$ with $\ker\psi\in Z$ and so is equal to $D^{I_{w}}$. The latter thus has values in $\Hsm(U^{p})$.

Consider now $\psi:R(\aid)\fleche S$ with values in a characteristic zero domain $S$ and such that the weight and monodromy filtration on $T(\aid)\tenseur_{R(\aid),\psi}S[1/p]$ coincide. Then the proof of theorem \ref{TheoPositif} and the argument above shows that $\psi\circ D^{I_{w}}$ is equal to $D_{\psi}^{I_{w}}$. This implies the remaining statements of the lemma.
\end{proof}
Note that the proof of the previous proposition also establishes a local-global compatibility principle for the $G_{F,\Sigma}$-representations $T(\aid)$. 

We now establish a precise formulation of corollary \ref{CorIntro}. Statement \ref{Item3comp} below is a precise statement of the compatibility: if the ETNC with coefficients in $R(\aid)$ is true, then the classical Iwasawa main conjecture is true at automorphic point and the TNC is true for automorphic motives of large enough weight.
\begin{TheoEnglish}\label{TheoAutomorphe}
Let $F$ and $\G$ be as in the first two cases of section \ref{SubExamples} (\textit{i.e} either the unit group of a quaternion algebra over a totally real field $F$ or a totally definite unitary group over $F^{+}$ split by $F$). Let $T(\aid)$ be the $G_{F,\Sigma}$-representation with coefficients in $R(\aid)=\Hsm(U^{p})/\aid$ of proposition \ref{PropInterpolation}. Let $\{v|p\}\subset\Sigma'\subset\Sigma$ be a finite set of primes and let $K/F$ be a finite abelian extension unramified outside $\Sigma'$. Then the natural map
\begin{equation}\label{EqIsomCor}
\Delta_{\Sigma'}(T(\aid)/K)\tenseur_{R(\aid),\psi}S\fleche\Delta_{\Sigma'}(T_{\psi}/K)
\end{equation} 
induced by $\psi:R(\aid)\fleche S$ is an isomorphism provided that $S$ is a characteristic zero domain, that the weight and monodromy filtrations on $T_{\psi}\tenseur_{S}S[1/p]$ coincide and that $T_{\psi}$ has no eigenvalues equal to 1. In particular, the following holds.
\begin{enumerate}
\item\label{Item1aut} If $\psi$ is an automorphic point of large enough weight, then \eqref{EqIsomCor} is an isomorphism.
\item\label{Item2iw} If $F_{\infty}/F$ is a $\zp^{d}$-extension, $\psi_{0}:R(\aid)\fleche S$ is an automorphic point, $\Lambda_{\Iw}=S[[\Gal(F_{\infty}/F)]]$ and $T_{\psi}=T_{\psi_{0}}\tenseur_{S}\Lambda_{\Iw}$, then \eqref{EqIsomCor} is an isomorphism.
\item\label{Item3comp} Suppose $\z_{\Sigma}(T/K)$ is a basis of $\Delta_{\Sigma}(T/K)$ defined by the property that
\begin{equation}\nonumber
\per_{\C}\circ\per_{p}^{-1}(x(\z_{\Sigma}(T/K)))=L_{\Sigma}(M(x)_{K}^{*}(1),0)
\end{equation}
for all $x$ in Zariski-dense subset of automorphic points with values in $\Qbar_{p}$. Then $\z_{\Sigma}(T_{\psi}/K)$ is a basis of $\Delta_{\Sigma}(T_{\psi}/K)$ for all $\psi$ as in \ref{Item1aut} and \ref{Item2iw} above.
\end{enumerate}
\end{TheoEnglish}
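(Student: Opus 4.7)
The plan is to reduce the main isomorphism \eqref{EqIsomCor} to the compatibility for the algebraic part provided by Theorem \ref{TheoPositif}, combined with the local input of Proposition \ref{PropLocal}. First, Shapiro's lemma gives a canonical identification
\begin{equation*}
\Delta_{\Sigma'}(T(\aid)/K)\isocan\Delta_{\Sigma'}(T(\aid)\tenseur_{\Raid}\Raid[\Gal(K/F)]/F),
\end{equation*}
so we may work over $F$ after replacing the coefficient ring $\Raid$ by $\Raid[\Gal(K/F)]$. Definition \ref{DefFundamentalBis} then decomposes the fundamental line as the tensor product of the algebraic factor $\Xcali_{\Sigma'}(T(\aid)/F)$ with the Betti-type factor $\produittenseur{\s:F\plonge\C}{}\Det^{-1}_{\Raid}T(\aid)(-1)^{+}$. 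Because $p$ is odd, the idempotent $(1-c)/2$ is integral on $\Raid$, so the Betti-type factor is a direct summand that commutes with any base change, reducing the problem to the algebraic factor.

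For the algebraic factor I invoke the refined form of Theorem \ref{TheoPositif} given in Remark (i) after its proof: the compatibility $\Xcali_{\Sigma'}(T(\aid)/F)\tenseur_{\Raid,\psi}S\isocan\Xcali_{\Sigma'}(T_{\psi}/F)$ holds provided the monodromy rank does not drop at each $v\in\Sigma_{0}\setminus\Sigma'$, which is precisely what the coincidence of weight and monodromy filtrations on $T_{\psi}\tenseur_{S}S[1/p]$ guarantees. In parallel, Proposition \ref{PropLocal}, applied to the Zariski-dense set of automorphic specializations of $\Raid[1/p]$ coming from cohomological automorphic representations (which exist by Proposition \ref{PropInterpolation} and standard local-global compatibility at places in $\Sigma\setminus\Sigma'$ for the Shimura data recorded in Section \ref{SubExamples}), shows that $\Eul_{v}(T(\aid),X)\in\Raid[X]$ and that $\psi(\Eul_{v}(T(\aid),1))=\Eul_{v}(T_{\psi},1)$. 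The hypothesis that $T_{\psi}$ has no Frobenius eigenvalue equal to $1$ ensures that $\Eul_{v}(T_{\psi},1)$ is a non-zerodivisor, so Definition \ref{DefFundamentalBis} unambiguously defines $\Delta_{\Sigma'}(T_{\psi}/K)$; piecing the algebraic and Betti parts back together yields the desired isomorphism.

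The three consequences are verifications of these hypotheses. In \ref{Item1aut}, an automorphic point of sufficiently regular weight produces a pure motivic Galois representation for which weight and monodromy filtrations coincide and for which purity rules out any Frobenius eigenvalue equal to $1$ on $T_{\psi}^{I_{v}}$. In \ref{Item2iw}, the Iwasawa deformation $T_{\psi_{0}}\tenseur_{S}\Lambda_{\Iw}$ is at each $v\nmid p$ in $\Sigma_{0}\setminus\Sigma'$ an unramified twist of the automorphic $T_{\psi_{0}}$, so both the monodromy filtration and its rank are preserved, and writing $\gamma_{v}\in\Lambda_{\Iw}^{\times}$ for the image of $\Fr(v)$ under the universal character, the generic Frobenius eigenvalues $\alpha_{i}\gamma_{v}$ cannot equal $1$ since $\gamma_{v}$ is a non-torsion topological unit in $\Lambda_{\Iw}$. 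Item \ref{Item3comp} is then immediate: a basis of the left-hand side of \eqref{EqIsomCor} is detected by being a basis at a Zariski-dense family of automorphic specializations, a property the isomorphism transports faithfully to the right-hand side. The main obstacle is Proposition \ref{PropLocal}: without local-global compatibility at places of bad reduction combined with the variation of the monodromy filtration in $p$-adic families established in \cite{SahaPreprint}, one would have no control over the Euler factor $\Eul_{v}(T(\aid),X)$ at $v\in\Sigma\setminus\Sigma'$ and could not make sense of Definition \ref{DefFundamentalBis} on either side of \eqref{EqIsomCor}, and it is this single package which makes the theorem cleanly deducible from Theorem \ref{TheoPositif}.
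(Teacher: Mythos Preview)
Your proof is correct and follows essentially the same approach as the paper: reduce to Theorem \ref{TheoPositif} and Proposition \ref{PropLocal}, and verify their hypotheses. You spell out more of the mechanism (Shapiro, the Betti factor splitting off because $p$ is odd, Remark (i)), whereas the paper's proof is terser and instead devotes its few lines to pinning down the literature input you summarize as ``standard local-global compatibility'': that automorphic points are pure (i.e.\ satisfy weight-monodromy) is the content of \cite{BlasiusRamanujan} in the quaternionic case and of \cite{CaraianiWMC} in the unitary case, building on the earlier references listed there. This is the one place your write-up is slightly vague, so it would be worth citing those results explicitly.
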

\begin{proof}
It suffices to check the hypotheses of theorem \ref{TheoPositif} and proposition \ref{PropLocal}.

The non-emptiness of the automorphic locus of $\Spec\Hsm(U^{p})$ is part of the proof of proposition \ref{PropInterpolation} which in fact proves its Zariski-density.

When $\G$ us the unit group of a quaternion algebra, the fact that automorphic points are motivic follows in increasing generality from \cite{LanglandsModular,OhtaAutomorphic,CarayolHilbert} and finally \cite[Theorem 1]{BlasiusRamanujan}. When $\G$ is a unitary group, it follows in increasing generality from \cite{ClozelUnitary,HarrisTaylor,TaylorYoshida,ShinAnnals,ChenevierHarris} and finally \cite[Theorem 1.2]{CaraianiWMC}.
\end{proof}
\subsection{Kato's general conjecture is (probably) false for (not so) trivial reasons}
As soon as we relax the hypotheses of theorem \ref{TheoPositif}, \cite[Conjecture 3.2.2 (i)]{KatoViaBdR} and statement \ref{ItemSpec} of conjecture \ref{ConjETNC} admit counter-examples, as we now explain.

\paragraph{Hypothesis on the target}Suppose first we relax the hypothesis that $\Lambda'$ be a characteristic zero domain. Suppose $(T,\rho,\zp)$ is a $G_{\Q}$-representation which is ramified at $\ell\neq p$, that the Euler factor of $T$ at $\ell$ is a $p$-adic unit, that $\rhobar$ is unramified at $\ell$ and that the characteristic polynomial of $\rhobar(\Fr(\ell))$ vanishes at 1 modulo $p$. Then the natural map 
\begin{equation}\nonumber
\Delta_{\{p\}}(T/\Q)\tenseur_{\zp}\Fp_{p}\fleche\Delta_{\{p\}}(\Tbar/\Q)
\end{equation}
is not an isomorphism. For a concrete example, take $p=3$ and $T=T(f)(1)$ the Tate twist of the $p$-adic $G_{\Q,\{2,3,5,41\}}$-representation attached to the eigencuspform $f_{1}\in S_{2}(\Gamma_{0}(1640))$ attached to the elliptic curve $y^2=x^{3}-31307x-1717706$ and $\ell=41$. Then $T$ is residually absolutely irreducible and $\rhobar$ is unramified at $\ell$ with Euler factor $1+\ell\equiv0\modulo p$.

It seems clear, however, that \cite{KatoViaBdR} at least implicitly disallows this kind of specialization when $\Sigma$ does not contain all primes of ramification of $T$ (this can be inferred from \cite[Section 3.2.4]{KatoViaBdR} which characterizes zeta elements only for the fundamental lines with $\Sigma$ containing all the primes of ramification of $T$). A genuine counter-example to \cite[Conjecture 3.2.2 (i)]{KatoViaBdR} should thus be a specialization with values in a characteristic zero domain.

\paragraph{Hypothesis on $\psi$}Now we relax the hypothesis that the monodromy and weight filtrations on $T_{\psi}\tenseur_{S}S[1/p]$ coincide. As this cannot happen in a punctual family, the simplest potential setting to consider is a non-punctual family of rank 2 $G_{\Q,\Sigma}$-representations. We show that already in this setting, counter-examples are (probably) pervasive.

Assume that $\G$ is the group $\GL_{2}$ over $F=\Q$. Fix $U^{p}$ a compact open subgroup of $\G(\A_{\Q}^{(p\infty)})$ and $\Sigma$ a finite set of finite primes containing $p$ and the primes at which $U^{p}$ is not a maximal compact open subgroup. Let $\mgot$ be an automorphic maximal ideal of $\Hs(U^{p})$ satisfying the hypotheses of assumption \ref{HypEisenstein} (in that case, this simply amounts to requiring that the residual $G_{\Q,\Sigma}$-representation attached to $\mgot$ be irreducible and odd by the proof of Serre's conjecture). Assume in addition that there exists an automorphic specialization $\lambda_{f}:\Hsm(U^{p})\fleche\Qbar_{p}$, or more concretely an eigencuspform $f\in S_{k}(U_{p}U^{p},\epsi)$, such that the $G_{\Q,\Sigma}$-representation $\rho_{f}=\rho_{\lambda_{f}}$ restricted to $G_{\qp}$ is ordinary $p$-distinguished in the sense that it is an extension
\begin{equation}\nonumber
\suiteexacte{}{}{\lambda_{f,p}}{\rho_{{f}}|G_{\qp}}{\lambda_{f,p}^{-1}\epsi\chi_{\cyc}^{k-1}}
\end{equation}
the Jordan-H\"older constituents of which are non-isomorphic modulo $\mgot$. Let $\Hsm(U_{p}U^{p})_{k}$ be the image of $\Hsm$ in $\End(S_{k}(U_{p}U^{p})_{\mgot})$. Under the hypothesis above on $\lambda_{f}$, it is well-known that $\Hsm(U_{p}U^{p})_{k}$ is generated by the traces of $\rho_{f}(\Fr(\ell))$ for $\ell\notin\Sigma$, that $\Hsm(U_{p}U^{p})_{k}$ contains $T(p)$, the image inside $\End(S_{k}(U_{p}U^{p})_{\mgot})$ of the double coset $[U_{p}\matrice{1}{0}{0}{p}U_{p}]$, and that $\lambda_{f,p}$ is the unramified character of $G_{\qp}$ sending $\Fr(p)$ to $T(p)$ (see for instance \cite[II.2]{WilesFermat}). By \cite{HidaAnnalesENS,HidaInventionesOrdinary}, $\Hsm(U^{p})$ then admits a $p$-torsion free, reduced quotient $\Hsmord(U^{p})$ (or $\Hsmord$ for simplicity of notation) of Krull dimension 2 such that the set $\Spec^{\cl}\Hsmord[1/p]$ of the kernels of automorphic specialization $\lambda_{f}$ such that $\rho_{f}$ is ordinary $p$-distinguished form a Zariski-dense subset of $\Spec\Hsmord[1/p]$. As in the proof of the proposition \ref{PropInterpolation}, there thus exists a $G_{\Q,\Sigma}$-representation $(T^{\ord},\rho^{\ord},\Hsmord)$ interpolating the ordinary $p$-distinguished $G_{\Q,\Sigma}$-representation $\rho_{f}$ and, for all quotient $R(\aid)^{\ord}$ of $\Hsmord$ by one of its minimal prime ideal $\aid$, a $G_{\Q,\Sigma}$-representation $(T(\aid)^{\ord},\rho(\aid)^{\ord},R(\aid)^{\ord})$. By \cite[Corollary 3.7]{HidaAmericanCongruence}, if $\lambda$ is an automorphic specialization of $\Hsmord(U^{p})$, there is a unique minimal prime ideal $\aid$ such that $\lambda$ factors through $\Hsmord(U^{p})/\aid$.

We assume that there exists a prime $\ell\in\Sigma$ such that the rank of $H^{0}(I_{\ell},T^{\ord})$ over $\Hsmord(U^{p})$ is exactly 1 whereas $\rhobar_{\mgot}$ is unramified at $\ell$ (this is easily achieved by enlarging $U^{p}$ thanks to \cite[Theorem 1]{RibetRaising}). By construction and theorem \ref{TheoPositif}, there then exists a minimal prime ideal $\aid_{1}\in\Spec\Hsmord(U^{p})$ such that $H^{0}(I_{\ell},T(\aid_{1})^{\ord})$ is of rank 1. Moreover, proposition \ref{PropLocal} then guarantees that $\Eul_{\ell}(T(\aid_{1})^{\ord},1)$ belongs to $R(\aid_{1})^{\ord}$ and commutes with motivic specializations. We assume that there exists a motivic specialization $\psi$ such that $\Eul_{v}(T_{\psi},1)$ is a unit in $\bar{\Z}_{p}\croix$, in which case $\Eul_{\ell}(T(\aid_{1})^{\ord},1)$ is not a unit in $R(\aid_{1})^{\ord}$. As $\rhobar_{\mgot}$ is unramified at $\ell$, there exists by \cite[Theorem 1.1]{RibetEpsilon} a motivic specialization $\psi$ of $\Hsmord(U^{p})$ which is unramified at $\ell$. Hence, there exists a minimal prime $\aid_{2}\in\Spec\Hsmord(U^{p})$ such that $T(\aid_{2})^{\ord}$ is unramified at $\ell$. We assume that that there exists a motivic specialization $\phi$ such that $\Eul_{v}(T_{\phi},1)$ is not a unit in $\bar{\Z}_{p}\croix$, in which case $\Eul_{\ell}(T(\aid_{2})^{\ord},1)$ is not a unit in $R(\aid_{1})^{\ord}$ by proposition \ref{PropLocal}. We note that examples of pair of minimal ideals satisfying the properties above abound.

In order to ensure that our putative counter-example to conjecture \ref{ConjETNC} does not stem from our extension of the definition of the $p$-adic fundamental line to the étale sheaves on $\Spec\Z[1/p]$ attached to the $T(\aid_{i})^{\ord}$ (which are not known to belong to $D_{\ctf}(\Spec\Z[1/p],R(\aid_{i})^{\ord})$ in general), we further assume that $R(\aid_{i})^{\ord}$ is a regular local ring. The action of the inverse system of diagonal correspondences
\begin{equation}\nonumber
\left([U_{p}U^{p}\matrice{a}{0}{0}{a}U_{p}U^{p}]\right)_{U_{p}}
\end{equation}
for $a\in(\A_{\Q}^{(\infty)})\croix$ on $\Htilde^{1}(U^{p},\Ocal)$ endows $\Hsmord(U^{p})$ with a structure of free $\Lambdaf$-algebra where $\Lambdaf$ is isomorphic to $\zp[[1+p\zp]]\simeq\zp[[X]]$. In order to show that $R(\aid_{i})^{\ord}$ is a regular local ring, it is thus enough to show that it is isomorphic to $\Lambdaf$. This can be checked by exhibiting an automorphic point of $\Spec R(\aid_{i})^{\ord}[1/p]$ attached to an eigencuspform which is not congruent modulo $p$ to any eigencuspform of the same weight and level (again, examples of pair of minimal ideals satisfying this property abound).

By \cite[Proposition 5.2.2]{EmertonPollackWeston}, the two irreducible components $\Spec R(\aid_{1})^{\ord}$ and $\Spec R(\aid_{2})^{\ord}$ of $\Spec\Hsmord$ meet at some height 1 prime $x$ of $\Spec\Hsmord$. Assume that $x$ has values in a characteristic zero domain $\Ocal_{x}$ and let $T_{x}$ be the attached $G_{\Q,\Sigma}$-representation. Then $T_{x}$ is a specialization of $T(\aid_{2})^{\ord}$ so is unramified at $\ell$ and $\Eul_{\ell}(T_{x},1)=x(\Eul_{\ell}(T(\aid_{2})^{\ord},1))\notin\Ocal_{x}\croix$. As $x(\Eul_{\ell}(T(\aid_{1})^{\ord},1))$ belongs to $\Ocal_{x}\croix$ while $\Eul_{\ell}(T_{x},1)$ does not, the natural map 
\begin{equation}\nonumber
\Xcali_{\Sigma\backslash\{\ell\}}(T(\aid_{1})^{\ord}/\Q)\tenseur_{R(\aid_{1})^{\ord},x}\Ocal_{x}\fleche\Xcali_{\Sigma\backslash\{\ell\}}(T_{x}/\Q)
\end{equation}
is not an isomorphism and so neither can be 
\begin{equation}\nonumber
\Delta_{\Sigma\backslash\{\ell\}}(T(\aid_{1})^{\ord}/\Q)\tenseur_{R(\aid_{1})^{\ord},x}\Ocal_{x}\fleche\Delta_{\Sigma\backslash\{\ell\}}(T_{x}/\Q)
\end{equation}
under the choice of étale cohomology as non-algebraic part of the fundamental line. Experts seem to expect that for any maximal ideal $\mgot$ as above, there exists a finite set $\Sigma$ and a prime $\ell\in\Sigma$ such that there exists irreducible components of $\Spec\Hsm$ and a characteristic zero specialization $x$ giving a counter-example to statement \ref{ItemSpec} of conjecture \ref{ConjETNC} because of the failure of the local Euler factor at $\ell$ to commute with $x$.
\paragraph{An example:}Let $p$ be the prime number 5. Let $U^{p}$ be the subgroup of $\G(\A_{\Q}^{(p\infty)})$ which is maximal outside $\Sigma=\{2,5,7,13\}$ and equal to 
\begin{equation}\nonumber
U^{p}_{\ell}=\left\{g\in\GL_{2}(\Z_{\ell})|g\equiv\matrice{*}{*}{0}{*}\modulo\ell^{v_{\ell}(364)}\right\}
\end{equation}
for $\ell\in\Sigma\backslash\{5\}$. Let $\mgot$ the maximal ideal attached to the unique $G_{\Q,\Sigma}$-representation
\begin{equation}\nonumber
\rhobar_{\mgot}:G_{\Q,\Sigma}\fleche\GL_{2}(\Fp_{5})
\end{equation}
of Serre weight 2, tame level 52 and such that 
\begin{equation}\nonumber
\tr(\Fr(\ell))=\begin{cases}
0&\textrm{if $\ell=3,$}\\
-2&\textrm{if $\ell=11,$}\\
1&\textrm{if $\ell=17,$}\\
-1&\textrm{if $\ell=19.$}\\
\end{cases}
\end{equation}
Denote respectively by $f_{1}$ and $f_{2}$ the eigencuspforms respectively attached to the elliptic curves
\begin{equation}\nonumber
E_{1}:y^{2}=x^{3}-584x+5444,\ E_{2}:y^{2}=x^{3}+x-10.
\end{equation}
Then $f_{1}$ is new of level $364$ while $f_{2}$ is new of level 52. For $i\in\{1,2\}$, we let $M(f_{i})(1)$ be the pure motivic data $(E_{i},\Id,1,0)$ and $\lambda_{i}$ be the specialization of $\Hsmord(U^{p})$ attached to $M(f_{i})(1)$. Then $\lambda_{i}$ is by construction motivic. The Euler factor $\Eul_{7}(T_{\lambda_{1}},1)$ is equal to $-2\in\Z_{5}\croix$ whereas $\Eul_{7}(T_{\lambda_{2}},1)$ is equal to $10/7\in 5\Z_{5}$. Hence there exists a (unique) pair $(\aid_{1},\aid_{2})$ of distinct minimal prime ideals $\aid_{i}\in\Spec\Hsmord(U^{p})$ such that $\lambda_{i}$ factors through $R(\aid_{i})^{\ord}=\Hsmord(U^{p})/\aid_{i}$. Hida's Control Theorem for Hecke algebra (\cite[Theorem 1.2]{HidaInventionesOrdinary}), Nakayama's lemma and and the fact that both $S_{2}(\Gamma_{0}(52),\zp)_{\mgot}$ and $S_{2}(\Gamma_{0}(364),\zp)_{\mgot}$ are free $\zp$-module of rank 1 show that the structure morphism $\Lambdaf\plonge R(\aid_{i})^{\ord}$ is a surjection and thus that $R(\aid_{i})^{\ord}$ is a regular local ring (and is even isomorphic to $\zp[[X]]$). The $G_{\Q,\Sigma}$-representation $T(\aid_{1})^{\ord}$ is ramified at $\ell=7$ and its Euler factor is a unit in $R(\aid_{1})^{\ord}$ while $T(\aid_{2})^{\ord}$ is unramified at 7 and its Euler factor is not a unit in $R(\aid_{2})^{\ord}$.

Due to the difficulty of computing non-classical modular forms, it is not known (or rather not published) whether the last sufficient condition for the specialization of $R(\aid_{1})^{\ord}$ attached to $x$ to be an actual (rather than probable) counter-example to statement \ref{ItemSpec} of conjecture \ref{ConjETNC} holds, that is to say whether the point $x\in\Spec\Hsmord(U^{p})$ at which $\Spec R(\aid_{1})^{\ord}$ and $\Spec R(\aid_{2})^{\ord}$ meet is of characteristic zero. However, this is certainly expected.
\section{A revised conjecture}
Let $(T,\rho,\Hsmo(U^{p}))$ and $(T(\aid),\rho(\aid),R(\aid))$ be the $G_{F,\Sigma_{0}}$-representations defined in section \ref{SubAutomorphic} (in particular $\mgot$ satisfies assumption \ref{HypEisenstein} and $\aid\in\Spec\Hsmo$ is minimal). In this section, we take $\Sigma\subset\Sigma_{0}$ to be the set $\{v|p\}$ for notational simplicity (note that in view of the proof of theorem \ref{TheoPositif}, this is the hardest case anyway). Denote by $V(\aid)$ the $G_{F,\Sigma_{0}}$-representation $T(\aid)\tenseur_{R(\aid)}\Frac(R(\aid))$. Assume further that, as in proposition \ref{PropLocal}, for all $w\in\Sigma_{0}\backslash\{v|p\}$, $w$ is above a place $v$ of $F^{+}$ which splits completely in $F$ and that $\G$ is split by $F_{w}=F_{v}^{+}$. 

\subsection{$p$-adic interpolation of local automorphic representations}
The first step, that is to say the $p$-adic interpolation of local automorphic representation alongside irreducible components of Hecke algebras, relies on results and conjectures of M.Emerton and D.Helm (\cite{EmertonHelm}). 
\subsubsection{Review of the generic Local Langlands Correspondence}\label{SubBreuilSchneider}
Consider $(V,\rho,K)$ an $n$-dimensional $G_{F_{w}}$-representation, where $w\nmid p$ is a finite place of $F$ and $K$ is a field containing $\qp$. We recall the Local Langlands Correspondence normalized as in \cite[Section 4]{BreuilSchneider} (see also \cite[Section 4.2]{EmertonHelm}) that attaches to $\rho$-or more precisely to the Frobenius semisimple Weil-Deligne representation $(\rho,N)$-a $\GL_{n}(F_{w})$-representation $(V_{\pi(\rho)},\pi(\rho),K)$. Denote by $\nInd_{P}^{\GL_{n}}$ the normalized parabolic induction functor of \cite[Section 1.8]{BernsteinZelevinsky} and assume first that $K$ is a finite extension of $\qp$.

We consider first the case where $(\rho,N)$ is absolutely irreducible. Then the unitary Local Langlands Correspondence (after a choice of a square root of the residual characteristic of $F_{w}$) attaches to $\rho$ a supercuspidal representation $\pi^{u}(\rho)$ of $\GL_{n}(F_{w})$ with coefficients in $\Qbar_{p}$. In that case, $\pi(\rho)$ is the unique $\GL_{n}(F_{w})$-representation with coefficients in $K$ such that $\pi(\rho)\tenseur_{K}\Qbar_{p}=\pi^{u}(\rho)\tenseur_{\Qbar_{p}}(\cardinal{\ }\circ\det)^{\frac{1-n}{2}}$ (this does not depend of our choice of square root).

The next case is when $n=md$ and there exists an absolutely irreducible $m$-dimensional Weil-Deligne representation $(\rho',N)$ with coefficients in $K$ such that $(\rho,N)$ is the basic indecomposable representation
\begin{equation}\nonumber
\Sp_{n,d}(\rho')=(\rho'_{0}\oplus\cdots\oplus\rho'_{d},N),
\end{equation}
where $\rho'_{i}=\rho'\tenseur(|\ |^{i}\circ\det)$ and $N$ maps isomorphically $\rho'_{i}$ onto $\rho'_{i+1}$ and is zero on $\rho'_{d}$. In that case, consider the representation $\pi^{u}(\rho')\tenseur\pi^{u}(\rho')(1)\tenseur\cdots\tenseur\pi^{u}(\rho')(d-1)$ of the group $\GL_{d}\times\cdots\times\GL_{d}$, which we view as the Levi subgroup of the upper triangular parabolic subgroup $P$ of $\GL_{n}$, and the Zelevinsky segment $\s_{n,d}(\pi^{u}(\rho'))$ attached to it, that is to say the representation
\begin{equation}\nonumber
\s_{n,d}(\pi^{u}(\rho'))=\produittenseur{i=0}{d}\left(\pi^{u}(\rho')\tenseur(|\ |^{i}\circ\det)\right)
\end{equation}
viewed as representation of $P$. Define as usual the generalized Steinberg representation $\St_{n,d}(\pi^{u}(\rho'))$ of $\GL_{n}(F_{w})$ to be the unique irreducible quotient of the normalized parabolic induction
\begin{equation}\nonumber
\nInd_{P}^{\GL_{n}}(\s_{n,d}(\pi^{u}(\rho')))=\nInd_{P}^{\GL_{n}}\produittenseur{i=0}{d}\pi^{u}(\rho')\tenseur(|\ |^{i}\circ\det).
\end{equation}
Then $\pi(\rho)$, which we also denote by $\St_{n,d}(\pi(\rho'))$, is the unique $\GL_{n}(F_{w})$-representation with coefficients in $K$ such that $\pi(\rho)\tenseur_{K}\Qbar_{p}=\St_{n,d}(\pi^{u}(\rho'))\tenseur_{\Qbar_{p}}(\cardinal{\ }\circ\det)^{\frac{1-n}{2}}$.

Finally, if $n=n_{1}+\cdots+n_{r}$ and if there exist basic indecomposable representations $\Sp_{n_{i},d_{i}}(\rho'_{i})$ such that $(\rho,N)=\sommedirecte{i=1}{r}\Sp_{n_{i},d_{i}}(\rho'_{i})$, then there exists an ordering of the $\Sp_{n_{i},d_{i}}(\rho'_{i})$ such that the list of Zelevinsky segments $(\s_{n_{i},d_{i}}(\pi(\rho'_{i})))_{1\leq i\leq r}$ satisfies the property that $\s_{n_{s},d_{s}}(\pi(\rho'_{s}))$ does not precede $\s_{n_{t},d_{t}}(\pi(\rho'_{t}))$ if $s<t$ (in the sense of \cite[Section 4]{ZelevinskyIrreducible}). We view the representation $\produittenseur{i=1}{r}\St_{n_{i},d_{i}}(\pi(\rho'_{i}))$ as a representation of the parabolic subgroup $P$ of $\GL_{n}$ with Levi component equal to $\GL_{n_{1}}\times\cdots\times\GL_{n_{r}}$. Then $\pi(\rho)$ is the unique $\GL_{n}(F_{w})$-representation with coefficients in $K$ such that $\pi(\rho)\tenseur_{K}\Qbar_{p}=\nInd_{P}^{G}\left(\produittenseur{i=1}{r}\St_{n_{i},d_{i}}(\pi^{u}(\rho'_{i}))\right)\tenseur_{\Qbar_{p}}(\cardinal{\ }\circ\det)^{\frac{1-n}{2}}$. In all of the above, the claimed unicity of $\pi(\rho)$ follows from \cite{ClozelMotifs,HenniartLanglandsBordeaux} and \cite[Lemma 4.2]{BreuilSchneider}.

Suppose now that $(\rho,N)$ has coefficients in $K$ a general field extension of $\qp$. Assume first that $(\rho,N)=\Sp_{n,d}(\rho')$, possibly with $d=1$. Then there exists a character of the Weil group $\psi:W_{F_{w}}\fleche\Kbar\croix$ such that $\rho'\tenseur\psi$ is defined over $\Qbar$ and so such that $\St_{n,d}(\pi^{u}(\rho'\tenseur\psi))$ is defined. Define $\St_{n,d}(\pi(\rho'))$ to be $\St_{n,d}(\pi^{u}(\rho'\tenseur\psi))\tenseur(\psi^{-1}\circ\det)$. Then $\St_{n,d}(\pi(\rho'))\tenseur_{\Qbar_{p}}(\cardinal{\ }\circ\det)^{\frac{1-n}{2}}$ admits a unique model over $K$ which we take as the definition of $\pi(\rho)$. If more generally $(\rho,N)$ is a direct sum $(\rho,N)=\sommedirecte{i=1}{r}\Sp_{n_{i},d_{i}}(\rho'_{i})$, then $\St_{n,d}(\pi(\rho'))$ exists and is defined over $\Qbar_{p}$ for all $i$ and $\nInd_{P}^{G}\left(\produittenseur{i=1}{r}\St_{n_{i},d_{i}}(\pi^{u}(\rho'_{i}))\right)\tenseur_{\Qbar_{p}}(\cardinal{\ }\circ\det)^{\frac{1-n}{2}}$ (with the ordering of the Zelevinsky segment satisfying the does not precede condition as above) admits a unique model over $K$ which we take as the definition of $\pi(\rho)$.
\subsubsection{A conjecture of Emerton and Helm}
Returning to our setting, for any $w\in\Sigma_{0}\backslash\Sigma$, there is a Weil-Deligne representation $(\rho(\aid)_{w},N)$ with coefficients in $\Frac(R(\aid))$ attached to the $G_{F_{w}}$-representation $V(\aid)$. Denote by $(\rho(\aid)_{w},N)^{*}$ its dual.

Let $\pi_{w}(V(\aid))$ be the admissible smooth $\G(F_{v}^{+})$-representation equal to the smooth dual of the representation $\pi((\rho(\aid)_{w},N)^{*})$ or, equivalently by the compatibility of the Local Langlands Correspondence with taking duals, the representation $\pi((\rho(\aid)_{w},N))$ (the reason we are taking double duals is that we will momentarily consider $S[\G(F_{v}^{+})]$-modules for $S$ a ring and, in that case, the dual modules are easier to handle than the most natural objects). According to \cite[Conjecture 1.4.1]{EmertonHelm}, there then exists for all $w\in\Sigma_{0}$  a (unique up to isomorphism) torsion free $\Hsmo$-module $\pi_{w}(\rho)$ with an admissible, smooth action of $\G(F_{v}^{+})$ as well as a (unique) torsion free $\Hsmo$-module $\pi_{\Sigma}(\rho)=\produittenseur{w\in\Sigma_{0}\backslash\Sigma}{}\pi_{w}(\rho)$ with an admissible, smooth action of $\produittenseur{w\in\Sigma_{0}\backslash\Sigma}{}\G(F_{v}^{+})$ satisfying the properties of \cite[Theorem 6.2.1]{EmertonHelm} (we warn the reader that our $\pi_{\Sigma}(\rho)$ would be denoted $\pi_{\Sigma_{0}\backslash\Sigma}(\rho)$ in \cite{EmertonHelm}; their choice of notation is much more logical but ours is made to conform with the conventions of the ETNC). In particular, there should exist a $\produittenseur{w\in\Sigma_{0}\backslash\Sigma}{}\G(F_{v}^{+})$-equivariant isomorphism 
\begin{equation}\label{EqPiMinimal}
\pi_{\Sigma}(\rho)\tenseur_{\Hsmo}\Frac(R(\aid))\simeq\produittenseur{w\in\Sigma_{0}\backslash\Sigma}{}\pi_{w}(V(\aid)).
\end{equation}
More generally, if $\psi:\Hsmo\fleche S$ is a characteristic zero specialization with values in a domain, the generic Local Langlands Correspondence attaches to $V_{\psi}\eqdef T_{\psi}\tenseur_{S}\Frac(S)$ a $\Frac(S)$-module $\pi_{w}(V_{\psi})$ with an admissible smooth action of $\G(F_{v}^{+})$ and there should exist an isomorphism
\begin{equation}\label{EqPiPsi}
\pi_{\Sigma}(\rho)\tenseur_{\Hsmo,\psi}\Frac(S)\simeq\pi_{\Sigma}(V_{\psi})\eqdef\produittenseur{w\in\Sigma_{0}\backslash\Sigma}{}\pi_{w}(V_{\psi})
\end{equation}
provided there exists a minimal ideal $\bid\in\Spec\Hsmo[1/p]$ such that $\rho(\bid)$ is a minimal lift of $\rho_{\psi}$ for all $w\in\Sigma_{0}\backslash\Sigma$. Here, we recall that $\rho(\bid)_{w}=\rho(\bid)|G_{F_{w}}$ admits a monodromy filtration and thus induces by specialization a filtration on $\rho_{\psi}|G_{F_{w}}$ if $\psi$ factors through $R(\bid)$. The $G_{F_{w}}$-representation $\rho(\bid)$ is a minimal lift of $\rho_{\psi}$ if the monodromy filtration on $\rho_{\psi}$ coincides with the filtration coming from the specialization of the monodromy filtration on $\rho(\bid)$. Still more generally, there should exist a torsion-free $S$-module $\pi_{w}(\rho_{\psi})$ with an admissible smooth action of $\G(F_{v}^{+})$ such that $\pi_{v}(\rho_{\psi})\tenseur_{S}\Frac(S)\simeq\pi_{v}(V_{\psi})$ and such that 
\begin{equation}\nonumber
\pi_{\Sigma}(\rho)\tenseur_{\Hsmo,\psi}S\simeq\pi_{\Sigma}(\rho_{\psi})\eqdef\produittenseur{w\in\Sigma_{0}\backslash\Sigma}{}\pi_{w}(\rho_{\psi})
\end{equation}
provided there exists a minimal ideal $\bid\in\Spec\Hsmo[1/p]$ such that $\rho(\bid)$ is a minimal lift of $\rho_{\psi}$ for all $w\in\Sigma_{0}\backslash\Sigma$.

Crucially for our purpose, note that if $\psi$ factors through both $R(\aid)$ and $R(\bid)$ and if $\rho(\bid)$ is a minimal lift of $\rho_{\psi}$ at some $w\in\Sigma_{0}\backslash\Sigma$ whereas $\rho(\aid)$ is not a minimal lift of $\rho_{\psi}$ at $w\in\Sigma_{0}\backslash\Sigma$, then the requirements \eqref{EqPiMinimal} and \eqref{EqPiPsi} entail that $\pi_{w}(\rho(\aid))\tenseur_{R(\aid),\psi}\Frac(S)$ is not isomorphic to $\pi_{w}(V_{\psi})$. Indeed, \cite[Theorem 7.6]{SahaPreprint} shows that if $\pi_{w}(\rho(\aid))\tenseur_{R(\aid),\psi}\Frac(S)\simeq\pi_{w}(V_{\psi})$, then the monodromy filtration on $V_{\psi}|G_{F_{w}}$ coincides with the specialization of the monodromy filtration on $V(\aid)$, and hence $\rho(\aid)$ is a minimal lift of $\rho_{\psi}$ at $w$. 

In addition to the representations $\pi_{w}(\rho_{\psi})$ for $w\in\Sigma_{0}\backslash\Sigma$, there should exist for \emph{any} specialization $\psi:\Hsmo\fleche S$ an admissible smooth $\G(F_{v}^{+})$-representation $\pi_{w}(\rho_{\psi})$ realizing the $p$-adic Local Langlands for $w|p$ and the formation of $\pi_{w}(\rho\tenseur-)$ should commute with arbitrary specialization (see \cite[Section 3.2.2]{EmertonICM}). We denote by $\pi_{p}(\rho)$ the tensor product $\produittenseur{w|p}{}\pi_{w}(\rho)$. Not much is known, even conjecturally, about the $\pi_{w}(\rho)$ except when $n\leq2$. As we see below, this is not (too) detrimental to the statement of our revised conjecture: the compatibility of $\pi_{p}$ with specializations, which is analogous to the apparition of $\Spec\Ocal_{F}[1/p]$ in $\Xcali$, guarantees that $\pi_{p}$ does not play any role in the behavior of fundamental lines under specializations. 
\subsection{Conjectures}
\subsubsection{Local Euler factors}
Let $\psi:\Hsmo\fleche S$ be a characteristic zero specialization with values in a discrete valuation ring factoring through $R(\aid)=\Hsmo/\aid$ for some minimal prime ideal $\aid$. By \cite[Theorem 1.5.1 and 3.3.2]{EmertonHelm}, there exists an admissible smooth $S[\GL_{m}(F_{w})]$-module $\pi(\rho_{\psi})$ and there is a surjection of ${\pi}({\rhobar})$ onto $\pi(\rho_{\psi})/\varpi_{S}\pi(\rho_{\psi})$. By \cite[Proposition 4.13]{HelmBernstein}, all such $\pi(\rho_{\psi})$ have the same mod $p$ inertial supercuspidal support $(M,\s)$ (here $M$ is a Levi subgroup and $\s$ is a supercuspidal automorphic representation of $M$ over $\kbar$).

Denote by $W(\kbar)$ the ring of Witt vectors of the algebraic closure of the residual field $k$ of $\Hsmo$. The full subcategory of smooth $W(\kbar)[\GL_{m}(F_{w})]$-modules every simple subquotient of which has mod $p$ inertial supercuspidal support given by $(M,\s)$ is a block of the category of smooth $W(\kbar)[\GL_{m}(F_{w})]$-modules. We denote this block by $\Bcal_{M,\s}$. It contains by construction all the $\pi(\rho_{\psi})$ for $\psi$ as above. Let $\Zcal(\Bcal_{M,\s})$ be the integral Bernstein center of $\Bcal_{M,\s}$ in the sense of \cite[Corollary 9.19]{HelmBernstein} and, for any field $\Kcal$, let $\Kcal[\Bcal_{M,\s}]$ be the affine coordinate ring of $\Bcal_{M,\s}$. If $\pi$ is a an irreducible, smooth $\GL_{m}(F_{w})$-representation, then there is a character $\zid(\pi)$ of $\Zcal(\Bcal_{M,\s})$ attached to $\psi$ by considering the action of $\Zcal(\Bcal_{M,\s})$ on $\pi$. 

By \cite[Proposition 3.11]{ChenevierApplication} (or more precisely, by its proof), there exists a unique determinant law
\begin{equation}\nonumber
D_{M,\s}:W(\kbar)[G_{F_{w}}]\fleche\Frac(W(\kbar))[\Bcal_{M,\s}]
\end{equation}
such that $D_{M,\s}(-)(\pi(\rho_{\psi}))$ is equal to $D_{\psi}(-)=\psi\circ D(\aid)(-)$ for all $\psi$ with values in discrete valuation rings as above and such that $\pi(\rho_{\psi})$ is irreducible (note that the equality makes sense as $D_{M,\s}(-)$ is a function on $\Bcal_{M,\s}$). According to \cite{BernsteinCentre}, $D_{M,\s}(-)(\pi(\rho_{\psi}))$ depends only on $\zid(\pi(\rho_{\psi}))$.

The following conjecture predicts that local $L$-factors can be computed using elements of $\Zcal[\Bcal_{M,\s}]$.
\begin{Conj}\label{ConjCenter}
Let $m\geq1$ be an integer. Let $\Bcal^{(m)}_{M,\s}$ be the block of the category of smooth $W(\kbar)[\GL_{m}(F_{w})]$-modules corresponding to the mod p inertial supercuspidal support type $(M,\s)$. Then the image of $1-\Fr(w)$ through $D_{M,\s}$ belongs to $\Zcal(\Bcal^{(m)}_{M,\s})$.
\end{Conj}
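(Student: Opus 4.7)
The plan is to identify $D_{M,\s}(1-\Fr(w))$ as a regular function on the Bernstein variety $\Spec\Zcal(\Bcal^{(m)}_{M,\s})$, to show that this function takes integral values at a Zariski-dense subset of closed points, and to invoke a normality property of the integral Bernstein centre to conclude integrality as a function. Throughout, I would lean on the explicit structure theorem for $\Zcal(\Bcal^{(m)}_{M,\s})$ developed in \cite{HelmBernstein}, in which this ring is described as a $W(\kbar)$-flat, reduced, normal algebra of finite type whose generic fibre is the affine coordinate ring of $\Bcal_{M,\s}$.

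First I would compute pointwise values. Let $\psi:\Hsmo\fleche S$ be a characteristic zero specialization to a discrete valuation ring $S$ such that $\pi(\rho_\psi)$ is irreducible. By the characterization of $D_{M,\s}$ recalled just before the conjecture, the value of $D_{M,\s}(1-\Fr(w))$ at the character $\zid(\pi(\rho_\psi))$ is $D_\psi(1-\Fr(w))=\det(1-\Fr(w)\,|\,V_\psi^{I_w})$. Because $T_\psi$ is a free $S$-module with continuous $G_{F_w}$-action, the submodule $T_\psi^{I_w}$ is saturated in $T_\psi$ (if $\varpi_S x$ is $I_w$-invariant then $\varpi_S(\tau x-x)=0$ for all $\tau\in I_w$, whence $\tau x=x$), hence a direct summand as $S$-module. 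The characteristic polynomial of $\Fr(w)$ on $T_\psi^{I_w}$ therefore has coefficients in $S$, and its value at $X=1$ lies in $S$.

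Next I would invoke density. Proposition \ref{PropInterpolation} yields a Zariski-dense set of automorphic specializations of $\Hsmo$, and the compatibility \eqref{EqPiPsi}, applied after stratifying $\Spec\Hsmo$ by its minimal primes $\bid$ so that $\rho(\bid)$ is a minimal lift at every $w\in\Sigma_{0}\backslash\{v|p\}$, identifies each such specialization with a character of $\Zcal(\Bcal^{(m)}_{M,\s})$. The characters $\zid(\pi(\rho_\psi))$ obtained in this way sweep out a Zariski-dense subset of $\Spec\Zcal(\Bcal^{(m)}_{M,\s})$. Together with the pointwise step above, this produces a function $f=D_{M,\s}(1-\Fr(w))\in\Frac(W(\kbar))[\Bcal_{M,\s}]$ whose value at every closed point in a Zariski-dense subset of the normal scheme $\Spec\Zcal(\Bcal^{(m)}_{M,\s})$ lies in the corresponding residue ring. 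A standard normality argument then forces $f\in\Zcal(\Bcal^{(m)}_{M,\s})$.

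The hard part will be the density step. The compatibility \eqref{EqPiPsi} between $\pi_\Sigma(\rho)\tenseur_{\Hsmo,\psi}\Frac(S)$ and $\pi_\Sigma(V_\psi)$ is a conjecture of Emerton-Helm known in general only for $\GL_2$; relying on it in full generality is therefore speculative. Even granted it, one must produce enough automorphic specializations that are simultaneously minimal lifts at every $w\in\Sigma_0\backslash\{v|p\}$ and whose local components $\pi(\rho_\psi)$ fill a dense subset of $\Spec\Zcal(\Bcal^{(m)}_{M,\s})$. This should combine automorphic density and level-raising of the flavor of \cite{ClozelHarrisTaylor}, but the precise statement needed does not appear to be in the literature. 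A subsidiary difficulty is to handle the locus where $\pi(\rho_\psi)$ degenerates, which should be controlled by restricting to the open dense locus of irreducibility and extending the regularity conclusion by normality.
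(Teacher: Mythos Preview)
The statement you are attempting to prove is labelled \emph{Conjecture} in the paper, and the paper makes no attempt to prove it. Immediately after stating it, the author remarks that it is implied by \cite[Conjecture 7.6]{HelmWhittaker}, another conjecture. There is therefore no ``paper's own proof'' to compare against; your proposal is an attempt to prove an open statement.

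Your approach has a genuine gap, which you yourself flag in the final paragraph. The Bernstein variety $\Spec\Zcal(\Bcal^{(m)}_{M,\s})$ is a purely \emph{local} object: its closed points parametrize (packets of) smooth $\GL_{m}(F_{w})$-representations with given mod $p$ inertial supercuspidal support. The specializations $\psi:\Hsmo\fleche S$ you use are \emph{global} automorphic data, and the map $\psi\mapsto\zid(\pi(\rho_{\psi}))$ sends them to points of the Bernstein variety. There is no reason to expect the image of this map to be Zariski-dense: a fixed global Hecke algebra has bounded tame level, so its automorphic points land in a very constrained locus of the local Bernstein variety. Level-raising results in the style of \cite{ClozelHarrisTaylor} do not repair this, since they again produce only automorphic points. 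Without density, the normality argument never gets started.

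There is also a structural mismatch in the integrality step. You show that $\det(1-\Fr(w)\,|\,T_{\psi}^{I_{w}})$ lies in $S$, the target of the global specialization. What you need is that the function $D_{M,\s}(1-\Fr(w))$, viewed as an element of the fraction field of $\Zcal(\Bcal^{(m)}_{M,\s})$, lies in the local ring at the point $\zid(\pi(\rho_{\psi}))$. These are related only through the (conjectural) map from the integral Bernstein centre to the global Hecke algebra predicted by \cite{EmertonHelm,HelmWhittaker}, which is precisely the circle of ideas the conjecture belongs to. In effect, your proof assumes a strong form of local-global compatibility for the integral Bernstein centre that is at least as hard as the conjecture itself.
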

In \cite[Conjecture 7.6]{HelmWhittaker}, D.Helm formulates a general conjecture linking the integral Bernstein center and framed universal deformation rings of $G_{F_{w}}$-representations which implies both \cite[Conjecture 1.4.1]{EmertonHelm} and conjecture \ref{ConjCenter}.
\begin{Prop}\label{PropCenterInduction}
Assume conjecture \ref{ConjCenter} and denote $D_{M,\s}(1-\Fr(w))$ by $\zid(\Bcal^{(m)}_{M,\s})_{w}$. Then
\begin{equation}\label{EqFactorL}
\zid(\Bcal^{(m)}_{M,\s})_{w}\cdot\pi(\rho_{\psi})=\det(1-\rho_{\psi}(\Fr(w)))\pi(\rho_{\psi})
\end{equation}
for all characteristic zero specialization $\psi:\Hsmo\fleche S$ with values in a domain.
\end{Prop}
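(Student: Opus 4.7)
The plan is to first verify the identity (\ref{EqFactorL}) for specializations where both sides are transparently computable, and then extend it by a Zariski density argument to all characteristic zero specializations with values in a domain.

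First I would treat the easy case. By Conjecture \ref{ConjCenter}, the element $\zid(\Bcal^{(m)}_{M,\s})_w = D_{M,\s}(1-\Fr(w))$ lies in the integral Bernstein center $\Zcal(\Bcal^{(m)}_{M,\s})$, so it acts on every object of the block $\Bcal^{(m)}_{M,\s}$ by a functorial endomorphism and, on an irreducible representation $\pi$, as the scalar $\zid(\pi)(\zid(\Bcal^{(m)}_{M,\s})_w)$. Take first $\psi:\Hsmo\fleche S$ with $S$ a characteristic zero discrete valuation ring and $\pi(\rho_\psi)$ irreducible. The defining property of $D_{M,\s}$ recalled immediately before the conjecture gives $D_{M,\s}(1-\Fr(w))(\pi(\rho_\psi)) = D_\psi(1-\Fr(w)) = \det(1-\rho_\psi(\Fr(w)))$, which proves (\ref{EqFactorL}) in this case.

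Next I would treat a general specialization $\psi:\Hsmo\fleche S$ with $S$ a characteristic zero domain. Both sides of (\ref{EqFactorL}) should be viewed as $S$-linear endomorphisms of the torsion free $S$-module $\pi(\rho_\psi) \simeq \pi_\Sigma(\rho)\tenseur_{\Hsmo,\psi} S$. The right-hand side is multiplication by $\psi(D(\aid)(1-\Fr(w))) = \det(1-\rho_\psi(\Fr(w))) \in S$, with $\aid$ the minimal prime ideal through which $\psi$ factors. The left-hand side is multiplication by $\psi(\alpha)$, where $\alpha \in R(\aid)$ is the image of $\zid(\Bcal^{(m)}_{M,\s})_w$ under the character of $\Zcal(\Bcal^{(m)}_{M,\s})$ attached to the $R(\aid)$-module $\pi(\rho(\aid))$. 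The proposition thus reduces to the equality $\alpha = D(\aid)(1-\Fr(w))$ in $R(\aid)$. To conclude I would invoke Zariski density: by the easy case, $\alpha$ and $D(\aid)(1-\Fr(w))$ have the same image in $S$ under every $\psi$ with $S$ a DVR and $\pi(\rho_\psi)$ irreducible, a set which contains a Zariski dense subset of $\Spec R(\aid)[1/p]$ (cut out from the automorphic locus $Z$ of Proposition \ref{PropLocal} by removing the points of reducibility); since $R(\aid)$ is an integral domain this forces $\alpha = D(\aid)(1-\Fr(w))$.

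The main obstacle I anticipate is establishing that the element $\alpha$ of the previous paragraph is well-defined, \emph{i.e.}\ the compatibility of the $\Zcal(\Bcal^{(m)}_{M,\s})$-action on $\pi_\Sigma(\rho)$ with arbitrary base change $-\tenseur_{\Hsmo,\psi} S$ even when $\pi(\rho_\psi)$ fails to be irreducible or when $S$ is not a DVR. This rests on the torsion-freeness of $\pi_\Sigma(\rho)$ as an $\Hsmo$-module (part of the Emerton--Helm framework recalled above) and on the fact that the action of a Bernstein center element on an object of $\Bcal^{(m)}_{M,\s}$ is a morphism in the category, hence is preserved by any base change functor that lands in the same block. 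Once this functorial compatibility is in hand, the density step is essentially automatic and the proof proceeds as sketched.
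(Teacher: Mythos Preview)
Your approach via Zariski density is a reasonable alternative to the paper's direct representation-theoretic argument, but the obstacle you flag in your final paragraph is more serious than you suggest, and your proposed resolution does not address it. The isomorphism $\pi(\rho_\psi)\simeq\pi_\Sigma(\rho)\tenseur_{\Hsmo,\psi}S$ that you invoke is precisely the Emerton--Helm compatibility statement, which (as recalled just above the proposition) is only asserted when some minimal prime $\bid$ through which $\psi$ factors gives a \emph{minimal lift} of $\rho_\psi$. When you fix a particular $\aid$ and the monodromy of $\rho_\psi$ is strictly smaller than that of $\rho(\aid)$---exactly the phenomenon the whole paper is built around---the map $\pi(\rho(\aid))\tenseur_{R(\aid),\psi}\Frac(S)\fleche\pi(V_\psi)$ is a proper embedding, not an isomorphism. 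Torsion-freeness of $\pi_\Sigma(\rho)$ and functoriality of the Bernstein-center action tell you that the center acts by $\psi(\alpha)$ on the \emph{specialized} module $\pi(\rho(\aid))\tenseur_{R(\aid),\psi}S$, but they do not by themselves tell you it acts by the same scalar on the a priori different module $\pi(\rho_\psi)$.

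The paper sidesteps this entirely by proving the stronger statement \eqref{EqLLCpi} for \emph{every} $\pi$ arising from the generic Local Langlands Correspondence, with no reference to any family or specialization. The argument is structural: one writes $\pi$ as a normalized parabolic induction of generalized Steinberg representations, invokes \cite[Theorem~10.3]{HelmBernstein} to factor the Bernstein-center action through the product of the centers for the Levi factors, and uses multiplicativity of $L$-factors under parabolic induction (\cite[Theorem~3.4]{GodementJacquet}) to reduce to the irreducible case, where the construction of $D_{M,\s}$ in \cite{ChenevierApplication} gives the result directly. Your density argument could be salvaged by observing that $\pi(\rho_\psi)$ and $\pi(\rho(\aid))\tenseur_{R(\aid),\psi}\Frac(S)$ share the same supercuspidal support (they arise from the same semisimple Weil representation, differing only in $N$) and hence the same Bernstein-center character; but making this precise is essentially the content of the paper's reduction, so you would not be gaining anything.
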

\begin{proof}
It is enough to prove the more general result that
\begin{equation}\label{EqLLCpi}
\zid(\Bcal^{(n)}_{M,\s})_{w}\cdot\pi=\det(1-\rho_{\pi}(\Fr(w)))\pi
\end{equation}
for all $n\geq1$, all modulo $p$ inertial supercuspidal support $M,\s$ and all $\pi\in\Bcal_{M,\s}^{(n)}$ which can be obtained by the generic Local Langlands Correspondence (here $\rho_{\pi}$ is the $G_{F_{w}}$-representation corresponding to $\pi$ through the Local Langlands Correspondence).

Suppose first that $\pi$ is not irreducible. By extending scalars to the fraction field of $S$ and then descending to $\Qbar_{p}$ as in the end of section \ref{SubBreuilSchneider}, we may and do assume in addition that $\pi$ has coefficients in $\Qbar_{p}$. As recalled in subsection \ref{SubBreuilSchneider}, there then exist integers $n_{i}$ such that $n_{1}+\cdots+n_{r}=n$, generalized Steinberg representations $\St_{n_{i},d_{i}}(\pi^{u}(\rho'_{i}))$ of $M_{i}=\GL_{n_{i}}(F_{w})$ and a parabolic subgroup $P$ with Levi component $M_{1}\times\cdots\times M_{r}$ such that
\begin{equation}\nonumber
\pi\simeq\nInd_{P}^{G}\left(\produittenseur{i=1}{r}\St_{n_{i},d_{i}}(\pi^{u}(\rho'_{i}))\right)\tenseur_{\Qbar_{p}}(\cardinal{\ }\circ\det)^{\frac{1-n}{2}}.
\end{equation}
The representation $\s$ is then a tensor product $\s=\produittenseur{i=1}{r}\s_{i}$ with $\s_{i}$ a supercuspidal representation of $M_{i}$. By \cite[Theorem 10.3]{HelmBernstein}, the action of $\Zcal(\Bcal^{(n)}_{M,\s})_{w}$ on $\pi$ then factors through $\produittenseur{i=1}{r}\Zcal(\Bcal^{(n_{i})}_{M_{i},\s_{i}})_{w}$.

As $L$-factors are multiplicative with respect to normalized parabolic induction (\cite[Theorem 3.4]{GodementJacquet}), the statement \eqref{EqLLCpi} for $\pi$ follows from its counterpart for each $\St_{n_{i},d_{i}}(\pi^{u}(\rho'_{i}))$. Hence, it is enough to treat the case of irreducible $\pi$.

If $\pi$ is irreducible, then \eqref{EqLLCpi} follows from the construction of $\zid(\Bcal^{(m)}_{M,\s})_{w}$ in \cite[Proposition 3.11]{ChenevierApplication}.
\end{proof}
One of the characteristic properties of $\pi(\rho_{\psi})$ (assuming it exists) is that $\End_{S[\GL_{m}(F_{w})]}(\pi(\rho_{\psi}))=S$. Combining this with conjecture \ref{ConjCenter} defines local automorphic $\Lcali$-factors to the automorphic representations $\pi(\rho_{\psi})$ with $\psi$ a characteristic zero specialization of $\Hsmo$.
\begin{DefEnglish}
Let $\psi:\Hsmo\fleche S$ be a characteristic zero specialization. The automorphic $\Lcali$-factor $\Lcali(\pi(\rho_{\psi}))$
\begin{equation}\nonumber
\Lcali(\pi(\rho_{\psi}))=\zid\left(\Bcal^{(m)}_{M,\s}\right)_{w}(\pi(\rho_{\psi}))\in S
\end{equation}
is the endomorphism $\zid\left(\Bcal^{(m)}_{M,\s}\right)_{w}$ of $\pi(\rho_{\psi})$.
\end{DefEnglish}
\paragraph{Remark:}The automorphic $\Lcali$-factor behaves just like the image of the determinant of the complex of $S[\GL_{m}(F_{w})]$-modules
\begin{equation}\nonumber
\left[\zid\left(\Bcal^{(m)}_{M,\s}\right)_{w}\pi{\fleche}\pi\right]
\end{equation}
in degree 0,1 through the canonical trivialization induced by the identity between $\pi$ in degree 0 and 1; if such a determinant were known to be well-defined. One way to make sense of this determinant could be as follows. By construction, $\pi$ is a co-Whittaker $S[\GL_{m}(F_{w})]$-module (in the sense of \cite[Definition 6.1]{HelmWhittaker}) so the $m$-th  Bernstein derivative $\pi^{(m)}$ of $\pi$ is defined and is an $S$-module free of rank 1. Hence, the complex $\left[\zid\left(\Bcal^{(m)}_{M,\s}\right)_{w}\pi^{(m)}{\fleche}\pi^{(m)}\right]$ is a perfect complex of $S$-modules and we could define
\begin{equation}\nonumber
\Det_{S[\GL_{m}F_{w}]}\left[\zid\left(\Bcal^{(m)}_{M,\s}\right)_{w}\pi{\fleche}\pi\right]\eqdef\Det_{S}\left[\zid\left(\Bcal^{(m)}_{M,\s}\right)_{w}\pi^{(m)}{\fleche}\pi^{(m)}\right].
\end{equation}
\subsubsection{Global conjecture}
Let $(T,\rho,\Lambda)$ be the $G_{F,\Sigma_{0}}$-representation attached to a characteristic zero specialization of $\Hsmo$. We assume that $T$ satisfies assumption \ref{HypMain} (this is for instance the case if $T$ os attached to the identity specialization) and recall that, in that case, the algebraic part
\begin{equation}\nonumber
\Xcali_{\Sigma}(T/F)\eqdef\Det^{-1}_{\Lambda}\RGamma_{\et}(\Ocal_{F}[1/\Sigma_{0}],T)\tenseur\produittenseur{v\in\Sigma_{0}\backslash\Sigma}{}\left(\Det_{\Lambda}\RGamma_{\et}(F_{v},T)\tenseur_{\Lambda}\Det^{-1}_{\Lambda}\frac{\Lambda}{\Eul_{v}(T,1)\Lambda}\right)
\end{equation}
of the $p$-adic fundamental line is well-defined. We assume \cite[Conjecture 1.4.1]{EmertonHelm} as well as conjecture \ref{ConjCenter}. We recall that 
\begin{equation}\nonumber
\pi_{\Sigma}(\rho)=\produittenseur{w\in\Sigma_{0}\backslash\Sigma}{}\pi_{w}(\rho)
\end{equation}
and write $\pi_{\Sigma}(\rho_{\psi})^{(m)}$ for the rank 1, free $\Lambda$-module
\begin{equation}\nonumber
\pi_{\Sigma}(\rho_{\psi})^{(m)}=\produittenseur{w\in\Sigma_{0}\backslash\Sigma}{}\pi_{w}(\rho)^{(m)}.
\end{equation}
\begin{DefEnglish}
The automorphic part of the $p$-adic fundamental line is the rank 1, free $\Lambda$-module
\begin{equation}\nonumber
\Ycali_{\Sigma}(T/F)=\Det_{\Lambda}T(-1)^{+}\tenseur_{\Lambda}\pi_{\Sigma}(\rho)^{(m)}.
\end{equation}
The full $p$-adic fundamental line is the rank 1, free $\Lambda$-modules
\begin{equation}\nonumber
\Deltatilde_{\Sigma}(T/F)\eqdef\Xcali_{\Sigma}(T/F)\tenseur_{\Lambda}\Ycali_{\Sigma}(T/F)^{-1}.
\end{equation}
\end{DefEnglish}
The following proposition verifies the compatibility of $\Deltatilde$ with arbitrary characteristic zero specialization of $\Hsmo$ (non only the pure ones).
\begin{Prop}\label{PropDeltaTilde}
If $\psi:\Lambda\fleche S$ is a characteristic zero specialization such that 1 is an eigenvalue of $\Fr(w)$ in its action on $T_{\psi}$ for no $w\in\Sigma_{0}\backslash\Sigma$, then there is a canonical isomorphism
\begin{equation}\nonumber
\Deltatilde_{\Sigma}(T/F)\tenseur_{\Lambda,\psi}S\isocan\Deltatilde_{\Sigma}(T\tenseur_{\Lambda,\psi}S/F).
\end{equation}
\end{Prop}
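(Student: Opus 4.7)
The plan is to decompose $\Deltatilde_{\Sigma}(T/F)$ into factors and check compatibility with $-\tenseur_{\Lambda,\psi}S$ on each factor separately. The global factor $\Det_{\Lambda}^{-1}\RGamma_{\et}(\Ocal_{F}[1/\Sigma_{0}],T)$ commutes with derived base change by the standard compatibility of determinants of perfect complexes with $-\Ltenseur_{\Lambda,\psi}S$, and $\Det_{\Lambda}T(-1)^{+}$ commutes with $-\tenseur_{\Lambda,\psi}S$ because $p$ is odd, so $T(-1)^{+}$ is a $\Lambda$-direct summand of $T(-1)$. The proof therefore reduces to showing, for each $w\in\Sigma_{0}\setminus\Sigma$, that the product of local factors
\begin{equation}\nonumber
F_{w}(T)\eqdef\Det_{\Lambda}\RGamma_{\et}(F_{w},T)\tenseur_{\Lambda}\Det_{\Lambda}^{-1}(\Lambda/\Eul_{w}(T,1)\Lambda)\tenseur_{\Lambda}\Det_{\Lambda}^{-1}(\pi_{w}(\rho)^{(m)})
\end{equation}
satisfies $F_{w}(T)\tenseur_{\Lambda,\psi}S\isocan F_{w}(T_{\psi})$.

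Fix $w\in\Sigma_{0}\setminus\Sigma$. The hypothesis on $T_{\psi}$ ensures that both $\Eul_{w}(T,1)$ and $\Eul_{w}(T_{\psi},1)$ are non-zero-divisors in their respective rings. Proposition \ref{PropBienDef} then provides a canonical trivialization of $\Det_{\Lambda}\RGamma_{\et}(F_{w},T)\tenseur\Det_{\Lambda}^{-1}(\Lambda/\Eul_{w}(T,1)\Lambda)$ by identifying the two-term complex $[T^{I_{w}}\xrightarrow{1-\Fr(w)}T^{I_{w}}]$ with $[\Lambda\xrightarrow{\Eul_{w}(T,1)}\Lambda]$ via a lifting of $1-\Fr(w)$ through a projective resolution of $T^{I_{w}}$. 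In parallel, by Conjecture \ref{ConjCenter} and Proposition \ref{PropCenterInduction}, the Bernstein center element $\zid(\Bcal^{(m)}_{M,\s})_{w}$ acts on the rank-one free $\Lambda$-module $\pi_{w}(\rho)^{(m)}$ as multiplication by $\Eul_{w}(T,1)$. Following the remark after the definition of the $\Lcali$-factor, this provides a canonical trivialization of $\Det_{\Lambda}^{-1}(\pi_{w}(\rho)^{(m)})$ coming from the identity morphism between the two copies of $\pi_{w}(\rho)^{(m)}$ in the complex $[\pi_{w}(\rho)^{(m)}\xrightarrow{\zid_{w}}\pi_{w}(\rho)^{(m)}]$. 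Combining these identifications canonically trivializes $F_{w}(T)$, and the same construction applied to $T_{\psi}$ trivializes $F_{w}(T_{\psi})$.

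The concluding step is to verify that the natural base change map intertwines the two trivializations. The algebraic trivialization of the first two factors of $F_{w}(T)$, after tensoring with $S$, identifies $(\Det_{\Lambda}\RGamma_{\et}(F_{w},T)\tenseur\Det_{\Lambda}^{-1}(\Lambda/\Eul_{w}(T,1)\Lambda))\tenseur_{\Lambda,\psi}S$ with the corresponding pair of factors for $T_{\psi}$ up to the unit $u\eqdef\psi(\Eul_{w}(T,1))/\Eul_{w}(T_{\psi},1)\in S^{\times}$, which captures the possible failure of $T^{I_{w}}\tenseur_{\Lambda,\psi}S\to T_{\psi}^{I_{w}}$ to be an isomorphism. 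The automorphic trivialization of $\Det_{\Lambda}^{-1}(\pi_{w}(\rho)^{(m)})$ acquires the same unit $u$ upon base change, because the action of $\zid_{w}$ on $\pi_{w}(\rho)^{(m)}\tenseur_{\Lambda,\psi}S$ is multiplication by $\psi(\Eul_{w}(T,1))$ whereas on $\pi_{w}(\rho_{\psi})^{(m)}$ it is multiplication by $\Eul_{w}(T_{\psi},1)$. Since $\pi_{w}(\rho)^{(m)}$ enters $F_{w}$ with the opposite sign from the algebraic factors, the two occurrences of $u$ cancel in the tensor product. The hardest step is to verify this cancellation canonically, which relies on the precise compatibility between the Emerton--Helm construction of $\pi_{w}(\rho)$ and the Bernstein-derivative interpretation of its trivialization, together with Conjecture \ref{ConjCenter} linking the Bernstein center to local Euler factors.
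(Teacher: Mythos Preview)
Your overall strategy matches the paper's: reduce to the local factors at each $w\in\Sigma_0\setminus\Sigma$, show that the algebraic side (Euler factor) and the automorphic side ($\pi_w(\rho)^{(m)}$) each fail to commute with $\psi$ by the same ratio $u=\psi(\Eul_w(T,1))/\Eul_w(T_\psi,1)$, and observe that these cancel because $\Ycali_\Sigma$ enters $\Deltatilde_\Sigma$ with the opposite sign. However, the middle paragraph contains a confusion and the final paragraph hides the genuine difficulty. The identity on the complex $[\pi_w(\rho)^{(m)}\xrightarrow{\zid_w}\pi_w(\rho)^{(m)}]$ trivializes $\Det(\pi^{(m)})\otimes\Det^{-1}(\pi^{(m)})\simeq\Lambda$, not $\Det^{-1}(\pi^{(m)})$ on its own; so $F_w(T)$ is not canonically trivialized by your construction. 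What is actually needed is not a trivialization of $F_w(T)$ but a canonical comparison map between $\pi_w(\rho)^{(m)}\otimes_{\Lambda,\psi}S$ and $\pi_w(\rho_\psi)^{(m)}$, and you never construct one: you compute the scalar $u$ via the Bernstein centre, but a scalar is only meaningful once both rank-one modules sit inside a common line. (Incidentally, $u$ lies in $\Frac(S)^\times$, not in $S^\times$ in general.)

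This is exactly the step the paper supplies. It first disposes of the pure case, where $\pi_w(\rho)\otimes S\simeq\pi_w(\rho_\psi)$ by the Emerton--Helm conjecture and simultaneously $\Xcali_\Sigma$ commutes with $\psi$ by Theorem~\ref{TheoPositif}. In the non-pure case the paper invokes Zelevinsky's theorem \cite[Theorem 7.1]{ZelevinskyIrreducible} together with \cite[Proposition 4.3.6]{EmertonHelm}: the Zelevinsky segments of $\pi(\rho)_w$ are obtained from those of $\pi(\rho_\psi)_w$ by repeatedly taking unions, which produces an explicit embedding $\pi(\rho)_w\otimes\Frac(S)\hookrightarrow\pi(\rho_\psi)_w\otimes\Frac(S)$. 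Passing to top Bernstein derivatives, this realises both $\pi_w(\rho)^{(m)}\otimes S$ and $\pi_w(\rho_\psi)^{(m)}$ as $S$-lattices in a single $\Frac(S)$-line, and \emph{then} the Bernstein-centre action computes their index as $u$. Your last sentence acknowledges that something like this is ``the hardest step'' but does not carry it out; without the Zelevinsky embedding there is no canonical map along which the cancellation of $u$ can be read.
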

\begin{proof}
Assume first that $\rho_{\psi}$ is in addition monodromic and pure at all $w\in\Sigma_{0}\backslash\Sigma$. Then there are canonical isomorphisms
\begin{equation}\nonumber
\Xcali_{\Sigma}(T/F)\tenseur_{\Lambda,\psi}S\isocan\Xcali_{\Sigma}(T\tenseur_{\Lambda,\psi}S/F),\ \left(\Det_{\Lambda}T(-1)^{+}\right)\tenseur_{\Lambda,\psi}S\left(\Det_{S}T(-1)\right)
\end{equation}
and
\begin{equation}\nonumber
\pi_{w}(\rho)\tenseur_{\Lambda,\psi}S\isocan\pi_{w}(\rho_{\psi})
\end{equation}
for all $w\in\Sigma_{0}\backslash\Sigma$. Combining them yields $\Deltatilde_{\Lambda}(T/F)\tenseur_{\Lambda,\psi}S\isocan\Deltatilde(T\tenseur_{\Lambda,\psi}S/F)$.

Now we assume that there exists a place $w\in\Sigma_{0}\backslash\Sigma$ such that $\rho_{\psi}$ is not pure. In that case, denote by $ZS(\rho)$ and $ZS(\rho_{\psi})$ the Zelevinsky segments respectively attached to $\pi(\rho)_{w}\tenseur_{\Lambda,\psi}\Frac(S)$ and to $\pi(\rho_{\psi})_{w}\tenseur_{S}\Frac(S)$. By construction, $ZS(\rho)$ can be obtained by repeatedly replacing pair of segments by their unions so that $\pi(\rho)_{w}\tenseur_{\Lambda,\psi}\Frac(S)$ embeds into $\pi(\rho_{\psi})_{w}\tenseur_{S}\Frac(S)$ by \cite[Theorem 7.1]{ZelevinskyIrreducible} and \cite[Proposition 4.3.6]{EmertonHelm}. This embedding realizes $\pi_{w}(\rho)^{(m)}\tenseur_{\Lambda,\psi}S$ and $\pi_{w}(\rho_{\psi})^{(m)}$ as sub-$S$-modules inside $\pi_{w}(\rho_{\psi})^{(m)}\tenseur_{S}\Frac(S)$. As $\zid\left(\Bcal^{(m)}_{M,\s}\right)_{w}$ acts naturally on $\pi_{w}(\rho)^{(m)}\tenseur_{\Lambda,\psi}S$ by multiplication by $\psi\left(\Lcali(\pi(\rho)_{w})\right)$ and on $\pi_{w}(\rho_{\psi})^{(m)}$ by multiplication by $\Lcali(\pi_{w}(\rho_{\psi}))$, the index of $\pi_{w}(\rho)^{(m)}\tenseur_{\Lambda,\psi}S$ inside $\pi_{w}(\rho_{\psi})^{(m)}$ is $\psi\left(\Lcali(\pi_{w}(\rho))\right)\tenseur_{S}\Lcali(\pi_{w}(\rho_{\psi}))^{-1}$. As the contribution of $w$ to $\Xcali_{\Sigma}(T)\tenseur_{\Lambda,\psi}S\tenseur\Xcali_{\Sigma}(T_{\psi})^{-1}$ is
\begin{equation}\nonumber
\frac{\psi(\Eul(T,1))}{\Eul(T_{\psi},1)}S
\end{equation}
and the Local Langlands Correspondence matches $L$-factors with Euler factors, there is a canonical identification of the contribution of $w$ to $\Deltatilde_{\Sigma}(T)\tenseur_{\Lambda,\psi}S\tenseur\Deltatilde(T_{\psi})^{-1}$ with $S$. Putting together the contributions of all the places $w\in\Sigma_{0}\backslash\Sigma$, we obtain the desired canonical isomoprhism $\Deltatilde(T)\tenseur_{\Lambda,\psi}S\isocan\Deltatilde(T_{\psi})$.
\end{proof}
Just like the algebraic part of the $p$-adic fundamental line is the determinant of a natural object and as we hinted in the introduction, it seems likely that the automorphic part of the $p$-adic fundamental line is the suitably defined determinant of a natural object. Interpreting the top Bernstein derivative as a non-commutative determinant functor in the category of $\Lambda[\GL_{m}(F_{w})]$-module, we see that (up to the term $\produittenseur{w|p}{}\pi(\rho)_{w}$) $\Ycali_{\Sigma}$ should be the determinant of the completed cohomology group $\Htildetilde^{d}(U^{p},\Ocal)_{\mgot}$ according to the local-global compatibility conjecture in the $p$-adic Langlands program.
\begin{Conj}\label{ConjRevised}
Let $\Sigma\subset\Sigma_{0}$ be a set of finite places containing $\{w|p\}$ and let $K$ be an abelian extension of $F$ which is unramified outside $\Sigma$. The $p$-adic fundamental line $\Deltatilde_{\Sigma}(T/K)$ of the $G_{F,\Sigma_{0}}$-representation $(T,\rho,\Hsmo)$ admits a basis $\tilde{\z}_{\Sigma}$ satisfying the following property.
Let $\psi:\Hsmo\fleche S$ be a characteristic zero motivic specialization attached to a motive $M$ pure of non-zero weight.
Then the canonical isomorphism
\begin{equation}\label{EqValeur}
\Deltatilde_{\Sigma}(T/K)\tenseur_{\Hsmo,\psi}S\isocan\Delta_{\Sigma}(T_{\psi}/K)
\end{equation}
obtained by composing the isomorphism of of proposition \ref{PropDeltaTilde} with an identification of $\pi_{\Sigma}(\rho_{\psi_{x}})$ with $S$ sends $\tilde{\z}_{\Sigma}(T/K)\tenseur1$ to the $p$-adic zeta element $\z_{\Sigma}(M_{K})\tenseur1$ of $M$.
\end{Conj}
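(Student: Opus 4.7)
The plan is to construct $\tilde{\z}_{\Sigma}(T/K)$ by $p$-adic interpolation of the motivic zeta elements across the Zariski-dense subset of motivic points, using the compatibility results already established. Assuming conjecture \ref{ConjTNC} for each motivic point (which is an input of conjecture \ref{ConjRevised} through its invocation of $p$-adic zeta elements $\z_{\Sigma}(M_{K})$), the Emerton--Helm conjecture \cite[Conjecture 1.4.1]{EmertonHelm}, and conjecture \ref{ConjCenter}, the construction should proceed in three main steps.

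First, I would reduce to the case $K=F$ by invoking the corestriction compatibility: statement \ref{ItCor} of conjecture \ref{ConjTNC} at each motivic point, combined with Shapiro's lemma and the natural $\Hsmo$-linear corestriction isomorphism $\Deltatilde_{\Sigma}(T/L)\tenseur_{\Hsmo[\Gal(L/F)]}\Hsmo[\Gal(K/F)]\isocan\Deltatilde_{\Sigma}(T/K)$ (which extends the analogous isomorphism for the algebraic part), propagates the existence of $\tilde{\z}_{\Sigma}(T/K)$ from $\tilde{\z}_{\Sigma}(T/F)$. Second, working at level $F$, by theorem \ref{TheoAutomorphe}, the subset $Z\subset\Spec\Hsmo[1/p]$ consisting of motivic specializations $\psi_{x}:\Hsmo\fleche\Ocal_{x}$ attached to motives $M(x)$ pure of non-zero weight is Zariski-dense. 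For each such $x$, proposition \ref{PropDeltaTilde} yields a canonical isomorphism $\Deltatilde_{\Sigma}(T/F)\tenseur_{\Hsmo,\psi_{x}}\Ocal_{x}\isocan\Delta_{\Sigma}(T_{\psi_{x}}/F)$, through which $\z_{\Sigma}(M(x))$ pulls back to a distinguished element $\tilde{\z}_{x}\in\Deltatilde_{\Sigma}(T/F)\tenseur_{\Hsmo,\psi_{x}}\Ocal_{x}$. Third, I would patch these elements along the injection
\begin{equation}\nonumber
\phi:\Hsmo\plonge\produit{x\in Z}{}\Ocal_{x}
\end{equation}
(which is injective and has closed image, exactly as in the proof of proposition \ref{PropInterpolation}) to produce a candidate element $(\tilde{\z}_{x})_{x\in Z}\in\Deltatilde_{\Sigma}(T/F)\tenseur_{\Hsmo}\produit{x\in Z}{}\Ocal_{x}$, and verify that this candidate descends to a basis $\tilde{\z}_{\Sigma}(T/F)$ of the rank 1 projective $\Hsmo$-module $\Deltatilde_{\Sigma}(T/F)$. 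The uniqueness of $\tilde{\z}_{\Sigma}$ follows from the density of $Z$ together with the fact that $\Deltatilde_{\Sigma}(T/F)$ is torsion-free (being projective over $\Hsmo$), so two candidates agreeing on all motivic fibers must be equal.

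The main obstacle is the integrality of the patched element, i.e.\ showing that the candidate $(\tilde{\z}_{x})_{x}$ lies in the $\Hsmo$-submodule $\Deltatilde_{\Sigma}(T/F)$ rather than merely in $\Deltatilde_{\Sigma}(T/F)\tenseur_{\Hsmo}\Frac(\Hsmo)$. In the patching of determinant laws used for proposition \ref{PropInterpolation}, integrality is automatic because traces of Frobenius are integral over $\Hsmo$ at all places of good reduction; here no such automatic mechanism is available. One would have to show that the $p$-adic valuations of the zeta elements $\z_{\Sigma}(M(x))$, measured against a fixed $\Hsmo$-basis of $\Deltatilde_{\Sigma}(T/F)$, remain uniformly bounded as $x$ varies over $Z$. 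This bounded-denominator statement is essentially a congruence property of special values of $L$-functions: it is the reflection, at the level of zeta elements rather than determinant laws, of the fact that $\Htildetilde^{d}(U^{p},\Ocal)_{\mgot}$ is an integral $\Hsmo$-module realizing the local-global compatibility in the $p$-adic Langlands program. Granting such an integrality bound (which seems to require a genuine input from $p$-adic Hodge theory or from the integral structure on completed cohomology), the remaining verification that the pulled-back element on each fiber indeed equals $\z_{\Sigma}(M(x))$ is tautological from the construction, and the $\Gal(K/F)$-equivariance follows from the corestriction step.
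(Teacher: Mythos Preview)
The statement you are attempting to prove is presented in the paper as a \emph{conjecture}, not a theorem: the paper offers no proof of it. The only follow-up is the remark that proposition \ref{PropDeltaTilde} establishes compatibility with characteristic zero specializations, and then theorem \ref{TheoExemple}, which proves a weaker form for $\GL_{2}$ under restrictive hypotheses by invoking external work (\cite{HeckeETNC}). So there is no ``paper's own proof'' to compare your proposal against; the author regards the existence of $\tilde{\z}_{\Sigma}$ as genuinely open in general.

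Your conditional approach is reasonable as a strategy, but it does not constitute a proof even modulo the conjectures you assume. You correctly identify the crux yourself: the patching step requires that the family $(\tilde{\z}_{x})_{x\in Z}$ descend to an element of $\Deltatilde_{\Sigma}(T/F)$ rather than merely of $\Deltatilde_{\Sigma}(T/F)\tenseur\Frac(\Hsmo)$, and this integrality is precisely the arithmetic content of the conjecture. Assuming conjecture \ref{ConjTNC} at each motivic point gives you the individual $\z_{\Sigma}(M(x))$ and pins down the $\Ocal_{x}$-lattice they generate in each fiber, but it says nothing about how these lattices glue over $\Hsmo$; that gluing is exactly what the conjectural $\tilde{\z}_{\Sigma}$ is asserting. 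Your appeal to the integral structure on completed cohomology is in the spirit of the paper's motivation for $\Ycali_{\Sigma}$, but the paper does not claim that this suffices, and the sentence ``Granting such an integrality bound\ldots'' is where your argument becomes a restatement of the conjecture rather than a proof of it.
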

We note that by proposition \ref{PropDeltaTilde}, the compatibility of the conjecture \ref{ConjRevised} with arbitrary characteristic zero specialization is known.
\subsubsection{The case of $\GL_{2}$}
As evidence for conjecture \ref{ConjRevised}, we mention the following theorem, which shows a weaker form of the conjecture often holds for motives of modular forms. We assume that $\G$ is the reductive group $\GL_{2}$ over $\Q$.
\begin{TheoEnglish}\label{TheoExemple}
Assume that $\rhobar$ satisfies the following hypotheses.
\begin{enumerate}
\item Let $p^{*}$ be $(-1)^{(p-1)/2}p$. Then $\rhobar|_{G_{\Q(\sqrt{p^{*}})}}$ is irreducible.
\item The semisimplification of $\rhobar|_{G_{\qp}}$ is not scalar.
\suspend{enumerate}
Assume moreover that $\rhobar$ satisfies at least one of the following two conditions.
\resume{enumerate}
\item There exists $\ell$ dividing exactly once $N(\rhobar)$, the representation $\rhobar|_{G_{\qp}}$ is reducible and $\det\rhobar$ is unramified outside $p$.
\item There exists $\ell$ dividing exactly once $N(\rhobar)$ and there exists an eigencuspform $f\in S_{2}(\Gamma_{0}(N))$ attached to a classical $x\in\Spec\Hs[1/p]$ with rational coefficients, zero eigenvalue at $p$ and such that $N$ is square-free.
\end{enumerate}
Then $\Deltatilde_{\Sigma}(T/\Q)$ admits a basis $\tilde{\z}_{\Sigma}(T/\Q)$ such that the isomorphism \eqref{EqValeur} sends $\tilde{\z}_{\Sigma}(T/\Q)$ to the $p$-adic zeta element $\z_{\Sigma}(M)\tenseur1$ provided there exists an eigencuspform $f\in S_{k}$ such that $L_{\Sigma}(M^{*}(1),0)=L_{\Sigma}(f,\chi,s)\neq0$ with $\chi$ a finite order character of $\Q(\zeta_{p^{\infty}})$ and $1\leq s\leq k-1$ is an integer.
\end{TheoEnglish}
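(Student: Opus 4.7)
The plan is to build $\tilde{\z}_\Sigma(T/\Q)$ as the tensor product of Kato's $\Lambda$-adic zeta element (trivializing the algebraic part $\Xcali_\Sigma(T/\Q)$) with a canonical generator of the automorphic part $\Ycali_\Sigma(T/\Q)$ extracted from completed cohomology via local-global compatibility, and then to verify that this basis specializes to the motivic $p$-adic zeta element $\z_\Sigma(M)\tenseur1$ at $\psi$ by combining Kato's explicit reciprocity law with proposition \ref{PropDeltaTilde}.

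The first step is the construction of the automorphic generator. Under hypotheses (1)--(2) on $\rhobar$, Emerton's work on local-global compatibility in the $p$-adic Langlands program for $\GL_2(\Q_p)$ applies, so the completed cohomology $\Htildetilde^1(U^p,\Ocal)_{\mgot}$ decomposes as a tensor product involving $\pi_p(\rho)$ on one factor and the Emerton--Helm modules $\pi_w(\rho)$ for $w\in\Sigma_0\backslash\Sigma$ on the other (whose existence is established in the $\GL_2$ setting in \cite{EmertonHelm}). Taking the top Bernstein derivative of each $\pi_w(\rho)$ produces a canonical generator $\eta$ of $\Ycali_\Sigma(T/\Q)$. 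The second step is the construction of the algebraic generator by interpolating Kato's zeta elements $\z_\Sigma(M(f)(s)\tenseur\chi)$ across the family $T$. Under hypothesis (3), this interpolation is provided by Ochiai's construction of a $\Lambda$-adic Beilinson--Kato element in the ordinary Hida family, combined with the Euler factor correction at $\ell$ built into definition \ref{DefFundamentalBis}; under hypothesis (4), it is constructed by combining Ribet-style level-raising at $\ell$ with Hida's control theorem and the specific weight-two form $f$ with $a_p(f)=0$ and squarefree level. Setting $\tilde{\z}_\Sigma(T/\Q)\eqdef\z^{\mathrm{Kato}}_\Sigma(T/\Q)\tenseur\eta^{-1}$ yields a basis of $\Deltatilde_\Sigma(T/\Q)$.

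To verify the interpolation at $\psi$, observe that since $\psi$ is motivic and pure of non-zero weight, $\Fr(w)$ acts with no eigenvalue $1$ on $T_\psi^{I_w}$ for any $w\in\Sigma_0\backslash\Sigma$, so proposition \ref{PropDeltaTilde} furnishes a canonical isomorphism $\Deltatilde_\Sigma(T/\Q)\tenseur_{\Hsmo,\psi}\Ocal_\psi\isocan\Delta_\Sigma(T_\psi/\Q)$. The algebraic factor specializes correctly by the defining interpolation property of $\z^{\mathrm{Kato}}_\Sigma$ together with Kato's explicit reciprocity formula computing $\per_\C\circ\per_p^{-1}(\z_\Sigma(M))=L_\Sigma(f,\chi,s)\neq0$ (this is where the non-vanishing hypothesis enters), while the automorphic factor specializes to the canonical trivialization of $\pi_\Sigma(\rho_\psi)^{(m)}$. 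Proposition \ref{PropCenterInduction} then guarantees that the local Euler factors on the algebraic side match the local $L$-factors on the automorphic side under $\psi$, ensuring that the full product specializes to $\z_\Sigma(M)\tenseur1$. The hardest step will be the integral interpolation of Kato's zeta element in the non-ordinary case (hypothesis (4)): since no Hida-theoretic machinery applies directly, one must use the weight-two form $f$ with $a_p(f)=0$ as a specific congruence point at which the zeta element can be pinned down, and then propagate the construction via the Zariski-density of classical points in $\Spec\Hsmo[1/p]$ established in proposition \ref{PropInterpolation}.
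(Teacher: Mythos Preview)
The paper's own proof of this theorem is not an argument but a citation: it notes that Emerton--Helm and Helm have announced proofs of \cite[Conjecture 1.4.1]{EmertonHelm} and of conjecture \ref{ConjCenter} for $\GL_2$, and then refers the entire construction of $\tilde{\z}_{\Sigma}(T/\Q)$, the proof that it is a basis, and the verification of \eqref{EqValeur} to the companion paper \cite{HeckeETNC} (sections 3.3 and 4.1). So there is essentially nothing to compare at the level of argument; what you have written is an attempt to sketch the content of \cite{HeckeETNC} rather than to reproduce the present paper's proof.

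Your sketch has the right architecture: the zeta element in $\Deltatilde_{\Sigma}$ is indeed built by combining an interpolated Kato--Beilinson element (which trivializes $\Xcali_{\Sigma}$) with a generator of $\Ycali_{\Sigma}$ coming from the Emerton--Helm local Langlands in families, and the matching under specialization is governed by proposition \ref{PropDeltaTilde} together with Kato's explicit reciprocity law. The non-vanishing hypothesis on $L_{\Sigma}(f,\chi,s)$ is used exactly where you say, to ensure the specialized zeta element is a basis rather than merely an element. That said, two points deserve caution. First, the coefficient ring here is $\Hsmo$, the full Hecke algebra acting on completed cohomology, not its ordinary quotient; your appeal to Ochiai's $\Lambda$-adic construction under hypothesis (3) therefore needs an extra step passing from the Hida family to the larger parameter space. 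Second, your explanation of hypothesis (4) is speculative: the role of the supersingular weight-two form with $a_p(f)=0$ and square-free level is not to serve as a ``congruence point at which the zeta element can be pinned down'' in any obvious sense, and propagating from a single point via Zariski-density does not by itself yield an integral element over $\Hsmo$. The actual mechanism in \cite{HeckeETNC} is more delicate, and you should consult that reference rather than improvise this step.
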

Contrary to the full conjecture, theorem \ref{TheoExemple} predicts the special values of a certain subset of motivic points (namely the critical motives with non-zero $L$-function).
\begin{proof}
In \cite{EmertonHelm,HelmWhittaker}, D.Helm has announced a proof of \cite[Conjecture 1.4.1]{EmertonHelm} and of conjecture \ref{ConjCenter} for $\GL_{2}$. The construction of $\tilde{\z}_{\Sigma}(T/\Q)$, the proof that it is a basis of $\Deltatilde_{\Sigma}(T/\Q)$ and the proof that it is satisfies \eqref{EqValeur} at the $\psi$ like in the theorem is the object of \cite{HeckeETNC} (see especially section 3.3 and 4.1 thereof).
\end{proof}
\bibliographystyle{plain}
\def\Dbar{\leavevmode\lower.6ex\hbox to 0pt{\hskip-.23ex \accent"16\hss}D}
  \def\cfac#1{\ifmmode\setbox7\hbox{$\accent"5E#1$}\else
  \setbox7\hbox{\accent"5E#1}\penalty 10000\relax\fi\raise 1\ht7
  \hbox{\lower1.15ex\hbox to 1\wd7{\hss\accent"13\hss}}\penalty 10000
  \hskip-1\wd7\penalty 10000\box7}
  \def\cftil#1{\ifmmode\setbox7\hbox{$\accent"5E#1$}\else
  \setbox7\hbox{\accent"5E#1}\penalty 10000\relax\fi\raise 1\ht7
  \hbox{\lower1.15ex\hbox to 1\wd7{\hss\accent"7E\hss}}\penalty 10000
  \hskip-1\wd7\penalty 10000\box7} \def\Dbar{\leavevmode\lower.6ex\hbox to
  0pt{\hskip-.23ex \accent"16\hss}D}
  \def\cfac#1{\ifmmode\setbox7\hbox{$\accent"5E#1$}\else
  \setbox7\hbox{\accent"5E#1}\penalty 10000\relax\fi\raise 1\ht7
  \hbox{\lower1.15ex\hbox to 1\wd7{\hss\accent"13\hss}}\penalty 10000
  \hskip-1\wd7\penalty 10000\box7}
  \def\cftil#1{\ifmmode\setbox7\hbox{$\accent"5E#1$}\else
  \setbox7\hbox{\accent"5E#1}\penalty 10000\relax\fi\raise 1\ht7
  \hbox{\lower1.15ex\hbox to 1\wd7{\hss\accent"7E\hss}}\penalty 10000
  \hskip-1\wd7\penalty 10000\box7} \def\Dbar{\leavevmode\lower.6ex\hbox to
  0pt{\hskip-.23ex \accent"16\hss}D}
  \def\cfac#1{\ifmmode\setbox7\hbox{$\accent"5E#1$}\else
  \setbox7\hbox{\accent"5E#1}\penalty 10000\relax\fi\raise 1\ht7
  \hbox{\lower1.15ex\hbox to 1\wd7{\hss\accent"13\hss}}\penalty 10000
  \hskip-1\wd7\penalty 10000\box7}
  \def\cftil#1{\ifmmode\setbox7\hbox{$\accent"5E#1$}\else
  \setbox7\hbox{\accent"5E#1}\penalty 10000\relax\fi\raise 1\ht7
  \hbox{\lower1.15ex\hbox to 1\wd7{\hss\accent"7E\hss}}\penalty 10000
  \hskip-1\wd7\penalty 10000\box7} \def\Dbar{\leavevmode\lower.6ex\hbox to
  0pt{\hskip-.23ex \accent"16\hss}D}
  \def\cfac#1{\ifmmode\setbox7\hbox{$\accent"5E#1$}\else
  \setbox7\hbox{\accent"5E#1}\penalty 10000\relax\fi\raise 1\ht7
  \hbox{\lower1.15ex\hbox to 1\wd7{\hss\accent"13\hss}}\penalty 10000
  \hskip-1\wd7\penalty 10000\box7}
  \def\cftil#1{\ifmmode\setbox7\hbox{$\accent"5E#1$}\else
  \setbox7\hbox{\accent"5E#1}\penalty 10000\relax\fi\raise 1\ht7
  \hbox{\lower1.15ex\hbox to 1\wd7{\hss\accent"7E\hss}}\penalty 10000
  \hskip-1\wd7\penalty 10000\box7} \def\Dbar{\leavevmode\lower.6ex\hbox to
  0pt{\hskip-.23ex \accent"16\hss}D}
  \def\cfac#1{\ifmmode\setbox7\hbox{$\accent"5E#1$}\else
  \setbox7\hbox{\accent"5E#1}\penalty 10000\relax\fi\raise 1\ht7
  \hbox{\lower1.15ex\hbox to 1\wd7{\hss\accent"13\hss}}\penalty 10000
  \hskip-1\wd7\penalty 10000\box7}
  \def\cftil#1{\ifmmode\setbox7\hbox{$\accent"5E#1$}\else
  \setbox7\hbox{\accent"5E#1}\penalty 10000\relax\fi\raise 1\ht7
  \hbox{\lower1.15ex\hbox to 1\wd7{\hss\accent"7E\hss}}\penalty 10000
  \hskip-1\wd7\penalty 10000\box7} \def\cprime{$'$}
  \def\cftil#1{\ifmmode\setbox7\hbox{$\accent"5E#1$}\else
  \setbox7\hbox{\accent"5E#1}\penalty 10000\relax\fi\raise 1\ht7
  \hbox{\lower1.15ex\hbox to 1\wd7{\hss\accent"7E\hss}}\penalty 10000
  \hskip-1\wd7\penalty 10000\box7}

\end{document}